\numberwithin{equation}{section}
\newtheorem{theorem}{Theorem}[section]
\newtheorem{corollary}[theorem]{Corollary}
\newtheorem{lemma}[theorem]{Lemma}
\newtheorem{proposition}[theorem]{Proposition}
\newtheorem{remark}[theorem]{Remark}
\newtheorem{definition}[theorem]{Definition}
\newtheorem{example}[theorem]{Example}
\newtheorem{conjecture}[theorem]{Conjecture}
\def \R {{\mathbb R}}
\def \GTNN {{Gr^{\mbox{\tiny TNN}} (k,n)}}
\def \S {{\mathcal S}_{\mathcal M}^{\mbox{\tiny TNN}}}
\title[Internal edge vectors and Talaska formula]{Internal edge vectors on plabic networks in the disk and a generalization of Talaska formula}
\author{Simonetta Abenda}
\address{Dipartimento di Matematica and Alma Mater Research Center on Applied Mathematics, Universit\`a di Bologna, Italy\\ INFN, sez. di Bologna, Italy}
\email{simonetta.abenda@unibo.it}
\author{Petr G. Grinevich}
\address {Steklov Mathematical Institute of Russian Academy of Sciences, Moscow, Russia\\
L.D.Landau Institute for Theoretical Physics, Chernogolovka, Russia\\
Lomonosov Moscow State University, Faculty of Mechanics and Mathematics, Moscow, Russia}
\email{pgg@landau.ac.ru}
\thanks{
This research has been partially supported by GNFM-INDAM and RFO University of Bologna, by the Russian Foundation for Basic Research, grant 20-01-00157. Partially this research was fulfilled during the visit of the second author (P.G.) to IHES, Université Paris-Saclay, France in November 2017.}
\begin{document}

\begin{abstract}

  Following \cite{Pos}, positroid cells $\S$ in totally non-negative Grassmannians $\GTNN$ admit parametrizations by positive weights on planar bicolored directed perfect networks in the disk. An explicit formula for elements of matrices representing the points in  $\S$ was obtained in \cite{Tal2} in terms of flows on such networks. 

  The formulas from  \cite{Pos,Tal2} are defined on the boundary edge vectors. In this paper we propose an extension of these formulas for vectors on internal edges defined as summations over paths on the given directed network gauged by the choice of a ray direction. This gauge choice does not affect the boundary edge vectors, which generate the Postnikov boundary measurement map. The systems of internal edge vectors corresponding to different choices of gauge ray directions coincide up to sign, the sign rule admits a simple explicit description. 

  We prove that the components of these edge vectors are rational in the weights with subtraction--free denominators. Moreover, these components are expressed in terms of internal edge flows; these formulas extend the original Talaska ones to the internal edges. 
 
  These vectors also solve the system of geometric relations associated to the corresponding network. These relations are full rank and  respect the total non--negativity property on the full positroid cell. 

  We also provide explicit formulas both for the transformation rules of the edge vectors with respect to the orientation, and for their transformations due to moves and reductions of networks.

\medskip \noindent {\sc{2010 MSC.}} 14M15; 05C10, 05C22.

\noindent {\sc{Keywords.}} Totally non-negative Grassmannians, positroid cells, planar bicolored networks in the disk, moves and reductions, boundary measurement map, edge vectors.
\end{abstract}

\maketitle

\tableofcontents

\section{Introduction}
Totally non--negative Grassmannians $\GTNN$ historically first appeared as a special case of the generalization to reductive Lie groups by Lusztig  \cite{Lus1,Lus2} of the classical notion of total positivity \cite{GK,GK2,Sch,Kar}. As for classical total positivity, $\GTNN$ naturally arise in relevant problems in different areas of mathematics and physics. The combinatorial objects introduced by Postnikov \cite{Pos}, see also \cite{Rie}, to characterize $\GTNN$ have been linked to the theory of cluster algebras of Fomin-Zelevinsky  \cite{FZ1,FZ2} in \cite{Sc,OPS}. The topological characterization of $\GTNN$ is provided in \cite{GKL} (see also \cite{PSW,RW}).

In particular the planar bicolored (plabic) graphs introduced in \cite{Pos} have appeared in many contexts, such as the topological classification of real forms for isolated singularities of plane curves \cite{FPS}, they are on--shell diagrams (twistor diagrams) in scattering amplitudes in $N=4$ supersymmetric Yang--Mills theory \cite{AGP1,AGP2,ADM} and have a statistical mechanical interpretation as dimer models in the disk \cite{Lam1}. Totally non-negative Grassmannians naturally appear in many other areas, including the theory of Josephson junctions \cite{BG}, statistical mechanical models such as the asymmetric exclusion process \cite{CW} and in the theory of integrable systems \cite{CK,KW1,KW2,AG1,AG2,AG3,AGPR}. In particular, a relevant application of the totally non-negative Grassmannians is connected with the study of the asymptotic geometry of real regular multisoliton solutions of the Kadomtsev-Petviashvili II equation (KP II) and their tropicalizations \cite{CK,KW1,KW2}. It is well-known that such solutions can be constructed by degenerating the finite-gap solutions, but how to do this degeneration in the class of \textbf{real regular} finite-gap solutions was only recently understood using the combinatorics of the plabic graphs, see \cite{AG1,AG2,AG3}.

The motivation to the present research comes from problems of mathematical and theoretical physics where total positivity is connected to some measurable outcome at the boundary of the graph due to real local interactions occurring at its vertices. In particular, to construct the KP II divisor on the rational $\mathtt M$-curve corresponding to a given network, one has to know the wave function at all internal edges. In \cite{AG3} this problem was solved only for the canonically oriented Le--network. Using the results of the present paper, it is solved in \cite{AG6} for any plabic network. 

Moreover, the edge vectors introduced in the present paper fulfill a linear system of equations at the vertices. The latter system may be described in terms of the amalgamation of cluster varieties originally introduced by Fock and Goncharov in \cite{FG1}, which has relevant applications in cluster algebras and relativistic quantum field theory \cite{AGP1,AGP2,Kap,MS}. In connection to relevant open problems in theoretical physics, Lam (see \cite{Lam2}, Section 14) has proposed to use spaces of relations on planar bipartite graphs to represent amalgamation in totally non--negative Grassmannians and to characterize their maximal rank and total non-negativity properties in terms of admissible edge signatures on the final planar graph. Using the results of the present paper, in \cite{AG7} we prove that the geometric signature associated to the relations fulfilled by the edge vectors solves this problem of Lam. Finally, in \cite{A3}, geometric signatures are proven to fulfil  Speyer's variant \cite{Sp} of the classical Kasteleyn theorem \cite{Kas1} in the case of reduced bipartite plabic graphs.

\smallskip

\textbf{Main results} On a given plabic network $\mathcal N$ in the disk representing a point in $\S$, we introduce a system of edge vectors at all internal edges. The $j$--th edge vector component on $e$ is defined as a summation over all directed paths from $e$ to the boundary sink $b_j$. The absolute value of the contribution of one such path is the product of the edge weights counted with their multiplicities, whereas its sign depends on the sum of two indices: the generalized winding index of the path with respect to a chosen gauge direction $\mathfrak l$, and  the number of intersections of the path with the gauge rays starting at the boundary sources. We show that for boundary edges this sign coincides with the sum of the topological winding used in  \cite{Pos} plus the number of boundary sources passed by a directed path from boundary to boundary, therefore these edge components coincide with the boundary measurement matrix entries in  \cite{Pos}. The idea of fixing a ray direction to measure locally the winding first appeared in \cite{GSV}. 

Given a perfectly oriented graph with fixed gauge ray direction, we show that:
\begin{enumerate}
\item For any choice of positive edge weights, these edge vectors solve a full rank system of relations (Theorem \ref{theo:consist});
\item The vector components at internal edges are rational in the weights with subtraction--free denominators and are explicitly computed using conservative and edge flows, thus extending the results by Talaska in \cite{Tal2} to the interior of the graph (Theorem \ref{theo:null});
\item The solution of such system at the boundary sources provides the boundary measurement matrix associated to such network by Postnikov \cite{Pos} (see Corollary \ref{cor:bound_source}).
\end{enumerate}
Finally, we explicitly characterize how edge vectors change with respect to changes of orientation, of gauge ray direction (Section \ref{sec:vector_changes} and Appendix \ref{app:orient}), and with respect to Postnikov moves and reductions (Section \ref{sec:moves_reduc}).

In particular, if the graph is reduced in Postnikov sense, the vector components at all internal edges are subtraction--free rational expressions in the weights, therefore they satisfy the stronger condition settled for the boundary measurement map in \cite{Pos} (see Theorem \ref{thm:null_acyclic}). On the contrary, null edge vectors may appear in reducible networks even if there exist paths from the given edge to the boundary sinks (see Example \ref{example:null}). In such case, we conjecture that it is possible to obtain non--zero edge vectors using the extra gauge freedom of weights on reducible networks. 

\smallskip

\textbf{Final remarks and open problems}
In our construction we use $n$--row vectors and perfectly oriented trivalent plabic networks because this representation is suitable for the mathematical formulation of several problems connected to total non--negativity \cite{AG1, AG3, AG6,AGP1,AGP2, CK, KW1, KW2}.
We remark that the valency condition is by no means restrictive and that the formulation of the same problem in terms of $k$--column vectors is straightforward and amounts to exchange relations at white and black vertices. 

In \cite{GSV1} it is proven that the boundary measurement map possesses a natural Poisson-Lie structure, compatible with the natural cluster algebra structure on such Grassmannians. An interesting open question is how to use such Poisson--Lie structure in association with our geometric approach. Another open problem is to extend our construction to planar graphs on orientable surfaces with boundaries. Indeed, an extension of Talaska formula for such graphs was conjectured in \cite{FGPW} and proven in \cite{Mach}. Moreover, in such case it is necessary to modify the present construction using the procedure established in \cite{GSV2} to extend the boundary measurement map to planar networks in the annulus.

We plan to pursue such detailed construction in a different paper with the aim of generalizing the construction of KP-II divisors for other classes of soliton solutions and compare it with the so--called top-down approach for non-planar diagrams from gluing legs which plays a relevant role in the computation of scattering amplitudes of field theoretical models \cite{AGP1, AGP2, BFGW}. 

\section{Plabic networks and totally non--negative Grassmannians}\label{sec:plabic_graphs}

In this Section we recall some basic definitions on totally non--negative Grassmannians and define the class of graphs $\mathcal G$ representing a given positroid cell which we use throughout the text. 
We use the following notations throughout the paper:
\begin{enumerate}
\item $k$ and $n$ are positive integers such that $k<n$;
\item  For $s\in {\mathbb N}$  $[s] =\{ 1,2,\dots, s\}$; if $s,j \in {\mathbb N}$, $s<j$, then
$[s,j] =\{ s, s+1, s+2,\dots, j-1,j\}$;
\end{enumerate}

\begin{definition}\textbf{Totally non-negative Grassmannian \cite{Pos}.}
Let $Mat^{\mbox{\tiny TNN}}_{k,n}$ denote the set of real $k\times n$ matrices of maximal rank $k$ with non--negative maximal minors $\Delta_I (A)$. Let $GL_k^+$ be the group of $k\times k$ matrices with positive determinants. We define a totally non-negative Grassmannian as 
\[
\GTNN = GL_k^+ \backslash Mat^{\mbox{\tiny TNN}}_{k,n}.
\]
\end{definition}
In the theory of totally non-negative Grassmannians an important role is played by the positroid stratification. Each cell in this stratification is defined as the intersection of a Gelfand-Serganova stratum \cite{GS,GGMS} with the totally non-negative part of the Grassmannian. More precisely:
\begin{definition}\textbf{Positroid stratification \cite{Pos}.} Let $\mathcal M$ be a matroid i.e. a collection of $k$-element ordered subsets $I$ in $[n]$, satisfying the exchange axiom (see, for example \cite{GS,GGMS}). Then the positroid cell $\S$ is defined as
$$
\S=\{[A]\in \GTNN\ | \ \Delta_{I}(A) >0 \ \mbox{if}\ I\in{\mathcal M} \ \mbox{and} \  \Delta_{I}(A) = 0 \ \mbox{if} \ I\not\in{\mathcal M}  \}.
$$
A positroid cell is irreducible if, for any $j\in [n]$, there exist $I, J\in \mathcal M$ such that $j\in I$ and $j\not\in J$.
\end{definition}
The combinatorial classification of all non-empty positroid cells and their rational parametrizations were obtained in \cite{Pos}, \cite{Tal2}. In our construction we use the classification of positroid cells via directed planar networks in the disk in \cite{Pos}. More precisely, we use the following class of graphs ${\mathcal G}$ introduced by Postnikov \cite{Pos}:
\begin{definition}\label{def:graph} \textbf{Planar bicolored directed trivalent perfect graphs in the disk (plabic graphs).} A graph ${\mathcal G}$ is called plabic if:
\begin{enumerate}
\item  ${\mathcal G}$ is planar, directed and lies inside a disk. Moreover ${\mathcal G}$ is connected in the sense it does not possess components isolated from the boundary;
\item It has finitely many vertices and edges;
\item It has $n$ boundary vertices on the boundary of the disk labeled $b_1,\cdots,b_n$ clockwise. Each boundary vertex has degree 1. We call a boundary vertex $b_i$ a source (respectively sink) if its edge is outgoing (respectively incoming);
\item The remaining vertices are called internal and are located strictly inside the disk. They are either bivalent or trivalent;
\item ${\mathcal G}$ is a perfect graph, that is each internal vertex in  ${\mathcal G}$ is incident to exactly one incoming edge or to one outgoing edge. In the first case the vertex is colored white, in the second case black. Bivalent vertices are assigned either white or black color.
\end{enumerate}
A face of the graph is called \textbf{internal} if it does not contain boundary vertices, otherwise is called external. The external face containing the boundary vertices $b_n$, $b_1$ in clockwise order is called infinite, all other faces are called finite.

Moreover, to simplify the overall construction we further assume that the boundary vertices $b_j$, $j\in [n]$ lie on a common interval in the boundary of the disk.
\end{definition}

\begin{remark}
\begin{enumerate}
\item The trivalency assumption is not restrictive, since any perfect plabic graph can be transformed into a trivalent
  one.
\item The assumption that the boundary vertices $b_j$, $j\in [n]$ lie on a common interval in the boundary of the disk considerably simplifies the use of gauge ray directions to assign winding numbers to walks starting at internal edges and to count the number of boundary source points passed by such walks.  
\end{enumerate}
\end{remark}
In Figure \ref{fig:Rules0} we present an example of a plabic graph satisfying Definition~\ref{def:graph} and representing a 10-dimensional positroid cell in $Gr^{\mbox{\tiny TNN}}(4,9)$. 

The class of perfect orientations of the plabic graph ${\mathcal G}$ are those which are compatible with the coloring of the vertices. The graph is of type $(k,n)$ if it has $n$ boundary vertices and $k$ of them are boundary sources. Any choice of perfect orientation preserves the type of ${\mathcal G}$. To any perfect orientation $\mathcal O$ of ${\mathcal G}$ one assigns the base $I_{\mathcal O}\subset [n]$ of the $k$-element source set for $\mathcal O$. Following \cite{Pos} the matroid of ${\mathcal G}$ is the set of $k$-subsets  $I_{\mathcal O}$ for all perfect orientations:
$$
\mathcal M_{\mathcal G}:=\{I_{\mathcal O}|{\mathcal O}\ \mbox{is a perfect orientation of}\ \mathcal G \}.
$$
In \cite{Pos} it is proven that $\mathcal M_{\mathcal G}$ is a totally non-negative matroid $\mathcal S^{\mbox{\tiny TNN}}_{\mathcal M_{\mathcal G}}\subset \GTNN$. The following statements are straightforward adaptations of more general statements of \cite{Pos} to the case of  plabic graphs:
\begin{theorem}
A plabic graph $\mathcal G$ can be transformed into a plabic graph $\mathcal G'$ via a finite sequence of Postnikov moves and reductions if and only if $\mathcal M_{\mathcal G}=\mathcal M_{\mathcal G'}$.
\end{theorem}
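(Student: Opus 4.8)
The statement is a reformulation, within our restricted graph class, of Postnikov's classification of plabic graphs by their decorated permutations. The plan is to reduce both implications to results in \cite{Pos}, and then to check that the extra conditions defining $\mathcal G$ (trivalency, and the common-interval placement of the boundary vertices) do not obstruct move--reduction connectivity.

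\textbf{Forward direction.} First I would verify that each Postnikov move and each reduction leaves the matroid $\mathcal M_{\mathcal G}$ unchanged. It suffices to recall from \cite{Pos} that these local transformations preserve the set of perfect orientations up to the induced source set: a move or reduction sends a perfect orientation $\mathcal O$ of $\mathcal G$ to a perfect orientation $\mathcal O'$ of the transformed graph with $I_{\mathcal O}=I_{\mathcal O'}$, and the correspondence is a bijection on orientations. Hence the collections $\{I_{\mathcal O}\}$ coincide and $\mathcal M_{\mathcal G}=\mathcal M_{\mathcal G'}$. Equivalently, one cites that these transformations fix the decorated permutation $\pi_{\mathcal G}$ and that $\mathcal M_{\mathcal G}$ is exactly the positroid associated to $\pi_{\mathcal G}$.

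\textbf{Backward direction.} Suppose $\mathcal M_{\mathcal G}=\mathcal M_{\mathcal G'}$. Applying reductions finitely many times, I would reduce $\mathcal G$ and $\mathcal G'$ to reduced plabic graphs $\mathcal G_r$ and $\mathcal G'_r$; by the forward direction these still carry the matroid $\mathcal M_{\mathcal G}$, hence the same decorated permutation. By Postnikov's classification of reduced plabic graphs, two reduced graphs with equal decorated permutation are connected by a finite sequence of moves. Composing the reduction from $\mathcal G$ to $\mathcal G_r$, the move sequence from $\mathcal G_r$ to $\mathcal G'_r$, and the inverse of the reduction from $\mathcal G'$ to $\mathcal G'_r$ (reductions are reversible within the move--reduction calculus) yields the desired finite sequence from $\mathcal G$ to $\mathcal G'$.

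\textbf{Staying in the class $\mathcal G$.} The remaining point is that all intermediate graphs may be kept in our restricted class, or that leaving it temporarily does no harm. Trivalency is handled by the Remark following Definition~\ref{def:graph}: a vertex of valence greater than three is split into trivalent vertices joined by short unicolored edges, realized by admissible transformations whose inverses are contractions/reductions; since the split preserves the perfect structure it preserves $\mathcal M_{\mathcal G}$. The common-interval placement of the boundary vertices is a purely topological normalization, achieved by an ambient isotopy of the disk fixing the combinatorial type, and therefore affects neither the matroid nor the move--reduction class. I expect the main obstacle to be precisely this bookkeeping: one must check that after each move or reduction in Postnikov's sequence both trivalency and the normalized boundary position can be restored by further admissible transformations, so that the whole chain is realized within $\mathcal G$ without ever changing the matroid.
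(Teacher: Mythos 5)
Your proposal is correct and is essentially the paper's own approach: the paper gives no independent proof of this theorem, presenting it as a ``straightforward adaptation of more general statements of \cite{Pos}'' to its class of trivalent perfect graphs, and your argument---invariance of $\mathcal M_{\mathcal G}$ under moves and reductions, passage to reduced graphs, Postnikov's move-classification of reduced graphs by decorated permutations, and reversibility of reductions within the move--reduction equivalence---is precisely that adaptation, including the bookkeeping on trivalency which mirrors the paper's own remark after Definition~\ref{def:graph}. The only imprecision is your claim that a reduction induces a bijection on perfect orientations: for the parallel edge reduction (R1) the correspondence is two-to-one on the cyclically oriented configurations, but since the argument only needs equality of the collections $\{I_{\mathcal O}\}$ of source sets, this does not affect correctness.
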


A graph  $\mathcal G$ is reduced if there is no other graph in its move reduction equivalence class which can be obtained from $\mathcal G$ applying a sequence of transformations containing at least one reduction. Each positroid cell $\S$ is represented by at least one reduced graph, the so called Le--graph, associated to the Le--diagram representing $\S$ and it is possible to assign weights to such graphs in order to obtain a global parametrization of $\S$ \cite{Pos}.

\begin{remark}
If a positroid cell is irreducible, then the plabic graphs representing it do not possess isolated boundary vertices.
\end{remark}

\begin{proposition}\cite{Pos}
Each Le-graph is reduced and may be transformed into a reduced plabic Le-graph. 
If $\mathcal G$ is a reduced  plabic graph, then the dimension of  $\mathcal S^{\mbox{\tiny TNN}}_{\mathcal M_{\mathcal G}}$ is equal to the number of faces of $\mathcal G$ minus 1.
\end{proposition}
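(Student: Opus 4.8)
The plan is to deduce both assertions from three ingredients: a purely topological identity relating the face count to edges and internal vertices, the gauge count of the weight parametrization, and the behaviour of the face count under Postnikov moves and reductions. Throughout I would regard $\mathcal G$ together with the $n$ boundary arcs of the circle as a CW-decomposition of the closed disk, whose Euler characteristic is $1$. Writing $V_{int}$ for the internal vertices, $n$ for the boundary vertices, $E$ for the edges of $\mathcal G$ and $n$ for the boundary arcs, and $F$ for the number of faces, Euler's formula gives
\[
(V_{int}+n)-(E+n)+F = 1, \qquad\text{i.e.}\qquad F-1 = E - V_{int}.
\]
Thus the asserted equality $\dim \mathcal S^{\mbox{\tiny TNN}}_{\mathcal M_{\mathcal G}} = F-1$ is equivalent to $\dim \mathcal S^{\mbox{\tiny TNN}}_{\mathcal M_{\mathcal G}} = E - V_{int}$, i.e. to counting independent edge weights modulo gauge. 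Note that a trivalent resolution of a higher-valent vertex raises both $E$ and $V_{int}$ by one and leaves $F$ unchanged, so this identity is stable under passage to the trivalent form required in Definition~\ref{def:graph}.

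Next I would count parameters. Postnikov's boundary measurement map is assembled from positive edge weights and depends on them only modulo the gauge action of $({\mathbb R}_{>0})^{V_{int}}$ rescaling the edges at each internal vertex. Since $\mathcal G$ is connected to the boundary this action is free, so the gauge-reduced parameter space has dimension exactly $E-V_{int}$; equivalently it is coordinatized by $F-1$ face weights subject to the single multiplicative relation that their product equals $1$. As the boundary measurement map is onto $\mathcal S^{\mbox{\tiny TNN}}_{\mathcal M_{\mathcal G}}$ by \cite{Pos}, this already yields the inequality $\dim \mathcal S^{\mbox{\tiny TNN}}_{\mathcal M_{\mathcal G}} \le E-V_{int} = F-1$, with equality precisely when the parametrization is generically injective.

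The heart of the argument, and the step I expect to be the main obstacle, is this injectivity for reduced graphs. I would first settle the distinguished case of the Le-graph $G_D$ built from a Le-diagram $D$: its $+$-entries serve as free positive coordinates and the induced network map is a bijection onto the cell, whence $\dim \mathcal S^{\mbox{\tiny TNN}}_{\mathcal M} = \#\{+\text{'s in }D\}$; this is Postnikov's parametrization \cite{Pos}, which I would invoke rather than reprove. A direct face count gives $F(G_D)=\#\{+\text{'s in }D\}+1$, so $G_D$ realizes $\dim = F-1$, and $G_D$ is reduced by inspection, since none of the reduction configurations (parallel edges or bubbles, degree-one internal or bivalent dead ends, contractible monochromatic edges) can occur in a graph produced from a Le-diagram. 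The delicate point is that injectivity genuinely requires reducedness: on a non-reduced network the $E-V_{int}$ parameters overcount, the fibres of the boundary measurement map become positive-dimensional, and the inequality $E-V_{int}>\dim$ is strict.

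Finally I would propagate the equality from $G_D$ to every reduced graph of the class, thereby proving the dimension formula in full and the first assertion simultaneously. By the classification quoted above, graphs with the same matroid are related by moves and reductions, and reduced graphs representing the same cell are related by moves alone \cite{Pos}; it therefore suffices to verify that the square move and the (un)contraction of bivalent vertices preserve $F$, while each reduction strictly decreases $F$ (removing a bubble, a dipole or a monochromatic edge deletes exactly one face). Hence $F$ is constant and equal to $\dim+1$ on all reduced representatives, which gives $\dim \mathcal S^{\mbox{\tiny TNN}}_{\mathcal M_{\mathcal G}} = F-1$ for every reduced $\mathcal G$, and shows that any Le-graph, attaining the minimal face count, admits no reduction and so is reduced.
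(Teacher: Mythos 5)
A preliminary remark: the paper does not prove this Proposition --- it is quoted from \cite{Pos} --- so your proposal can only be measured against Postnikov's own argument, which it essentially reconstructs: the upper bound $\dim \mathcal S^{\mbox{\tiny TNN}}_{\mathcal M_{\mathcal G}}\le F-1$ from Euler's formula, the free vertex--gauge action and surjectivity of the boundary measurement map; equality for the Le-graph from the Le-tableau parametrization; and propagation to every reduced graph because, by the paper's (non-local) definition of reducedness combined with the quoted equivalence theorem, the Le-graph with the same matroid is reachable from a reduced $\mathcal G$ by moves alone, and moves preserve $F$. That architecture is sound, and your closing observation --- a Le-graph attains the minimal face count in its equivalence class, hence no sequence of transformations starting from it can ever contain a reduction --- is the correct route to the first assertion.

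Two of your supporting claims are, however, false as stated, and both need the same patch. First, it is not true that ``each reduction deletes exactly one face'': the dipole reduction (R2) removes an isolated segment lying inside a single face, and the complement keeps the same number of connected components, so $F$ is unchanged; the leaf reduction (R3) likewise leaves $F$ unchanged in the degenerate case $f_1=f_2$, where it creates components isolated from the boundary. Worse, once isolated components are present, both ingredients of your upper bound fail: a face containing such a component is not a $2$-cell, so $F-1=E-V_{int}$ breaks down, and the vertex rescaling action is no longer free (rescale all vertices of an isolated component simultaneously). The repair is to observe that these degenerate reductions never arise in the classes at hand: graphs of Definition 2.3 are boundary-connected with only bivalent and trivalent internal vertices, and these properties are preserved by the moves (M1)--(M3) and by (R1); hence along any admissible sequence the only reduction that can occur is the parallel edge reduction (R1), which does delete exactly one face. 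Second, declaring the Le-graph reduced ``by inspection'' is not an argument under the definition used here: reducedness requires that no reduction become applicable after an \emph{arbitrary} sequence of moves, so the absence of visible reduction configurations in the graph as given proves nothing; moreover your list of forbidden configurations includes contractible monochromatic edges, whose contraction is the move (M2), not a reduction, and which Le-graphs do contain. This lapse is harmless only because your final face-count argument makes the ``by inspection'' claim superfluous; it should simply be deleted, leaving reducedness of the Le-graph to follow from minimality of its face count.
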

The plabic graph in Figure \ref{fig:Rules0} is a plabic Le-graph.

\begin{lemma}\textbf{Relations between vertices, edges, faces}
Let $t_W, t_B, d_W$ and $d_B$ respectively be the number of trivalent white, trivalent black, bivalent white and bivalent black internal vertices of ${\mathcal G}$. Let $n_I$ be the number of internal edges ({\sl i.e.} edges not connected to a boundary vertex) of ${\mathcal G}$. By Euler formula we have $g = n_I +n -(t_W + t_B+d_W+d_B)$. Moreover,
the following identities hold
$3(t_W+t_B)+2(d_W+d_B) = 2n_I +n$, $2t_B+ t_W+d_W+d_B =n_I+k$.
Therefore 
\begin{equation}\label{eq:vertex_type}
t_W = g-k, \qquad t_B = g-n+k, \qquad d_W+d_B= n_I+ 2n - 3g.
\end{equation}
\end{lemma}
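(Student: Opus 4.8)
The plan is to read this as a purely enumerative statement resting on three independent counts --- Euler's formula together with two handshake-type identities --- after which the three closed formulas in \eqref{eq:vertex_type} drop out by elementary linear algebra. I treat $g$ as the number of faces of $\mathcal G$ minus one, in agreement with the dimension formula of the Proposition above. To obtain the first relation I complete $\mathcal G$ to a connected planar graph $\widehat{\mathcal G}$ by adjoining the boundary circle of the disk, which the $n$ boundary vertices $b_1,\dots,b_n$ cut into $n$ arcs, regarded as $n$ new edges. Because $\mathcal G$ has no component isolated from the boundary, $\widehat{\mathcal G}$ is connected and Euler's formula $V-E+F=2$ applies with $V=(t_W+t_B+d_W+d_B)+n$ and $E=n_I+2n$, the three contributions to $E$ being the $n_I$ internal edges, the $n$ boundary edges, and the $n$ arcs. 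The faces of $\widehat{\mathcal G}$ are the faces of $\mathcal G$ inside the disk together with the single exterior region outside the circle, so the number of faces of $\mathcal G$ equals $F-1$; substituting and using the definition of $g$ gives $g=n_I+n-(t_W+t_B+d_W+d_B)$.

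I would then prove the two identities by double counting edge-endpoints. For the first, summing the degrees of the internal vertices gives $3(t_W+t_B)+2(d_W+d_B)$, since trivalent vertices have degree three and bivalent ones degree two; counting the same endpoints edge-by-edge, each internal edge has both ends internal and is counted twice while each boundary edge has a single internal end and is counted once, yielding $2n_I+n$. For the second identity I use perfect orientation: by Definition~\ref{def:graph} each white vertex carries exactly one incoming edge and each black vertex exactly one outgoing edge, so a trivalent black vertex has two incoming edges; hence the number of incoming edge-heads located at internal vertices is $t_W+2t_B+d_W+d_B$. Counting these heads edge-by-edge, every internal edge contributes its unique head (internal), every boundary edge leaving one of the $k$ sources contributes its head to an internal vertex, and every boundary edge pointing to one of the $n-k$ sinks has its head on the boundary and contributes nothing, for a total of $n_I+k$.

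Finally I would solve the resulting linear system. Setting $x=t_W$, $y=t_B$, $z=d_W+d_B$, the three equations read $x+y+z=n_I+n-g$, $3x+3y+2z=2n_I+n$ and $x+2y+z=n_I+k$. Subtracting the first equation from the third isolates $y=g-n+k$; subtracting twice the first from the second gives $x+y=2g-n$ and hence $x=g-k$; back-substitution into the first then yields $z=n_I+2n-3g$. These are exactly the values of $t_W$, $t_B$ and $d_W+d_B$ asserted in \eqref{eq:vertex_type}.

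I expect the only delicate point to be the face count underlying Euler's formula: one must fix a convention for the disk boundary --- here, adjoining the $n$ arcs and discarding the unique exterior region, so that the ``infinite'' external face of $\mathcal G$ is retained among the faces of $\mathcal G$ --- so that the combinatorial face number matches the one used in the Proposition. One should also check that, under the standing hypotheses that boundary vertices have degree one and internal vertices are bivalent or trivalent, every boundary edge joins a boundary vertex to an internal vertex, guaranteeing exactly $n$ boundary edges and $n$ arcs. Once these conventions are pinned down, the two identities and the final linear algebra are entirely routine.
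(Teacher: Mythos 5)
Your proof is correct and follows essentially the same route the paper itself sketches inside the statement of the Lemma: Euler's formula for the graph completed by the boundary arcs, the degree-count identity $3(t_W+t_B)+2(d_W+d_B)=2n_I+n$, the perfect-orientation count $t_W+2t_B+d_W+d_B=n_I+k$ of incoming edges at internal vertices, and elementary elimination to obtain \eqref{eq:vertex_type}. The paper supplies no further detail, and your fleshed-out version (including fixing the convention that $g+1$ is the number of faces of $\mathcal G$, consistent with the proof of Theorem \ref{theo:consist}) is exactly the intended argument.
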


\section{Systems of edge vectors on plabic networks}\label{sec:def_edge_vectors}
For any given $[A]\in \mathcal S^{\mbox{\tiny TNN}}_{\mathcal M_{\mathcal G}}$, there exists $\mathcal N$ a network representing $[A]$ with plabic graph $\mathcal G$ for some choice of positive edge weights $w_e$ \cite{Pos}. 

In \cite{Pos}, for any given oriented planar network in the disk it is defined the formal boundary measurement map 
\[
M_{ij} := \sum\limits_{P \, : \, b_i\mapsto b_j} (-1)^{\mbox{\scriptsize
 wind}(P)} w(P),
\] 
where the sum is over all directed walks from the source $b_i$ to the sink $b_j$, $w(P)$ is the product of the edge weights of $P$ and $\mbox{wind}(P)$ is its topological winding index. These formal power series sum up to subtraction--free rational expressions in the weights \cite{Pos} and explicit expressions in function of flows and conservative flows in the network are obtained in \cite{Tal2}. Let $I$ be the base inducing the orientation of $\mathcal N$ used in the computation of the boundary measurement map. Then the point $Meas(\mathcal N)\in Gr(k,n)$ is represented by the boundary measurement matrix $A$ such that:
\begin{itemize}
\item The submatrix $A_I$ in the column set $I$ is the identity matrix;
\item The remaining entries $A^r_j = (-1)^{\sigma(i_r,j)} M_{ij}$, $r\in [k]$, $j\in \bar I$, where $\sigma(i_r,j)$ is the number of elements of $I$ strictly between $i_r$ and $j$.
\end{itemize}
In the following we extend this measurement to the edges of plabic networks in such a way that, if $e_r$ is the edge at the boundary source $b_{i_r}$, then the vector $E_{e_r} = A[r]- E_{i_r}$, with $E_{j}$ the $j$--th vector of the canonical basis. At this aim we introduce gauge rays both to measure the local winding between consecutive edges in the path and to count the number of boundary sources passed by a path from an internal edge to a boundary sink vertex using the number of its intersections with gauge rays starting at the boundary sources. 
In \cite{GSV}, gauge rays were introduced to compute the winding number of a path joining boundary vertices. Here we use it also to generalize the index $\sigma(i_r,j)$ when the path starts at an internal edge $e$.

\begin{figure}
  \centering{\includegraphics[width=0.55\textwidth]{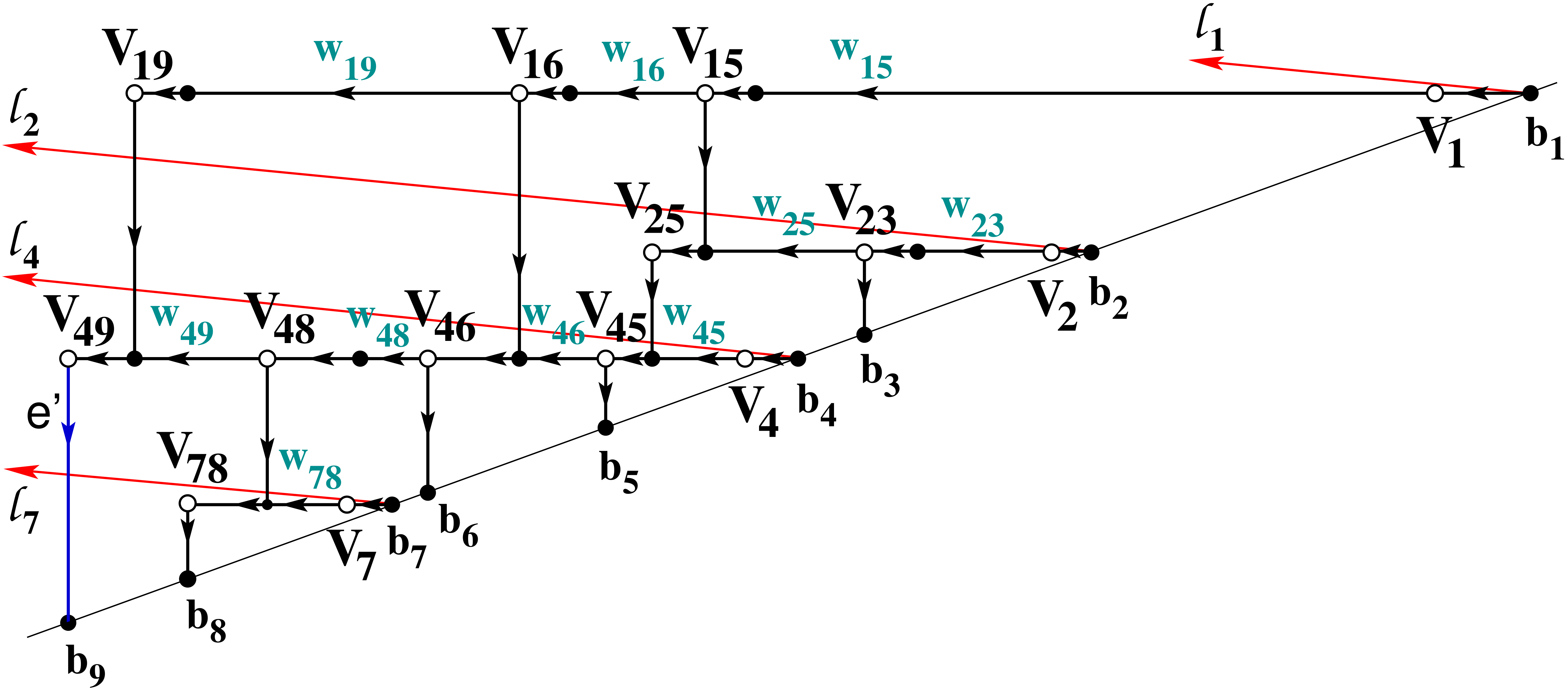}
      \caption{\small{\sl The rays starting at the boundary sources for a given orientation of the 
          network uniquely fix the edge vectors.
					}}\label{fig:Rules0}}
\end{figure}

\begin{definition}\label{def:gauge_ray}\textbf{The gauge ray direction $\mathfrak{l}$.}
A gauge ray direction is an oriented direction ${\mathfrak l}$ with the following properties:
\begin{enumerate}
\item The ray ${\mathfrak l}$ starting at a boundary vertex points inside the disk; 
\item No internal edge is parallel to this direction;
\item All rays starting at boundary vertices do not contain internal vertices of the network.
\end{enumerate}
\end{definition}
We remark that the first property may always be satisfied since all boundary vertices lie at a common straight interval in the boundary of $\mathcal N$. We then define the local winding number between a pair of consecutive edges $e_k,e_{k+1}$ as follows.

\begin{figure}
\centering{\includegraphics[width=0.9\textwidth]{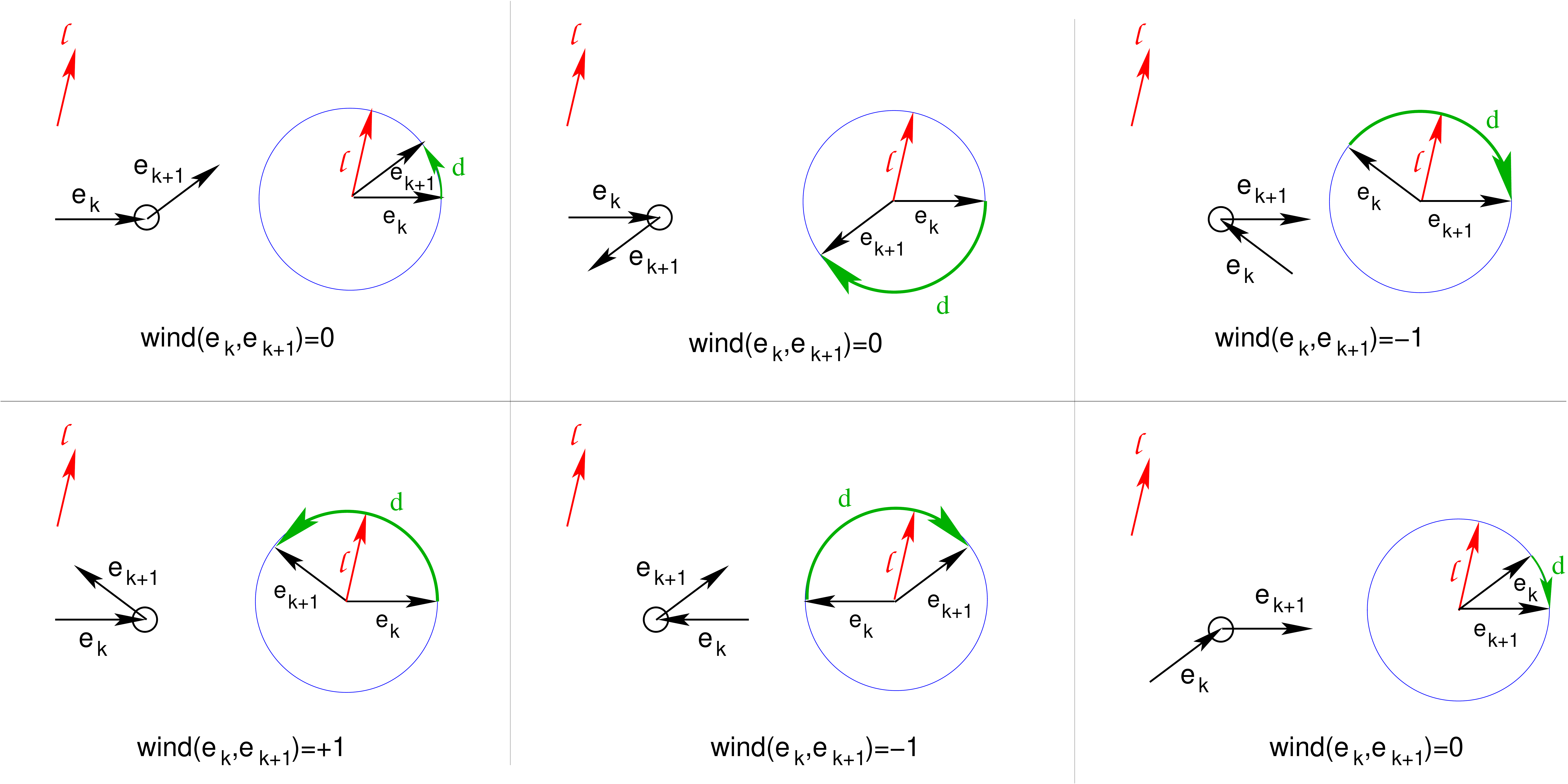}}
\caption{\small{\sl The local rule to compute the winding number.}}\label{fig:winding}
\end{figure}
\begin{definition}\label{def:winding_pair}\textbf{The local winding number at an ordered pair of oriented edges}
Let  $(e_k,e_{k+1})$ be an ordered pair of oriented edges. If they are not antiparallel, let us define
\begin{equation}\label{eq:def_s}
s(e_k,e_{k+1}) = \left\{
\begin{array}{ll}
+1 & \mbox{ if the ordered pair is positively oriented }  \\
0  & \mbox{ if } e_k \mbox{ and } e_{k+1} \mbox{ are parallel }\\
-1 & \mbox{ if the ordered pair is negatively oriented }
\end{array}
\right.
\end{equation}
Then the winding number of the ordered pair $(e_k,e_{k+1})$ with respect to the gauge ray direction $\mathfrak{l}$ is
\begin{equation}\label{eq:def_wind}
\mbox{wind}(e_k,e_{k+1}) = \left\{
\begin{array}{ll}
+1 & \mbox{ if } s(e_k,e_{k+1}) = s(e_k,\mathfrak{l}) = s(\mathfrak{l},e_{k+1}) = 1\\
-1 & \mbox{ if } s(e_k,e_{k+1}) = s(e_k,\mathfrak{l}) = s(\mathfrak{l},e_{k+1}) = -1\\
0  & \mbox{otherwise}.
\end{array}
\right.
\end{equation}
We illustrate the rule in Figure \ref{fig:winding}.

In the non generic case of ordered antiparallel edges, we slightly rotate the pair $(e_k, e_{k+1})$ to $(e_k^{\prime},
e_{k+1}^{\prime})$ as in Figure \ref{fig:antipar} and define
\begin{equation}\label{eq:s_antipar}
\mbox{wind}(e_k,e_{k+1}) = \lim_{\epsilon\to 0^+} \mbox{wind} (e_k^{\prime},e_{k+1}^{\prime}).
\end{equation}

\end{definition}
\begin{figure}
  \centering{\includegraphics[width=0.49\textwidth]{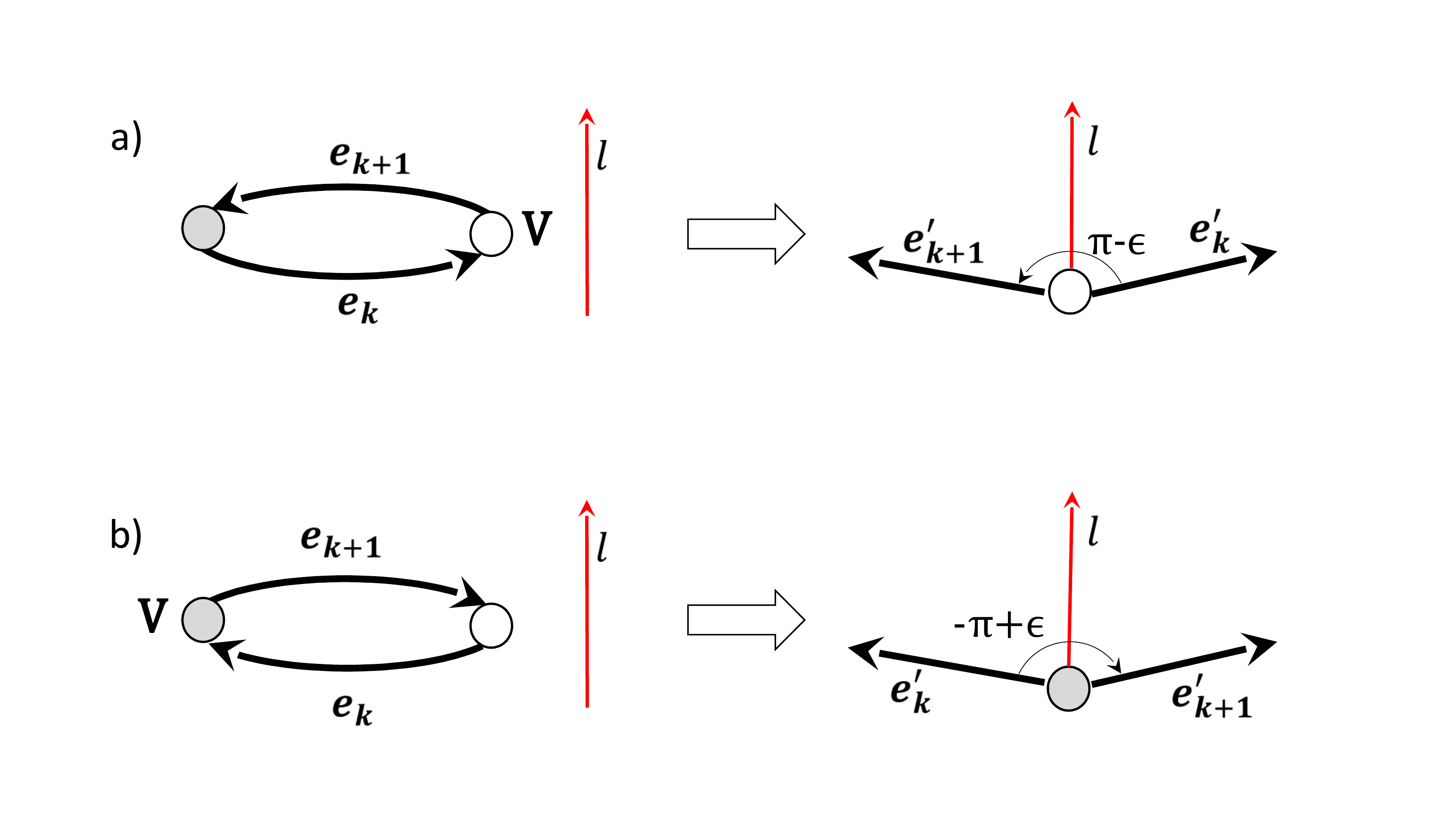}
	\includegraphics[width=0.49\textwidth]{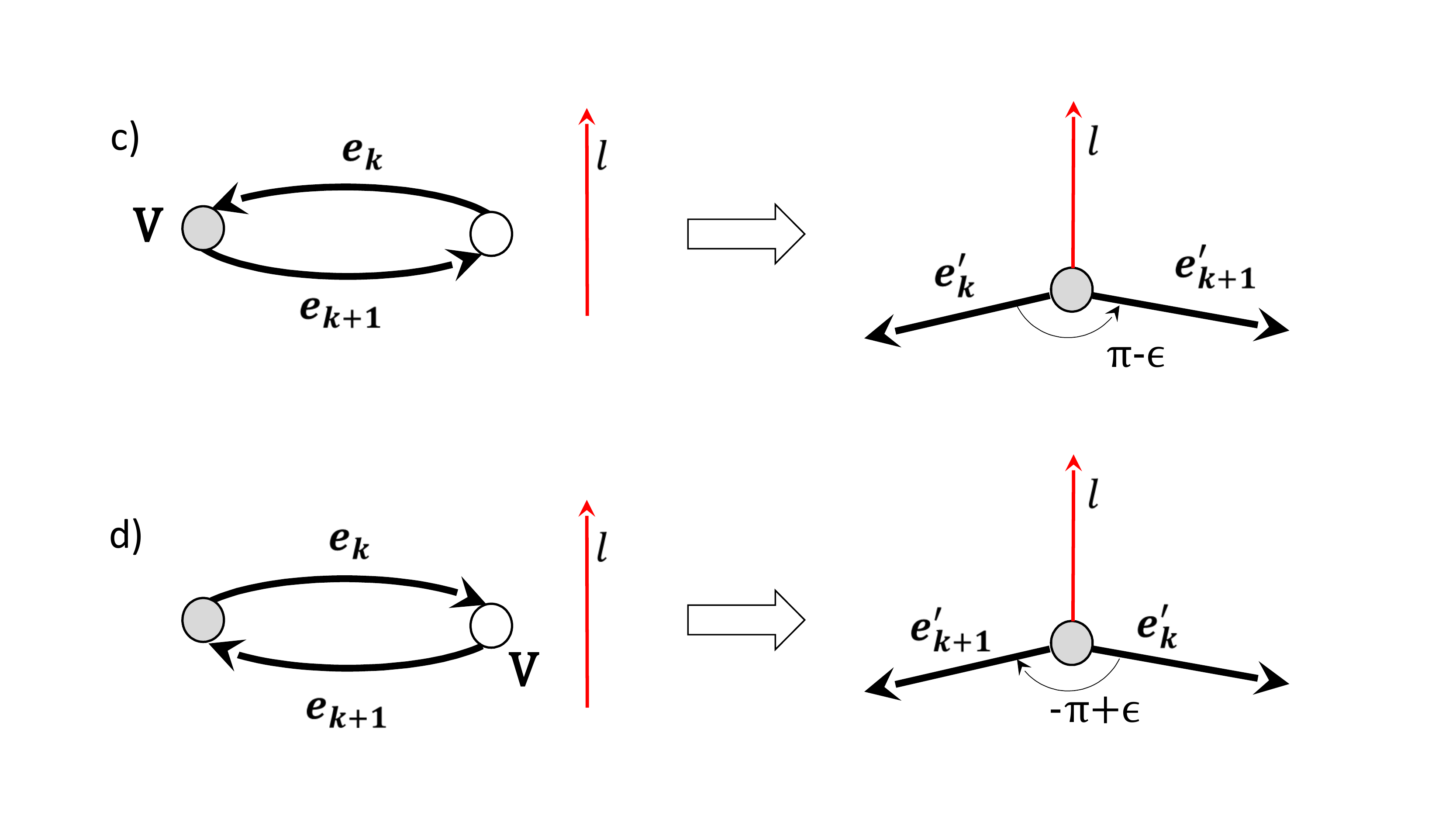}
      \caption{\small{\sl If the ordered pair $(e_k, e_{k+1})$ is antiparallel at $V$, we slightly rotate the two edge vectors at $V$ to compute $\mbox{wind}(e_k,e_{k+1})$. Using (\ref{eq:s_antipar}) and (\ref{eq:def_wind}), we get $a)$: $\mbox{wind}(e_k,e_{k+1})=1$; $b)$: $\mbox{wind}(e_k,e_{k+1})=-1$; $c)$: $\mbox{wind}(e_k,e_{k+1})=0$; $d)$: $\mbox{wind}(e_k,e_{k+1})=0$. }}\label{fig:antipar}}
\end{figure}

The local winding defined above has the following properties:

\begin{lemma}
\label{lem:rotation}  
\begin{enumerate}   
\item If we keep $e_k$, $e_{k+1}$ fixed and rotate the gauge direction $\mathfrak l$, $\mbox{wind}(e_k,e_{k+1})$ changes by $\pm 1$ each time $\mathfrak l$ passes $e_k$ or $e_{k+1}$;
\item If we keep $e_k$, $\mathfrak l$ fixed and  rotate $e_{k+1}$,  $\mbox{wind}(e_k,e_{k+1})$ changes by $\pm 1$ each time $e_{k+1}$  passes  $\mathfrak l$ or  $-e_k$.
\end{enumerate}
\end{lemma}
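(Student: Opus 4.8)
The plan is to reinterpret the combinatorial definition (\ref{eq:def_wind}) as a statement about the circle $S^1$ of oriented directions, and then to read off both assertions from the jump structure of a piecewise--constant function. First I would identify each oriented edge and the gauge ray with its angle in $\R/2\pi\Z$, writing $\alpha,\beta,\lambda$ for the angles of $e_k$, $e_{k+1}$ and $\mathfrak l$. Under this identification $s(u,v)$ is precisely the sign of the oriented shortest angle from $u$ to $v$: it equals $+1$ when $v$ lies in the open counterclockwise half--circle starting at $u$, equals $-1$ in the open clockwise half--circle, and equals $0$ when $u=v$. The gauge ray conditions of Definition~\ref{def:gauge_ray} guarantee $\lambda\neq\alpha$, so the degenerate coincidences are confined to isolated positions.

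With this dictionary the three simultaneous sign conditions in (\ref{eq:def_wind}) say exactly that $\lambda$ lies strictly inside the shorter open arc joining $\alpha$ to $\beta$, with $\mbox{wind}(e_k,e_{k+1})=+1$ if that minor arc is swept counterclockwise from $e_k$ to $e_{k+1}$ and $-1$ if clockwise; in all remaining positions of $\mathfrak l$ the value is $0$. Equivalently, $\mbox{wind}(e_k,e_{k+1})$ is the signed number of times the rotating direction crosses $\mathfrak l$ as it turns from $e_k$ to $e_{k+1}$ along the arc of length less than $\pi$. Establishing this equivalence is the conceptual heart of the argument; once it is in hand, both statements follow by inspection.

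For part (1) I would fix $\alpha,\beta$ and regard $\mbox{wind}$ as a function of $\lambda$. It is locally constant, equal to the fixed sign of the minor arc on that open arc and to $0$ on the complementary major arc, so its only discontinuities occur at the two endpoints $\lambda=\alpha$ and $\lambda=\beta$, i.e. when $\mathfrak l$ becomes parallel to $e_k$ or to $e_{k+1}$; at each such crossing the value jumps between $0$ and $\pm1$, a change of $\pm1$. Note that nothing happens when $\mathfrak l$ sweeps through the antipodal directions $-e_k,-e_{k+1}$, since those lie in the interior of the major arc where the value is identically $0$. For part (2) I would fix $\alpha,\lambda$ and regard $\mbox{wind}$ as a function of $\beta$, splitting according to the sign of $s(e_k,\mathfrak l)$: if $s(e_k,\mathfrak l)=+1$ a short computation shows $\mbox{wind}=+1$ exactly for $\beta$ in the open arc from $\lambda$ to $\alpha+\pi$ and $0$ otherwise, while if $s(e_k,\mathfrak l)=-1$ one gets $\mbox{wind}=-1$ on the open arc from $\alpha+\pi$ to $\lambda$. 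In either case the jumps occur precisely at $\beta=\lambda$ and $\beta=\alpha+\pi$, that is when $e_{k+1}$ passes $\mathfrak l$ or $-e_k$, and each jump equals $\pm1$.

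The step I expect to require the most care is the bookkeeping that pins the transition points down to exactly $\{e_k,e_{k+1}\}$ in part (1) and to $\{\mathfrak l,-e_k\}$ in part (2), and in particular the verification that no change occurs at the antipodal directions. The degenerate antiparallel configurations (where $e_{k+1}$ becomes antiparallel to $e_k$, i.e. $\beta=\alpha+\pi$) are handled by the convention (\ref{eq:s_antipar}): the one--sided limits from the two sides of such a crossing realize the two constant values $0$ and $\pm1$, so the jump of $\pm1$ is consistent with the limiting definition. Everything else reduces to an elementary comparison of the three arcs cut out on $S^1$ by $\alpha,\beta,\lambda$.
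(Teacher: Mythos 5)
Your proposal is correct and complete: the identification of $\mbox{wind}(e_k,e_{k+1})=\pm 1$ with the condition that $\mathfrak l$ lies strictly inside the minor arc from $e_k$ to $e_{k+1}$ (signed by the sweep direction) is exactly the right reading of (\ref{eq:def_wind}), and both parts then follow from locating the jump points of the resulting piecewise-constant function, including the check that nothing happens at the antipodal directions in part (1) and that the antiparallel crossing in part (2) is compatible with the convention (\ref{eq:s_antipar}). The paper itself gives no argument here (it simply declares the proof straightforward), so your write-up supplies precisely the details the authors omitted; the only nitpick is that $s(u,v)=0$ also at antiparallel pairs, not only at $u=v$, but this does not affect your case analysis since you treat those configurations as the degenerate ones.
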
  
The proof is straightforward.

Let $b_{i_r}$, $r\in[k]$, $b_{j_l}$, $l\in[n-k]$, respectively  be the set of boundary sources and boundary sinks 
associated to the given orientation. Then draw the rays ${\mathfrak l}_{i_r}$, $r\in[k]$, starting at $b_{i_r}$ associated with the pivot columns of the given orientation. In Figure \ref{fig:Rules0} we show an example.

Let us now consider a directed path ${\mathcal P}=\{e=e_1,e_2,\cdots, e_m\}$ starting at a vertex $V_1$ (either a boundary source or internal vertex) and ending at a 
boundary sink $b_j$, where $e_1=(V_1,V_2)$, $e_2=(V_2,V_3)$, \ldots, $e_m=(V_m,b_j)$. At each edge the orientation of the path coincides with the orientation of this edge in the graph.
We assign three numbers to ${\mathcal P}$:
\begin{enumerate}
\item The \textbf{weight $w({\mathcal P})$} is simply the product of the weights $w_l$ of all edges $e_l$ in ${\mathcal P}$, $w({\mathcal P})=\prod_{l=1}^m w_l$. If we pass the 
same edge $e$ of weight $w_e$ $r$ times, the weight is counted as $w_e^r$;
\item The \textbf{generalized winding number} $\mbox{wind}({\mathcal P})$ is the sum of the local winding numbers at each ordered pair of its edges
$\mbox{wind}(\mathcal P) = \sum_{k=1}^{m-1} \mbox{wind}(e_k,e_{k+1}),$ 
with $\mbox{wind}(e_k,e_{k+1})$ as in Definition \ref{def:winding_pair}; 
\item $\mbox{int}(\mathcal P)$ is the \textbf{number  of intersections} between the path and the rays ${\mathfrak l}_{i_r}$, $r\in[k]$: $\mbox{int}(\mathcal P) = \sum\limits_{s=1}^m \mbox{int}(e_s)$, where $\mbox{int}(e_s)$ is the number of intersections of gauge rays ${\mathfrak l}_{i_r}$ with $e_s$.
\end{enumerate}
The generalized winding of the path $\mathcal P$ depends on the gauge ray direction $\mathfrak{l}$ since it
counts how many times the tangent vector to the 
path is parallel and has the same orientation as ${\mathfrak l}$; also the number of intersections $\mbox{int}(\mathcal P)$ depends on $\mathfrak{l}$. 

\begin{definition}\label{def:edge_vector}\textbf{The edge vector $E_e$.}
For any edge $e$, let us consider all possible directed paths ${\mathcal P}:e\rightarrow b_{j}$, 
in $({\mathcal N},{\mathcal O},{\mathfrak l})$ such that the first edge is $e$ and the end point is the boundary vertex $b_{j}$, $j\in[n]$.
Then the $j$-th component of $E_{e}$ is defined as:
\begin{equation}\label{eq:sum}
\left(E_{e}\right)_{j} = \sum\limits_{{\mathcal P}\, :\, e\rightarrow b_{j}} (-1)^{\mbox{wind}({\mathcal P})+ \mbox{int}({\mathcal P})} 
w({\mathcal P}).
\end{equation}
If there is no path from $e$ to $b_{j}$, the $j$--th component of $E_e$ is assigned to be zero. By definition, at the edge $e$ at the boundary sink $b_j$, the edge vector $E_{e}$ is
\begin{equation}\label{eq:vec_bou_sink}
\left(E_{e}\right)_{k} =  (-1)^{\mbox{int}(e)} w(e) \delta_{jk}.
\end{equation}
\end{definition}

In particular, if $b_j$ is a boundary source, then for any $e$, the $j$-th component of $E_e$ is equal to zero. 
If $e$ is an edge belonging to the connected component of an isolated boundary sink $b_j$, 
then $E_e$ is proportional to the $j$--th vector of the canonical basis, whereas $E_e$ is the null vector if $e$ is an edge belonging to the connected component of an isolated boundary source.

If the number of paths starting at $e$ and ending at $b_j$ is finite for a given edge $e$ and destination $b_j$, the component $\left(E_{e}\right)_{j}$  
in (\ref{eq:sum}) is a polynomial in the edge weights.

If the number of paths starting at $e$ and ending at $b_j$ is infinite and the weights are sufficiently small, it is easy to check that the right hand side in (\ref{eq:sum}) converges. In Section 
\ref{sec:rational} we adapt the summation procedures of \cite{Pos} and \cite{Tal2} to prove that the edge vector components are rational expressions with subtraction-free denominators and provide explicit expressions in Theorem \ref{theo:null}. 

\subsection{Edge--loop erased walks, conservative and edge flows}\label{sec:flows}

Next we study the structure of the expressions representing the components of the edge vectors.

First, following \cite{Fom,Law}, we adapt the notion of loop-erased walk to our situation, since our walks start at an edge, not at a vertex. 
\begin{definition}
\label{def:loop-erased-walk}
\textbf{Edge loop-erased walks.}  Let ${\mathcal P}$ be a walk (directed path) given by
$$
V_e \stackrel{e}{\rightarrow} V_1 \stackrel{e_1}{\rightarrow} V_2 \rightarrow \ldots \rightarrow b_j,
$$
where $V_e$ is the initial vertex of the edge $e$. The edge loop-erased part of  ${\mathcal P}$, denoted $LE({\mathcal P})$, is defined recursively as 
follows. If ${\mathcal P}$ does not pass any edge twice (i.e. all edges $e_i$ are distinct), then $LE({\mathcal P})={\mathcal P}$.
Otherwise, set $LE({\mathcal P})=LE({\mathcal P}_0)$, where ${\mathcal P}_0$ is obtained from ${\mathcal P}$ removing the first
edge loop it makes; more precisely, given all pairs $l,s$ with $s>l$ and $e_l = e_s$, one chooses the one with the smallest value of $s$ and removes the cycle
$$
V_l \stackrel{e_l}{\rightarrow} V_{l+1} \stackrel{e_{l+1}}\rightarrow V_{l+2} \rightarrow \ldots \stackrel{e_{s-1}}\rightarrow V_{s} ,
$$
from ${\mathcal P}$.
\end{definition}

\begin{remark}\label{rem:loop}
An edge loop-erased walk can pass twice through the first vertex $V_e$, but it cannot pass twice any other vertex due to  perfectness.
For example, the directed path $1,2,3,4,5,6,7,8,12$ at Figure~\ref{fig:loop_erased} is edge loop-erased but it passes twice through the starting 
vertex $V_1$.
In general, the edge loop-erased walk does not coincide with the loop-erased walk defined in \cite{Fom, Law}. For instance, the directed path
$1,2,3,4,5,6,7,8,9,4,11$ has edge loop-erased walk $1,2,3,4,11$ and the loop-erased walk $7,8,9,4,11$. 

The two definitions coincide if $e$ starts at a boundary source.  

In our text we never use loop-erased walks in the sense of \cite{Fom} and we use the notation $LE({\mathcal P})$ in the sense of 
Definition~\ref{def:loop-erased-walk}. 
\end{remark}

\begin{figure}
  \centering{
	\includegraphics[width=0.27\textwidth]{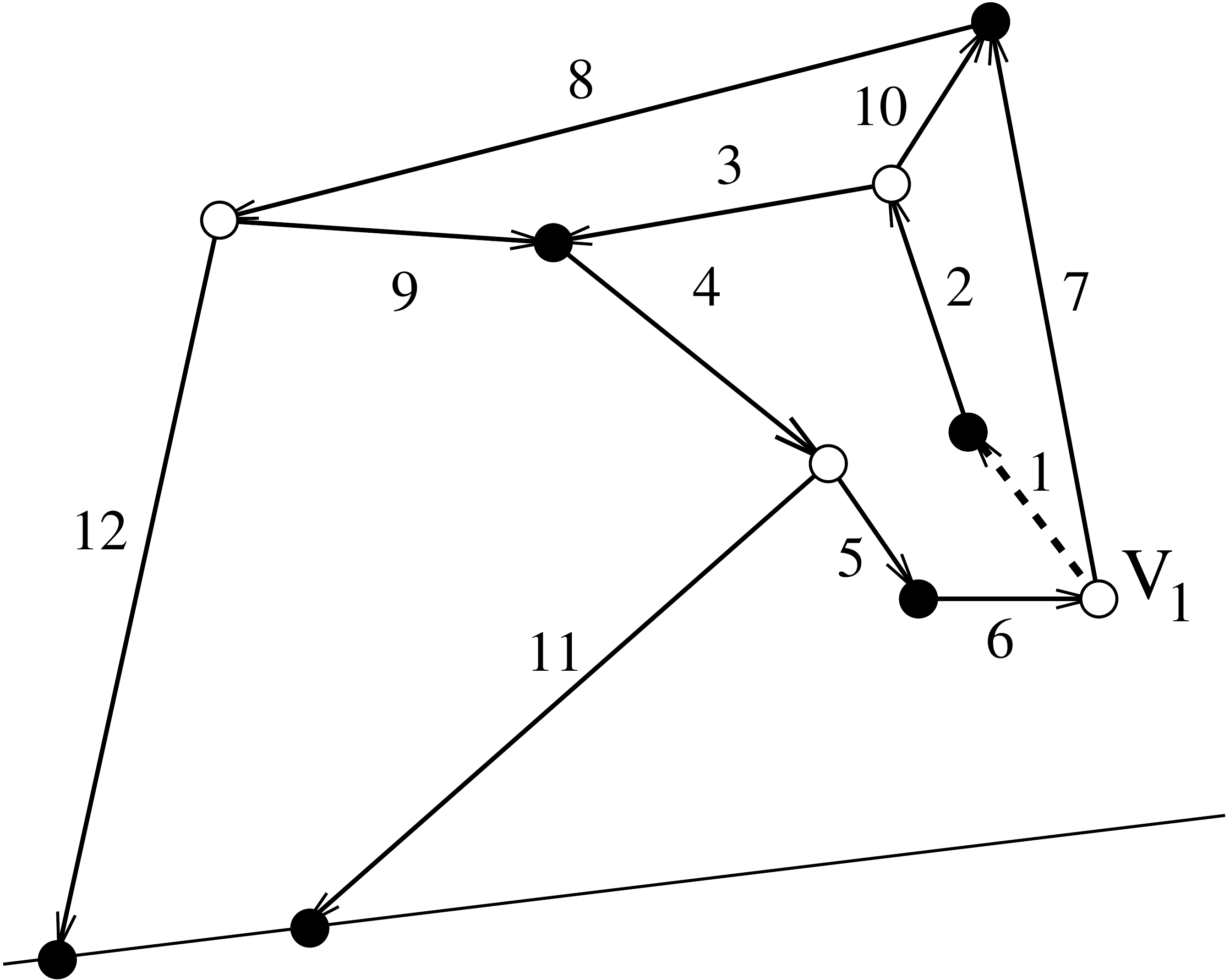}}
  \caption{\small{\sl The graph of Remark \ref{rem:loop}.}\label{fig:loop_erased}}
\end{figure}

With this procedure, to each path starting at $e$ and ending at $b_j$ we associate a unique edge loop-erased walk $LE({\mathcal P})$, where the latter path is either acyclic or possesses one simple cycle passing through the initial vertex.
Then we formally reshuffle the summation over 
infinitely many paths starting at $e$ and ending at $b_j$ to a summation over a finite number of equivalent classes  $[LE({\mathcal P}_s)]$, each one consisting of all paths sharing the 
same edge loop-erased walk, $LE({\mathcal P}_s)$, $s=1,\ldots, S$. Let us remark that 
$ \mbox{int}({\mathcal P})-\mbox{int}(LE({\mathcal P}_s))=0 \,\,(\!\!\!\!\mod 2) $ for any ${\mathcal P}\in [LE({\mathcal P}_s)]$, and
$ \mbox{wind}({\mathcal P})-\mbox{wind}(LE({\mathcal P}_s))$ has the same parity as the number of simple cycles of ${\mathcal P}$ minus the number of 
simple cycles of $LE({\mathcal P}_s)$.
With this in mind, we rexpress (\ref{eq:sum}) as follows
\begin{equation}\label{eq:sum2}
\left(E_{e}\right)_{j} = \sum\limits_{s=1}^S (-1)^{\mbox{wind}(LE({\mathcal P}_s))+ \mbox{int}(LE({\mathcal P}_s))}\left[ 
\mathop{\sum\limits_{{\mathcal P}:e\rightarrow b_{j}}}_{{\mathcal P}\in [LE({\mathcal P}_s)] } (-1)^{\mbox{wind}({\mathcal P})-\mbox{wind}(LE({\mathcal P}_s))  } 
w({\mathcal P}) \right].
\end{equation}

We remark that the winding number along each simple closed loop introduces a $-$ sign in agreement with \cite{Pos}. Therefore the summation over paths may be interpreted as a discretization of path integration in some spinor theory. In typical spinor theories the change of phase during the rotation of the spinor corresponds to standard measure on the group $U(1)$ and requires the use of complex numbers. The introduction of the gauge direction forces the use of $\delta$--type measures instead of the standard measure on $U(1)$, and it permits to work with real numbers only.

Next we adapt the definitions of flows and conservative flows in \cite{Tal2} to our case.

\begin{definition}\label{def:edge_flow}\textbf{Edge flow at $e$}. A collection $F_e$ of distinct edges in a plabic graph $\mathcal G$ is called edge flow starting at the edge $e$ if: 
\begin{enumerate}
\item It contains the edge $e$;
\item For each interior vertex $V_d$ in $\mathcal G$ except the starting vertex of $e$  the number of edges of $F$ that arrive at $V_d$ is equal to the number of edges of $F$ that leave from $V_d$;
\item If $V$ is the starting vertex of $e$, the number of edges of $F$ that arrive at $V$ is equal to the number of edges of $F$ that leave from $V$ minus 1;
\item It contains no boundary edges at sources, except possibly $e$ itself.  
\end{enumerate}
We  denote by   ${\mathcal F}_{ej}(\mathcal G)$ the collection of edge flows at edge $e$ containing the boundary sink $b_j$.  
\end{definition}

\begin{definition}\label{def:cons_flow}\textbf{Conservative flow \cite{Tal2}}. A collection $C$ of distinct edges in a plabic graph $\mathcal G$ is called a conservative flow if 
\begin{enumerate}
\item For each interior vertex $V_d$ in $\mathcal G$ the number of edges of $C$ that arrive at $V_d$ is equal to the number of edges of $C$ that leave from $V_d$;
\item $C$ does not contain edges incident to the boundary.
\end{enumerate}
We denote the set of all conservative flows $C$ in $\mathcal G$  by ${\mathcal C}(\mathcal G)$. In particular, ${\mathcal C}(\mathcal G)$ contains the trivial flow with no edges to which we assign weight 1.
\end{definition}

The conservative flows are collections of non-intersecting simple loops in the directed graph $\mathcal G$.

In our setting an edge flow  $F_{e,b_j}$ in ${\mathcal F}_{e,b_j}(\mathcal G)$ is either an edge loop-erased walk $P_{e,b_j}$ starting at the edge $e$ and ending at the boundary sink $b_j$ or the union of $P_{e,b_j}$ with a conservative flow with no common edges with $P_{e,b_j}$. In particular, our definition of edge flow coincides with the definition of flow in \cite{Tal2} if $e$ starts at a boundary source.

Next we assign weight, winding and intersection numbers to edge flows, and weight to conservative flows. We remark that in \cite{Tal2} there is no winding nor intersection number assigned to flows from boundary to boundary.
\begin{definition}\label{def:numb_flows}
\begin{enumerate}
\item We assign one number to each $C\in {\mathcal C}(\mathcal G)$: the \textbf{weight $w(C)$} is the product of the weights of all edges in $C$.
\item Let  $F_{e,b_j}\in{\mathcal F}_{e,b_j}(\mathcal G)$ be  the union of the edge loop-erased walk $P_{e,b_j}$ with a conservative flow with no common edges with $P_{e,b_j}$ (this conservative flow may be the trivial one). We assign three numbers to $F_{e,b_j}$:
\begin{enumerate} 
\item The \textbf{weight $w(F_e)$} is the product of the weights of all edges in $F_e$.
\item The \textbf{winding number} $\mbox{wind}(F_{e,b_j})$:
\begin{equation}
\label{eq:wind_flow}
\mbox{wind}(F_{e,b_j}) = \mbox{wind}(P_{e,b_j});
\end{equation}
\item The \textbf{intersection number} $\mbox{int}(F_{e,b_j})$:
\begin{equation}
\label{eq:int_flow}
\mbox{int}(F_{e,b_j}) = \mbox{int}(P_{e,b_j}).
\end{equation}
\end{enumerate}
\end{enumerate}
\end{definition}

\subsection{The linear system on $({\mathcal N},\mathcal O,\mathfrak l)$}\label{sec:linear}

The edge vectors satisfy linear relations at the vertices of ${\mathcal N}$. In Theorem \ref{theo:consist} we prove that this set of linear relations provides a unique system of edge vectors on $({\mathcal N},\mathcal O,\mathfrak l)$ for any chosen set of independent vectors at the boundary sinks. Therefore the components of the edge vectors in Definition \ref{def:edge_vector} have a unique rational representation. In the next Section, we provide their explicit representation
in Theorem \ref{theo:null}.

\begin{lemma}
\label{lem:relations}
The edge vectors $E_e$ on $({\mathcal N},\mathcal O,\mathfrak l)$ satisfy the following linear equation at each vertex:  
\begin{enumerate}
\item  At each bivalent vertex with incoming edge $e$ and outgoing edge $f$:
\begin{equation}\label{eq:lineq_biv}
E_e =  (-1)^{\mbox{int}(e)+\mbox{wind}(e,f)} w_e E_f;
\end{equation}
\item At each trivalent black vertex with incoming edges $e_2$, $e_3$ and outgoing edge $e_1$ we have two relations:
\begin{equation}\label{eq:lineq_black}
E_2 =  (-1)^{\mbox{int}(e_2)+\mbox{wind}(e_2, e_1)}\ w_2 E_1,\quad\quad
E_3 =  (-1)^{\mbox{int}(e_3)+\mbox{wind}(e_3, e_1)}\  w_3 E_1;
\end{equation}
\item At each trivalent white vertex with incoming edge $e_3$ and outgoing edges $e_1$, $e_2$:
\begin{equation}\label{eq:lineq_white}
E_3 =  (-1)^{\mbox{int}(e_3)+\mbox{wind}(e_3, e_1)}\ w_3 E_1 + (-1)^{\mbox{int}(e_3)+\mbox{wind}(e_3, e_2)}\ w_3 E_2,
\end{equation}
\end{enumerate}
where $E_k$ denotes the vector associated to the edge $e_k$.
\end{lemma}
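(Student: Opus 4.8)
The plan is to establish each of the three vertex relations directly from the definition of the edge vector components as signed, weighted sums over directed paths (equation~(\ref{eq:sum})), by setting up a weight-preserving bijection between paths starting at an edge entering a vertex $V$ and paths starting at the edge(s) leaving $V$. The common mechanism is this: every directed path $\mathcal{P}$ beginning with an edge $e$ that points into $V$ is obtained by prepending $e$ to a directed path $\mathcal{P}'$ beginning at one of the outgoing edges of $V$. This prepending map is a bijection onto paths from the outgoing edges (for the trivalent white case it is a bijection onto the disjoint union over the two outgoing edges), and it manifestly multiplies the weight by $w_e$, since $w(\mathcal{P}) = w_e \, w(\mathcal{P}')$.

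The heart of the argument is bookkeeping the two sign-carrying indices $\mbox{wind}$ and $\mbox{int}$ under this prepending. First I would track $\mbox{int}$: since $\mbox{int}(\mathcal{P}) = \sum_s \mbox{int}(e_s)$ is additive over edges, prepending $e$ changes the intersection count by exactly $\mbox{int}(e)$, contributing the factor $(-1)^{\mbox{int}(e)}$ uniformly. Second I would track $\mbox{wind}$: the generalized winding $\mbox{wind}(\mathcal{P}) = \sum_k \mbox{wind}(e_k, e_{k+1})$ is a sum over \emph{consecutive ordered pairs}, so prepending $e$ in front of a path whose first edge is $f$ (an outgoing edge of $V$) adds precisely one new term, $\mbox{wind}(e, f)$, to the sum. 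Hence $\mbox{wind}(\mathcal{P}) = \mbox{wind}(e,f) + \mbox{wind}(\mathcal{P}')$, yielding the factor $(-1)^{\mbox{wind}(e,f)}$. Combining, each summand in $(E_e)_j$ equals $(-1)^{\mbox{int}(e)+\mbox{wind}(e,f)} w_e$ times the corresponding summand in $(E_f)_j$, which is exactly the claimed relations~(\ref{eq:lineq_biv})--(\ref{eq:lineq_white}) componentwise in $j$.

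For the three cases the only difference is the set of outgoing edges at $V$. At a bivalent vertex there is one outgoing edge $f$, giving~(\ref{eq:lineq_biv}). At a trivalent black vertex perfectness forces exactly one outgoing edge $e_1$, and since both incoming edges $e_2, e_3$ must be continued through $e_1$, I apply the argument twice to obtain the two relations in~(\ref{eq:lineq_black}). At a trivalent white vertex there are two outgoing edges $e_1, e_2$, so a path starting at $e_3$ continues through either $e_1$ or $e_2$; partitioning the path-sum accordingly and applying the prepending bijection to each part produces the two-term relation~(\ref{eq:lineq_white}), with the common prefactor $(-1)^{\mbox{int}(e_3)} w_3$ and the two winding signs $(-1)^{\mbox{wind}(e_3,e_1)}$ and $(-1)^{\mbox{wind}(e_3,e_2)}$.

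I expect the main obstacle to be convergence and well-definedness rather than the sign algebra: when there are infinitely many paths (cyclic networks), the sum~(\ref{eq:sum}) must be read as the rational expression whose existence is asserted via the reshuffling into edge loop-erased classes in~(\ref{eq:sum2}). I would therefore either argue in the regime of sufficiently small weights where all series converge absolutely, so that rearrangement and the termwise bijection are legitimate, and then invoke rationality (Theorem~\ref{theo:null}) to extend the identity to all positive weights by analytic continuation; or, more cleanly, phrase the prepending bijection at the level of the finite edge loop-erased decomposition, checking that the parity statements recorded after~(\ref{eq:sum2}) for $\mbox{int}$ and $\mbox{wind}$ are compatible with the single extra pair $(e,f)$ introduced by prepending. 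The one genuinely delicate point is the antiparallel configuration of Definition~\ref{def:winding_pair}: one must verify that the limiting convention~(\ref{eq:s_antipar}) for $\mbox{wind}(e,f)$ is consistent with the local winding actually accumulated by the paths passing straight through such a pair, but this follows from Lemma~\ref{lem:rotation} and the fact that the gauge ray avoids internal vertices.
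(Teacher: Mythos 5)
Your proposal is correct and takes essentially the same approach as the paper: the paper's proof is a single remark that the relations ``follow directly from the definition of edge vector components as summations over all paths starting from this edge,'' which is precisely your prepending bijection together with the additivity of $w(\cdot)$, $\mbox{int}(\cdot)$ and $\mbox{wind}(\cdot)$ under concatenation of the first edge, split over the one or two outgoing edges at the vertex. Your additional care about convergence (small weights plus rationality, or working with the edge loop-erased classes of (\ref{eq:sum2})) and about antiparallel pairs addresses points the paper leaves implicit, but it does not change the route.
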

This statement follows directly from the definition of edge vector components as summations over all paths starting from this edge. Let us remark that the last two formulas can be naturally generalized for perfect graphs and vertices of valency greater then 3.



Next we show that, for any given boundary condition at the boundary sink vertices, the linear system in Lemma \ref{lem:relations} defined by equations (\ref{eq:lineq_black}), (\ref{eq:lineq_white}) and (\ref{eq:lineq_biv}) at the internal vertices of $(\mathcal N, \mathcal O, \mathfrak{l})$ possesses a unique solution.

\begin{theorem}\textbf{Full rank of the geometric system of equations for edge vectors on $(\mathcal N, \mathcal O, \mathfrak{l})$.}\label{theo:consist}
Let $(\mathcal N, \mathcal O, \mathfrak{l})$ be a given plabic network with orientation ${\mathcal O}={\mathcal O}(I) $ and gauge ray direction $\mathfrak{l}$.

Given a set $\{B_j\,|\,j\in\bar I\}$ of $n-k$ linearly independent vectors assigned to the boundary sinks $b_j$, let the corresponding edge vectors be defined by: $E_{e_j} = (-1)^{\mbox{int}(e_j)} w_{e_j} B_j$, $j\in\bar I$.  Then  the linear system of equations (\ref{eq:lineq_biv})--(\ref{eq:lineq_white}) at all the internal vertices of $(\mathcal N, \mathcal O, \mathfrak{l})$ has full rank and the number of equations coincides with the number of unknowns, therefore it is consistent and provides a unique system of edge vectors on 
$(\mathcal N, \mathcal O, \mathfrak{l})$. 

Moreover, if we properly order variables and equations, the determinant of the matrix $M$ for this linear system is the sum of the weights of all conservative flows in $(\mathcal N,\mathcal O)$:
\begin{equation}
\label{eq:tal_den}
\det M = \sum\limits_{C\in {\mathcal C}(\mathcal G)}  w(C).
\end{equation}
\end{theorem}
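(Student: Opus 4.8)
The plan is to reduce the vector system to a single scalar system, to identify its coefficient matrix with $I-\mathcal B$ for a weighted edge--transfer matrix $\mathcal B$, and then to evaluate $\det(I-\mathcal B)$ by its cycle expansion. Since the relations (\ref{eq:lineq_biv})--(\ref{eq:lineq_white}) act identically on each of the $n$ components, it suffices to work with one component; the coefficient matrix $M$ is the same for every component. I index the unknowns by the edges carrying an unknown vector, namely the $n_I$ internal edges and the $k$ boundary source edges, and I label each scalar equation by the incoming edge whose vector stands on its left hand side. By perfectness every such edge has a unique internal head, hence labels exactly one equation, so equations and unknowns are in natural bijection; counting incoming ends gives $d_W+d_B$ bivalent equations, $2t_B$ black equations and $t_W$ white equations, and the identity $2t_B+t_W+d_W+d_B=n_I+k$ shows that the number of equations equals the number of unknowns. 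With this pairing the diagonal entry of $M$ is the coefficient $1$ of $E_e$ in its own equation, while the off-diagonal entries come from the outgoing edges $f$ at the head of $e$; thus $M=I-\mathcal B$ with $\mathcal B_{e,f}=(-1)^{\mbox{int}(e)+\mbox{wind}(e,f)}w_e$ whenever $f$ is an internal edge continuing $e$ (that is, $\mbox{head}(e)=\mbox{tail}(f)$), and $\mathcal B_{e,f}=0$ otherwise. Boundary sink edges never appear as unknowns, their prescribed vectors passing to the right hand side, and boundary source edges never occur as such an $f$, so they contribute only zero columns of $\mathcal B$.

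Next I would expand $\det(I-\mathcal B)=\sum_{\sigma}\mathrm{sgn}(\sigma)\prod_e(I-\mathcal B)_{e,\sigma(e)}$ and read each permutation through its cycle decomposition. Fixed points contribute the factor $1$, while a nontrivial cycle $e_1\to e_2\to\cdots\to e_\ell\to e_1$ forces $\mbox{head}(e_a)=\mbox{tail}(e_{a+1})$, i.e.\ it is a closed directed walk of distinct internal edges in $\mathcal N$. Perfectness bounds the flow degree at every internal vertex by one (a white vertex has a single incoming edge, a black vertex a single outgoing one, a bivalent vertex one of each), so each such closed walk is a simple loop and distinct cycles are vertex disjoint; the union of the nontrivial cycles of $\sigma$ is therefore exactly a conservative flow in the sense of Definition \ref{def:cons_flow}, which sets up a bijection between contributing permutations and elements of $\mathcal C(\mathcal G)$, the identity permutation matching the trivial flow. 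A routine sign count ($\mathrm{sgn}(\sigma)=(-1)^{(\sum\ell_j)-m}$ against the $(-1)^{\sum\ell_j}$ produced by turning each off-diagonal entry into $-\mathcal B$) collapses the term of $\sigma$ to $(-1)^m\prod_{j=1}^m\mathcal B(\gamma_j)$, where $m$ is the number of loops and $\mathcal B(\gamma_j)=(-1)^{\mbox{int}(\gamma_j)+\mbox{wind}(\gamma_j)}w(\gamma_j)$ is the product of the entries along the loop $\gamma_j$.

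Everything then hinges on one sign lemma: for every simple directed loop $\gamma$ in $\mathcal N$ one has $(-1)^{\mbox{wind}(\gamma)+\mbox{int}(\gamma)}=-1$. I would prove this by separating the two parities. For the winding, $\mbox{wind}(\gamma)=\sum_a\mbox{wind}(e_a,e_{a+1})$ is precisely the signed number of times the tangent direction of $\gamma$ sweeps through the fixed gauge direction $\mathfrak l$; since $\gamma$ is a simple closed curve and $\mathfrak l$ is generic (no edge is parallel to it), the theorem of turning tangents makes the tangent map have degree $\pm1$, so this signed count equals $\pm1$ and $\mbox{wind}(\gamma)$ is odd. For the intersections, each gauge ray $\mathfrak l_{i_r}$ starts at a boundary source lying outside the compact loop $\gamma$ and runs off to infinity, again outside $\gamma$; a half line with both ends outside a simple closed curve meets it an even number of times, so $\mbox{int}(\gamma)=\sum_r|\gamma\cap\mathfrak l_{i_r}|$ is even. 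Hence $\mbox{wind}(\gamma)+\mbox{int}(\gamma)$ is odd, $\mathcal B(\gamma_j)=-w(\gamma_j)$, and the term of $\sigma$ becomes $(-1)^m\prod_j(-w(\gamma_j))=\prod_jw(\gamma_j)=w(C)$. Summing over permutations yields $\det M=\sum_{C\in\mathcal C(\mathcal G)}w(C)$, which is formula (\ref{eq:tal_den}).

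Finally, since all weights are positive and the trivial flow contributes $1$, this sum is at least $1$, so $\det M>0$; the matrix $M$ has full rank, the number of equations equals the number of unknowns, and the system has a unique solution for every choice of independent boundary vectors, giving a unique system of edge vectors, which by Lemma \ref{lem:relations} coincides with that of Definition \ref{def:edge_vector} when the boundary data are chosen accordingly. The main obstacle I anticipate is the sign lemma, and within it the winding parity: one must check that the discrete local rule of Definition \ref{def:winding_pair}, including the antiparallel limiting convention, genuinely computes the degree of the tangent map, so that no spurious contributions arise at sharp turns, the intersection parity being by contrast an elementary Jordan curve argument. The other delicate point is the combinatorial step showing that perfectness forces the disjoint permutation cycles to be vertex disjoint simple loops, since this is exactly what identifies the cycle expansion with $\mathcal C(\mathcal G)$ rather than with a larger family of edge collections.
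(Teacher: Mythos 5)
Your proof is correct and follows essentially the same route as the paper's: the same count of equations and unknowns, the same permutation expansion of $\det M$ with nontrivial cycles identified (via perfectness) as vertex-disjoint simple loops, i.e.\ conservative flows, and the same two parity facts collapsing each loop's contribution to its weight. The only difference is that you actually prove those parity facts (odd winding via the turning-tangents degree argument, even intersection number via a Jordan-curve argument), which the paper merely asserts in the corresponding step of its proof.
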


\begin{proof} Let $g+1$ be the number of faces of ${\mathcal N}$ and let $t_W, t_B, d_W$ and $d_B$ respectively be the number of trivalent white, of trivalent black, of bivalent white, and of bivalent black internal vertices of ${\mathcal N}$ as in (\ref{eq:vertex_type}), where $n_I$ is the number of internal edges ({\sl i.e.} edges not connected to a boundary vertex) of ${\mathcal N}$. The total number of equations is
  $$
  n_L=2t_B+ t_W+d_W+d_B =n_I+k,
  $$
  whereas the total number of variables is equal to the total number of edges $n_I+n$. Therefore the number of free boundary conditions is $n-k$ and equals the number of boundary sinks. 

Let us consider the inhomogeneous linear system obtained from equations (\ref{eq:lineq_biv})--(\ref{eq:lineq_white}) in the $
n_L$ unknowns given by the edge vectors not ending at the boundary sinks. Let us denote $M$ the $n_L\times n_L$ representative matrix of such linear system in which we enumerate edges so that each $r$-th row corresponds to the equation in (\ref{eq:lineq_biv}), (\ref{eq:lineq_black}) and (\ref{eq:lineq_white})  in which the edge $e_r$ ending at the given vertex is in the l.h.s.. Then $M$ has unit diagonal by construction. 

If the orientation $\mathcal O$ is acyclic, then it is possible to enumerate the edges of $\mathcal N$ so that their indices in the right hand sight of each equation are bigger than that of the index on the left hand side. Therefore $M$ is upper triangular with unit diagonal, $\det M=1$ and the system of linear relations at the vertices has full rank.

Suppose now that the orientation is not acyclic. The standard formula expresses the determinant of $M$ as:
\begin{equation}
\label{eq:detM}
\det M = \sum\limits_{\sigma\in {S_{n_L}}} \mbox{sign}(\sigma)\prod\limits_{i=1}^{n_L}m_{i,\sigma(i)},
\end{equation}
where $S_{n_L}$ is the permutation group and $\mbox{sign}$ denotes the parity of the permutation $\sigma$. 

Any permutation can be uniquely decomposed as the product of disjoint cycles:

$\sigma=(i_1,i_2,\ldots,i_{u_1+1})(j_1,j_2,\ldots,j_{u_2+1})\ldots(l_1,l_2,\ldots,l_{u_s+1}),$ 
and
$\mbox{sign}(\sigma)=(-1)^{u_1+u_2+\ldots+u_s}.$
On the other side, for $i\ne j$ $m_{i,j}\ne 0$ if and only if the ending vertex of the edge $i$ is the starting vertex of the edge $j$. Therefore $\prod\limits_{i=1}^{n_L} m_{i,\sigma(i)} \ne 0$ if and only if each cycle with $u_k>0$ in $\sigma$ coincides with a simple cycle in the graph, i.e. $\sigma$ encodes a conservative flow in the network. Therefore (\ref{eq:detM}) can be equivalently expressed as:
\begin{equation}
\label{eq:detM1}
\det M =\sum\limits_{C\in {\mathcal C}(\mathcal G)} \mbox{sign}(\sigma(C))\prod\limits_{i=1}^{n_L}m_{i,\sigma(i)}
\end{equation}
where
$\sigma(C)$ denotes the permutation corresponding to the conservative flow $C=C_1\cup C_2\cup\ldots\cup C_s$. Therefore 
\[
\mbox{sign}(\sigma(C))\prod\limits_{i=1}^{n_L}m_{i,\sigma(i)}=\prod\limits_{r=1}^s \left[(-1)^{u_r}\prod\limits_{t=1}^{u_r+1}(-1)^{1+\mbox{wind}(e_{i_t},e_{i_{t+1}})+\mbox{int}(e_{i_t})} w_{i_t}\right]= \prod\limits_{r=1}^s w(C_i)=  w(C),
\]
since the total winding of each simple cycle is $1\,\,(\!\!\!\mod 2)$, the total intersection number for each simple cycle is $0\,\,(\!\!\!\mod 2)$, and $w(C)=w(C_1)\cdots w(C_s)$. 
\end{proof}

\section{Extension of Talaska formula to internal edge vectors}\label{sec:rational}

A deep result of \cite{Pos}, see also \cite{Tal2}, is that each infinite summation in the square bracket of (\ref{eq:sum2}) is a subtraction-free rational expression when $e$ is the edge at a boundary source.
In this Section we adapt Theorem~3.2 in \cite{Tal2} to our purposes. The edge vectors $E_e$ defined in (\ref{eq:sum}) are linear combinations of the edge vectors at the boundary sinks, and the coefficients are rational expressions in the weights with subtraction-free denominator. We express them explicitly as functions of the edge flows and conservative flows. We remark that, contrary to the case in which the initial edge starts at a boundary source, if $e$ is an internal edge, the $j$--th component of $E_e$ may be null even if there exist paths starting at $e$ and ending at $b_j$ (see Section \ref{sec:null_vectors} and Figure \ref{fig:zero-vector}). 

\begin{theorem}\label{theo:null}\textbf{Rational representation for the components of vectors $E_e$}
Let $({\mathcal N},\mathcal O, \mathfrak l)$ be a plabic network representing a point $[A]\in \S \subset \GTNN$ with orientation $\mathcal O$ associated to the base $I =\{ 1\le i_1< i_2 < \cdots < i_k\le n\}$ in the matroid $\mathcal M$ and gauge ray direction $\mathfrak{l}$. Let us assign the vectors $B_j$ to the boundary sinks $b_j$, $j\in \bar I$.
Then edge vector $E_{e}$ at the edge $e$ defined in (\ref{eq:sum2}), is a rational expression in the weights on the network with subtraction-free denominator: 
\begin{equation}
\label{eq:tal_formula}
E_{e} =\mathlarger{\mathlarger{\sum\limits}_{j\in\bar I}}\ \ \left[ \frac{\displaystyle\sum\limits_{F\in {\mathcal F}_{e,b_j}(\mathcal G)} \big(-1\big)^{\mbox{wind}(F)+\mbox{int}(F)}\ w(F)}{\sum\limits_{C\in {\mathcal C}(\mathcal G)} \ w(C)}\right]\  B_j,
\end{equation}
where notations are as in Definitions~\ref{def:edge_flow},~\ref{def:cons_flow} and~\ref{def:numb_flows}.  
\end{theorem}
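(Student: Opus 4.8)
The plan is to combine two ingredients already developed in the paper: the linear system of Lemma~\ref{lem:relations} together with its full-rank statement and determinant formula from Theorem~\ref{theo:consist}, and a Cramer-style expansion of the solution. First I would observe that by Theorem~\ref{theo:consist} the edge vector $E_e$ at any internal edge is the unique solution of the inhomogeneous system $M X = b$, where $M$ is the $n_L\times n_L$ matrix with unit diagonal described there, $X$ collects the unknown edge vectors, and the right-hand side $b$ encodes the boundary data $E_{e_j}=(-1)^{\mbox{int}(e_j)} w_{e_j} B_j$. Since the system is linear in the $B_j$, the coefficient of $B_j$ in $E_e$ is a scalar rational function, and the common denominator is $\det M=\sum_{C\in{\mathcal C}(\mathcal G)} w(C)$ by \eqref{eq:tal_den}. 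This immediately yields the subtraction-free denominator and reduces everything to identifying the numerator as the signed, weighted sum over edge flows $F\in{\mathcal F}_{e,b_j}(\mathcal G)$.

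\smallskip

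The core of the argument is therefore a combinatorial identification of the numerator. By Cramer's rule, the coefficient of $B_j$ is (up to the sign and weight factors carried by the boundary normalization) a cofactor of $M$, i.e.\ a signed sum over those permutations $\sigma\in S_{n_L}$ whose nonzero term $\prod_i m_{i,\sigma(i)}$ corresponds to a configuration of edges that routes from $e$ toward the sink $b_j$. As in the proof of Theorem~\ref{theo:consist}, a product $\prod_i m_{i,\sigma(i)}$ is nonzero exactly when $\sigma$ decomposes into a directed path from the starting vertex of $e$ to $b_j$ together with a collection of disjoint simple cycles, and the latter is precisely an element of ${\mathcal C}(\mathcal G)$ disjoint from the path. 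This matches Definition~\ref{def:edge_flow}: an edge flow $F_{e,b_j}$ is the union of an edge loop-erased walk $P_{e,b_j}$ with a conservative flow sharing no edge with it. I would then check that the product of off-diagonal entries $m_{i,\sigma(i)}$ along the path contributes the weight $w(P_{e,b_j})$ times the sign factor $(-1)^{\mbox{wind}(P_{e,b_j})+\mbox{int}(P_{e,b_j})}$, exactly as in the cycle computation at the end of the proof of Theorem~\ref{theo:consist}, so that by \eqref{eq:wind_flow} and \eqref{eq:int_flow} we recover $(-1)^{\mbox{wind}(F)+\mbox{int}(F)} w(F)$.

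\smallskip

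The step I expect to be the main obstacle is the careful bookkeeping of signs and of the path/cycle decomposition. Three points demand attention. First, the permutation $\sigma$ underlying a cofactor is not a genuine permutation of all $n_L$ indices but a bijection on a restricted index set (the edges not yet fixed by boundary data), so I must verify that the Cramer cofactor sign, the permutation parity, and the local winding/intersection signs assemble into the uniform factor $(-1)^{\mbox{wind}(F)+\mbox{int}(F)}$ without a spurious overall sign; this is where the boundary normalization $E_{e_j}=(-1)^{\mbox{int}(e_j)} w_{e_j} B_j$ and the telescoping of $\mbox{int}$ over consecutive edges must be used. Second, I must confirm that distinct edge flows give distinct permutation terms and that every surviving term is an edge flow, so the correspondence is a bijection and there is no over- or under-counting. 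Third, the passage from the formal infinite sum \eqref{eq:sum2} over edge loop-erased classes to the finite cofactor sum requires the convergence and reshuffling already justified before \eqref{eq:sum2}, so I would invoke that discussion to guarantee that the rational function produced by Cramer's rule genuinely equals the analytic sum \eqref{eq:sum} for small weights, and hence everywhere by rationality. Once these sign and bijection checks are in place, substituting back into the expression for $E_e$ as a combination of the $B_j$ yields \eqref{eq:tal_formula} directly.
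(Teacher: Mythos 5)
Your proposal is correct in outline, but it follows a genuinely different route from the paper. The paper's proof is a direct adaptation of Talaska's argument: starting from the reshuffled series (\ref{eq:sum2}), it reduces the claim to the identity (sum over all paths $\mathcal P:e\to b_j$) $\times$ (sum over conservative flows) $=$ (sum over edge flows), and proves that identity by a weight-preserving, sign-reversing involution $\varphi$ on the ``bad'' pairs $(C,\mathcal P)$ --- moving the first loop of $\mathcal P$ into $C$, or the first cycle of $C$ met by $\mathcal P$ into $\mathcal P$ --- so that everything except the edge-flow terms cancels. You instead treat $E_e$ as the unique solution of the full-rank system of Theorem \ref{theo:consist} and expand it by Cramer's rule, identifying the numerator cofactors combinatorially. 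This is legitimate and non-circular (Lemma \ref{lem:relations} and Theorem \ref{theo:consist} are proved independently of Theorem \ref{theo:null}), and your three flagged difficulties are the right ones. What your route buys is economy: the subtraction-free denominator $\det M=\sum_C w(C)$ and rationality come for free from (\ref{eq:tal_den}), and the convergence/identification step for small weights is no worse than what the paper's own series manipulation already requires. What it costs is an extra combinatorial lemma not present in the paper: a Coates-type path--cycle expansion of the cofactors of $M$, namely that $\det M\cdot (M^{-1})_{er}$ equals the sum over edge-distinct directed paths from $e$ to $e_r$ of the product of the corresponding off-diagonal entries times the principal minor of $M$ on the complementary index set. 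This is standard but must be proved with the same care as the determinant computation in Theorem \ref{theo:consist} (in particular, that the bijections between index sets differing by one element decompose into exactly one path plus disjoint simple cycles, that perfectness forces such paths to be precisely the edge loop-erased walks of Definition \ref{def:loop-erased-walk}, and that the permutation parity absorbs the per-step minus signs with no residual global sign). The paper's involution avoids all linear algebra and stays closer to \cite{Tal2}; your argument replaces the involution by a second determinant evaluation. Either way the combinatorial core is comparable in length, so this is a valid alternative proof rather than a shortcut.
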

\begin{proof}
The proof is a straightforward adaptation of the proof in \cite{Tal2} for the computation of the Pl\"ucker coordinates. 
If the graph is acyclic, the proof of (\ref{eq:tal_formula}) is elementary since the denominator is one and the edge flows $F_{e,b_j}$ are in one-to-one correspondence with directed paths connecting $e$ to $b_j$. Therefore  (\ref{eq:tal_formula}) and (\ref{eq:sum}) coincide when $B_j$ is the $j$-th vector of the canonical basis, $j\in\bar I$.

Otherwise, in view of  (\ref{eq:sum2}), we have to prove the following identity:
\begin{equation}
\label{eq:tal_formula_2}
 \sum\limits_{{\mathcal P}:e\rightarrow b_{j}}  \sum\limits_{C\in {\mathcal C}(\mathcal G)}  (-1)^{\mbox{wind}({\mathcal P})+ \mbox{int}({\mathcal P})}  w({\mathcal P})  w(C)  = \sum\limits_{F\in {\mathcal F}_{e,b_j}(\mathcal G)} \big(-1\big)^{\mbox{wind}(F)+\mbox{int}(F)}\ w(F),
\end{equation}
where in the left-hand side the first sum is over all directed paths from $e$ to $b_j$. In the left-hand side we have two types of terms:
\begin{enumerate}
\item $\mathcal P$ is an edge loop-erased walk and $C$ is a conservative flow with no common edges with $\mathcal P$. By (\ref{eq:wind_flow}), the summation over this group coincides with the right-hand side of (\ref{eq:tal_formula_2});
\item $\mathcal P$ is not edge loop-erased or it is loop-erased, but has a common edge with  $C$.
\end{enumerate}
Following \cite{Tal2}, we prove that the summation over the second group gives zero by introducing a sign-reversing involution $\varphi$ on the set of pairs $(C,P)$. We first assign two numbers to each pair $(C,P)$ as follows:
\begin{enumerate}
\item Let $P=(e_1,\ldots,e_m)$. If $P$ is edge loop-erased, set $\bar s=+\infty$; otherwise, let $L_1=(e_l,e_{l+1},\ldots,e_{s})$ be the first loop erased according to Definition~\ref{def:loop-erased-walk} and set $\bar s=s$;
\item If $C$ does not intersect $P$, set $\bar t=+\infty$. Otherwise, set $\bar t$ the smallest $t$ such that $e_t\in P$ and $e_t\in C$. Denote the component of $C$ containing $e_{\bar t}$ by $L_2=(l_1,\ldots,l_p)$ with $l_1=e_{\bar t}$. 
\end{enumerate}
A pair $(C,P)$ belongs to the second group if and only if at least one of the numbers $\bar s$, $\bar t$ is finite. Moreover, in this case, $\bar s\ne \bar t$, because if  $e_{\bar s}=e_{\bar t}$, then $\bar t < \bar s$ by the labeling rules. We then define $(C^*,P^*)=\varphi(C,P)$ as follows:
\begin{enumerate}
\item  If $\bar s < \bar t$, then $P$ completes its first cycle $L_1$ before intersecting any cycle in $C$. In this case $L_1\cap C=\emptyset$, and we remove $L_1$ from $P$ and add it to $C$. Then $P^*=(e_1,\ldots,e_{l-1},e_s,\ldots,e_m)$ and $C^*=C\cup L_1$;
\item If $\bar t < \bar s$, then $P$ intersects  $L_2$ before completing its first cycle. Then we remove $L_2$ from $C$ and add it to $P$: $C^*=C\backslash L_2$, $P^*=(e_1,\ldots,e_{\bar t-1},e_{l_1}=e_{\bar t},e_{l_2},\ldots,e_{l_p},e_{\bar t+1},\ldots,e_m)$. 
\end{enumerate}
From the construction of $\varphi$ it follows immediately that $(C^*,P^*)$ belongs to the second group, $\varphi^2=\mbox{id}$, and $\varphi$ is sign-reversing since 
$w(C^*) w(P^*) = w(C) w(P)$, $\mbox{wind}(P) +\mbox{wind}(P^*) =1 \,\,
(\!\!\!\!\mod 2)$ and $\mbox{int}(P) +\mbox{int}(P^*) =0 \,\,
(\!\!\!\!\mod 2)$. 
\end{proof}

\begin{corollary}
\label{cor:bound_source}\textbf{The connection between the edge vectors at the boundary sources and Talaska formula for the boundary measurement matrix.} Under the hypotheses of Theorem \ref{theo:null}, let $e$ be the edge starting at the boundary source $b_{i_r}$. Then the number $\mbox{wind}(F)+\mbox{int}(F)$ has the same parity for all edge flows $F$ from $b_{i_r}$ to $b_j$ and it is equal to the number $N_{rj}$ of boundary sources between $i_r$ and $j$ in the orientation $\mathcal O$,
\begin{equation}\label{eq:index_source}
N_{rj} = \# \left\{ i_s \in I\ , \ i_s \in \big] \min \{i_r, j\}, \max \{ i_r , j \} \big[ \ \right\}.
\end{equation}
Therefore, for such edges and the choice $B_j=E_j$, where $E_k$, $k\in[n]$ are the canonical basic vectors in $\R^n$, (\ref{eq:tal_formula}) simplifies to
\begin{equation}
\label{eq:tal_formula_source}
\left(E_{e}\right)_{j}= \big(-1\big)^{N_{rj}}\ \frac{\sum_{F\in {\mathcal F}_{e,b_j}(\mathcal G)} \ w(F)}{\sum_{C\in {\mathcal C}(\mathcal G)} \ w(C)} =A^r_{j},
\end{equation}
where $A^r_j$ is the entry of the reduced row echelon matrix $A$ with respect to the base $I=\{1\le i_1 < i_2 < \cdots < i_k \le n\}$.
\end{corollary}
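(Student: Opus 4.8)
The plan is to prove Corollary \ref{cor:bound_source} in two stages: first establish that the exponent $\mbox{wind}(F)+\mbox{int}(F)$ is constant modulo $2$ across all edge flows $F$ from $b_{i_r}$ to $b_j$ and identify its value as $N_{rj}$; then deduce the formula (\ref{eq:tal_formula_source}) by specializing Theorem \ref{theo:null}. Since the starting edge $e$ is at a boundary source, the distinction between edge loop-erased walks and ordinary loop-erased walks disappears (by Remark \ref{rem:loop}), and by the final observation in Section \ref{sec:flows} the edge flows coincide exactly with Talaska's flows. Thus once the parity statement is settled, the denominator in (\ref{eq:tal_formula}) is Talaska's conservative-flow sum and the numerator, up to the global sign $(-1)^{N_{rj}}$, is Talaska's flow polynomial, so (\ref{eq:tal_formula_source}) reduces to the known identification $A^r_j = (-1)^{\sigma(i_r,j)} M_{ij}$ recalled in the boundary-measurement discussion of Section \ref{sec:def_edge_vectors}, with $\sigma(i_r,j)=N_{rj}$.

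\textbf{The parity computation is the heart of the matter.} The quantity $\mbox{wind}(F)+\mbox{int}(F)$ depends only on the underlying directed path $P_{b_{i_r},b_j}$ by Definition \ref{def:numb_flows}, so I would first reduce to showing that for a directed walk $P$ from the boundary source $b_{i_r}$ to the boundary sink $b_j$ the sum $\mbox{wind}(P)+\mbox{int}(P)$ is congruent modulo $2$ to $N_{rj}$, independently of $P$. The natural tool is the homotopy/continuous-deformation argument underlying Lemma \ref{lem:rotation}: any two such directed paths with common endpoints on the boundary differ by a sequence of local moves, and I would check that under each such move $\mbox{wind}(P)$ and $\mbox{int}(P)$ change by the same amount modulo $2$, so that their sum is a deformation invariant. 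Concretely, since $b_{i_r}$ and $b_j$ lie on a common boundary interval and all gauge rays $\mathfrak l_{i_s}$ emanate from the boundary sources in the fixed direction $\mathfrak l$, the total winding measures how many times the tangent turns to align with $\mathfrak l$, while $\mbox{int}(P)$ counts crossings with the rays; a crossing of ray $\mathfrak l_{i_s}$ is forced precisely when $i_s$ lies between $i_r$ and $j$, giving the count $N_{rj}$ in (\ref{eq:index_source}).

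\textbf{I expect the main obstacle} to be making the parity invariance rigorous while tracking windings and ray-intersections simultaneously, especially around the antiparallel and degenerate configurations handled by (\ref{eq:s_antipar}) and the non-generic positions excluded in Definition \ref{def:gauge_ray}. The cleanest route is likely a boundary-to-boundary version of the closed-loop parity fact already used at the end of the proof of Theorem \ref{theo:consist}, where the total winding of a simple cycle is $1 \pmod 2$ and its total intersection number is $0 \pmod 2$: concatenating $P$ with the reverse of a reference path yields a closed loop whose enclosed boundary sources are exactly those counted by $N_{rj}$, and a Gauss-type count relates the winding of this loop to the parity of rays it must cross. I would verify that changing $P$ changes this loop by simple cycles and by moves that preserve $\mbox{wind}+\mbox{int} \pmod 2$, which pins the exponent to $N_{rj}$ for every $P$. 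Once this invariant is in hand, all signs factor out of the summation in (\ref{eq:tal_formula}), the subtraction-free structure is inherited directly from Theorem \ref{theo:null}, and the final equality with $A^r_j$ follows by matching with the boundary measurement matrix entries recalled from \cite{Pos, Tal2}.
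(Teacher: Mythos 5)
Your two-stage skeleton (parity claim first, then specialization of Theorem \ref{theo:null} and matching with the Postnikov/Talaska boundary measurement entries, using $\sigma(i_r,j)=N_{rj}$) is exactly the paper's, and your reduction to loop-erased walks, which all share their first and last edges since boundary vertices have degree one, is also correct. The genuine gap is in the mechanism you propose for the parity invariance. You close $\mathcal P$ against a reference path and then claim that changing $\mathcal P$ ``changes this loop by simple cycles and by moves that preserve $\mbox{wind}+\mbox{int}$ (mod $2$)''. This step fails as stated: two loop-erased walks $\mathcal P$, $\tilde{\mathcal P}$ from $b_{i_r}$ to $b_j$ are both acyclic, so their ``difference'' consists of closed curves each of which traverses a piece of $\tilde{\mathcal P}$ \emph{backwards}; these are not simple cycles (conservative flows) of the directed network, and the winding of a reversed pair of edges is not congruent mod $2$ to that of the forward pair --- it differs by half-plane correction terms, which is precisely the content of identity (\ref{eq:a10}) in the Appendix. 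Moreover, the closed-curve fact ``$\mbox{wind}\equiv 1$ (mod $2$)'' holds only for \emph{simple} closed curves, while your difference loop can have many self-crossings. The paper avoids comparing two loops altogether: it closes each path \emph{separately}, once with the chord $e_{j,i_r}$ to control the winding and once with a boundary-hugging auxiliary path $Q$ to control the intersections. Each closure is then a simple closed curve (a loop-erased walk from a boundary source is a simple path), so ``$\mbox{wind}\equiv 1$'' and ``total ray-crossing number $\equiv 0$'' (mod $2$) apply curve by curve, and the correction terms (the junction windings and $\mbox{int}(Q)$) depend only on the shared first and last edges, hence are identical for all paths.

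The second gap concerns the exact value. Your assertion that ``a crossing of ray $\mathfrak l_{i_s}$ is forced precisely when $i_s$ lies between $i_r$ and $j$'' yields $N_{rj}$ only after one accounts for the ray $\mathfrak l_{i_r}$ based at the source itself: since $\mathcal P$ starts at $b_{i_r}$, that ray may cross $\mathcal P$ an odd number of times, namely when $\mathfrak l_{i_r}$ lies inside the angle $\widehat{-e_{j,i_r},e}$, and $N_{rj}$ does not count $i_r$. In the paper this extra crossing parity is exactly offset by the winding correction $\mbox{wind}(e_{j,i_r},e)$, which equals $1$ or $0$ in the complementary cases; without exhibiting this compensation, a naive count can come out as $N_{rj}+1$ in some configurations. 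So the plan must be repaired at both points --- replace the difference-loop invariance by the per-path closure argument, and add the $\mathfrak l_{i_r}$/winding compensation --- and these two repairs constitute essentially all of the actual work in the paper's proof.
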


\begin{proof}
First of all, in this case, each edge flow $F$ from $i_r$ to $j$ is either an acyclic edge loop--erased walk $\mathcal P$ or the union of $\mathcal P$ with a conservative flow $C$ with no common edges with $\mathcal P$. Therefore to prove that the number $\mbox{wind}(F)+\mbox{int}(F)$ has the same parity for all $F$ is equivalent to prove that $\mbox{wind}(\mathcal P)+\mbox{int}(\mathcal P)$ has the same parity for all edge loop--erased walks from $b_{i_r}$ to $b_j$ (again notations are as in Definitions~\ref{def:edge_flow},~\ref{def:cons_flow} and~\ref{def:numb_flows}).   
Any two such loop erased walks, ${\mathcal P}$ and ${\tilde {\mathcal P}}$, share at least the initial and the final edges and are both acyclic.
\begin{figure}
  \centering
	{\includegraphics[width=0.3\textwidth]{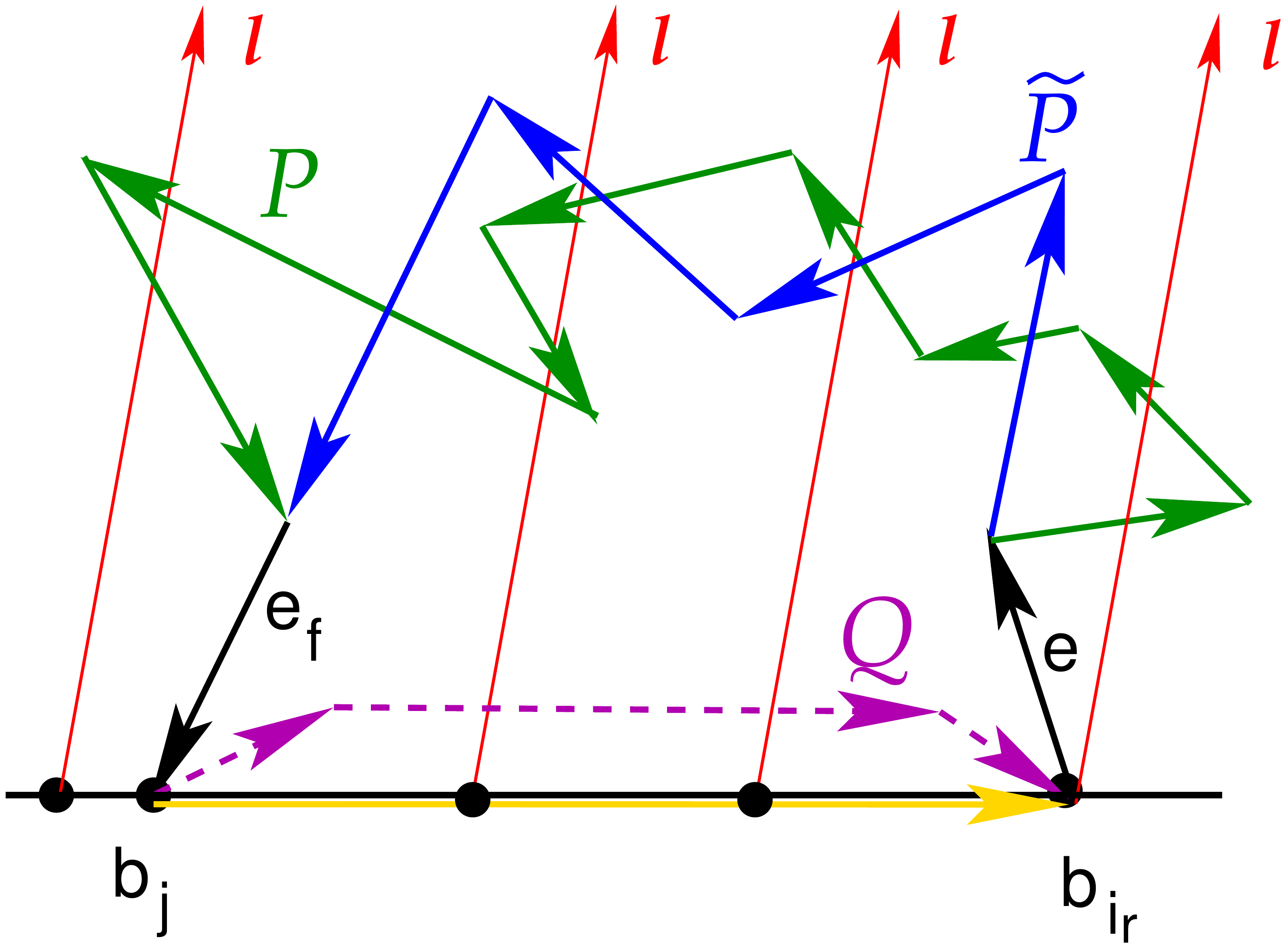} \hspace{1cm} \includegraphics[width=0.3\textwidth]{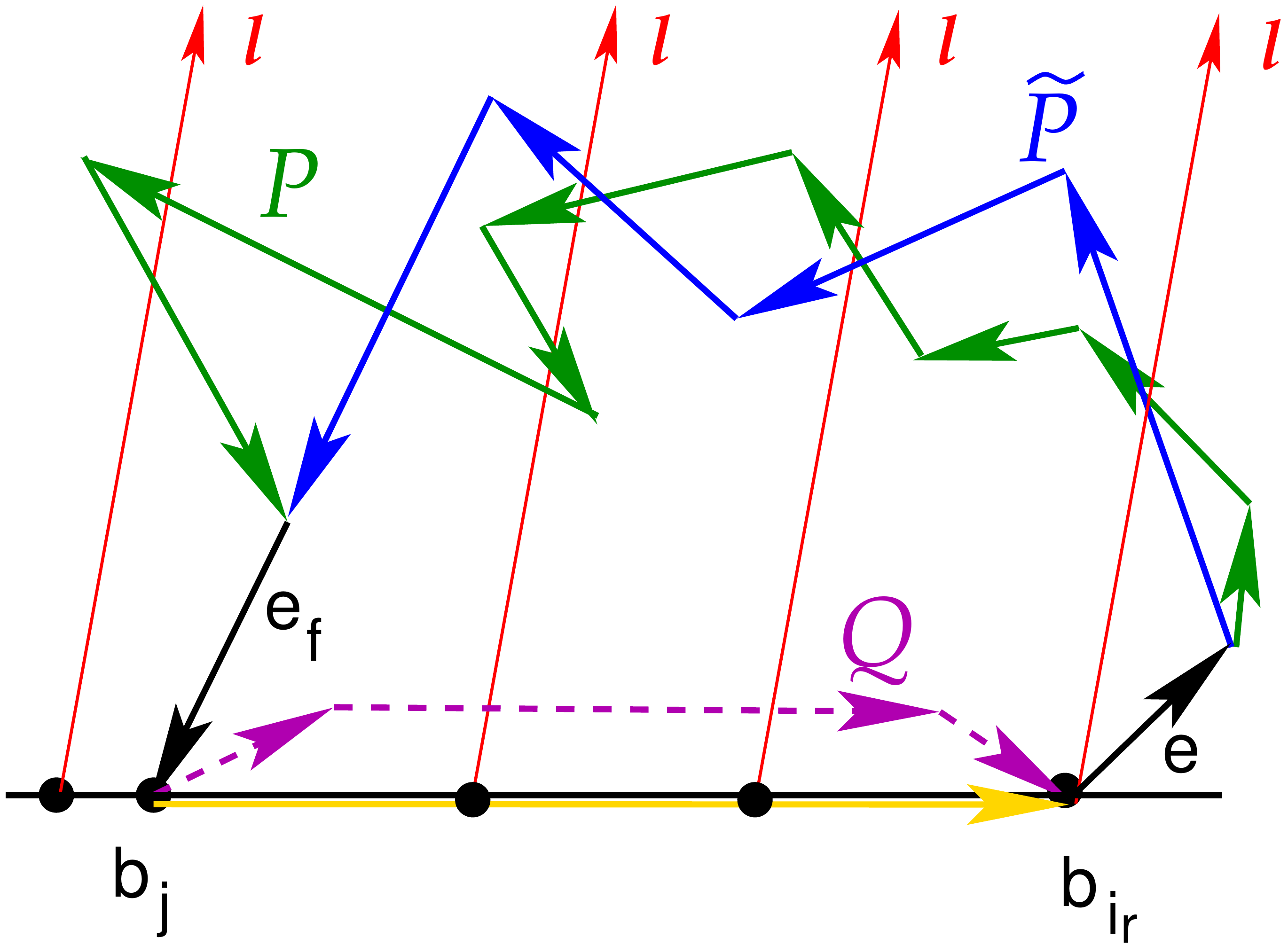}}
  \caption{\small{\sl We illustrate the proof of Corollary~\ref{cor:bound_source}. The path $\mathcal P$ is the union of black and green edges, the path $\tilde{\mathcal P}$ is the union of black and blue edges. Additional path $Q$ is drawn magenta, and the edge  $e_{j,i_r}$  is drawn gold. On the left (respectively right) the gauge ray starting at  $b_{i_r}$ lies outside (respectively inside) the angle $\widehat{-e_{j,i_r},e}$.}\label{fig:winding_intersection}}
\end{figure}
If we add an edge $e_{j,i_r}$ from $b_j$ to $b_{i_r}$ (see Fig.~\ref{fig:winding_intersection}), we obtain a pair of simple cycles with the same orientation and of total winding equal to $1$ modulo $2$. Therefore 
\begin{equation}
\label{eq:2wind}  
\mbox{wind} (P) = \mbox{wind} (\tilde P)  = 1 - \mbox{wind}(e_{f},e_{j,i_r}) - \mbox{wind}(e_{j,i_r},e) \quad (\!\!\!\!\!\!\mod 2). 
\end{equation}
Moreover, $\mbox{wind}(e_{f},e_{j,i_r})=0$ and  
$$
\mbox{wind}(e_{j,i_r},e) =\left\{\begin{array}{ll} 1 & \mbox{if the gauge ray}\ \ {\mathfrak l}_{i_r} \ \ \mbox{lies outside the angle} \ \ \widehat{-e_{j,i_r},e}   \\
     0 & \mbox{if the gauge ray}\ \ {\mathfrak l}_{i_r} \ \ \mbox{lies inside the angle} \ \ \widehat{-e_{j,i_r},e}                                 
    \end{array}\right.
$$
Therefore, in the first case $\mbox{wind} (P) = \mbox{wind} (\tilde P)  = 0 \quad (\!\!\!\!\mod 2)$ and in the second case
$\mbox{wind} (P) = \mbox{wind} (\tilde P)  = 1 \quad (\!\!\!\!\mod 2)$.

Next, let us add a directed path $Q$ from $b_j$ to  $b_{i_r}$ very close to the boundary to the graph (see Fig~\ref{fig:winding_intersection}). Then the total intersection number of the simple cycles $Q\cup P$, $Q\cup \tilde P$ are both zero $(\!\!\!\!\mod 2)$ and we easily conclude that $\mbox{int} ({\mathcal P}) =\mbox{int} ({\tilde {\mathcal P}}) \, (\!\!\!\!\mod 2)$.

Without loss of generality, we may assume that $i_r<j$. Since $\mathcal P$ is acyclic, all pivot rays ${\mathfrak l}_{i_l}$, $i_l\in [i_r -1] \cup [j, n]$ intersect $\mathcal P$ an even number of times, whereas all pivot rays ${\mathfrak l}_{i_l}$, $i_l\in [i_r +1, j]$ intersect $\mathcal P$ an odd number of times. Finally, the ray ${\mathfrak l}_{i_r}$ intersects $\mathcal P$  an even (odd) number of times if the gauge ray ${\mathfrak l}_{i_r}$ lies outside (inside) the angle $\widehat{-e_{j,i_r},e}$. Therefore,  $\mbox{wind}(F)+\mbox{int}(F)$ is equal to the number $N_{rj}$ of sources in the interval $]i_r,j[$  (\ref{eq:index_source}), and the sum in the right hand side in (\ref{eq:tal_formula_source}) coincides  with the formula in \cite{Tal2}.
\end{proof}

\begin{figure}
  \centering
	{\includegraphics[width=0.4\textwidth]{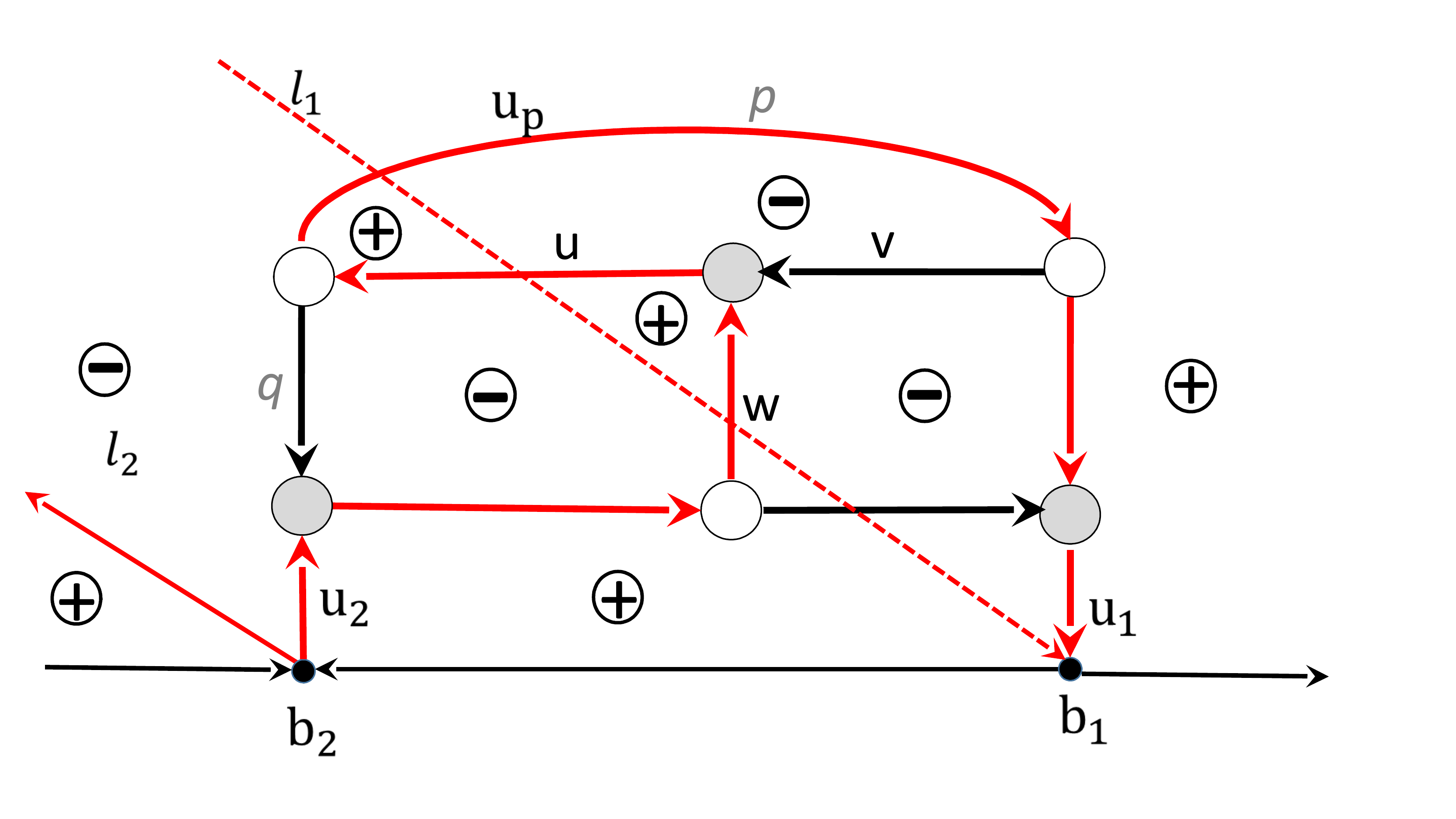}}
	\hspace{1. truecm}
	{\includegraphics[width=0.4\textwidth]{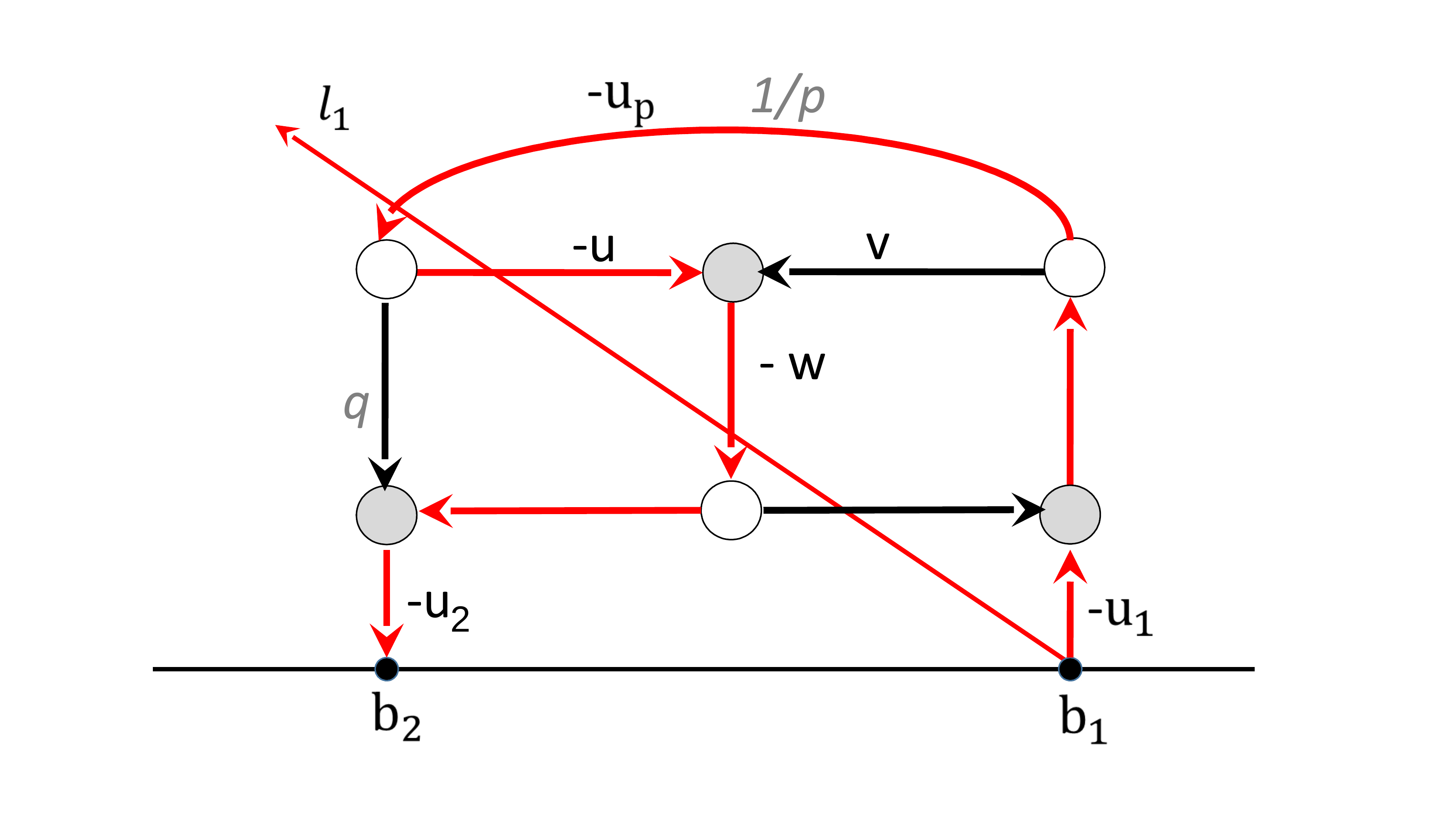}}
  \caption{\small{\sl The computation of edge vectors using Theorem \ref{theo:null} and Lemma \ref{lemma:path}. The path along which we change orientation is colored red in both Figures; we mark regions to compute the indices in (\ref{eq:eps_not_path}) and  (\ref{eq:eps_on_path})  [left].}\label{fig:talaska_orientation}}
\end{figure}

\begin{example}
For the orientation and gauge ray direction as in Figure \ref{fig:Rules0}, the vectors $E_e$ on the Le--network coincide with those introduced
in the direct algebraic construction in \cite{AG3}.
\end{example}

\begin{example}\label{example:null}
We illustrate both the Theorem and the Corollary on the example in Figure \ref{fig:talaska_orientation} [left]. The network represents the point $[ 2p/(1+p+q),1] \in Gr^{\mbox{\tiny TP}}(1,2)$: all weights are equal to 1 except for the two edges carrying the positive weights $p$ and $q$. In the given orientation, the networks possesses two conservative flows of weight $p$ and $q$. Therefore $\sum\limits_{C\in {\mathcal C}(\mathcal G)} \ w(C)=1+p+q$. There are two possible loop erased edge walks starting at $u$, which coincide with the edge flows from $u$, so that $\sum\limits_{F\in {\mathcal F}_{u,b_1}(\mathcal G)} \big(-1\big)^{\mbox{wind}(F)+\mbox{int}(F)}\ w(F) = q-p$.
Therefore on the edges $u,v,w$, using (\ref{eq:tal_formula}) we get
$$
E_u=E_v=-E_w=\left( \frac{q-p}{1+p+q}, 0\right).
$$
We remark that $E_u=E_v=E_w=(0,0)$, when $p=q$, that is null edge vectors are possible even if there exist paths starting at the edge and ending at some boundary sink. We shall return on the problem of null edge vectors in Section \ref{sec:null_vectors}.
It is easy to check that all other edge vectors associated to such network have non--zero first component for any choice of $p,q>0$. In particular the edge vector at the boundary source is equal to
$E_{u_2} =(\frac{1+2p}{1+p+q},0)$, since there are two loop erased walks starting from $u_2$ and three edge flows so that $\sum\limits_{F\in {\mathcal F}_{u_2,b_1}(\mathcal G)} \big(-1\big)^{\mbox{wind}(F)+\mbox{int}(F)}\ w(F) = 1(1+p)+p$.
The edge vector $E_{u_p} = (\frac{p+2pq}{1+p+q},0)$ since there are two loop erased walks and three edge flows starting at $u_p$.
Similarly $E_{u_1} =(1,0)$ since there is only one loop erased walk and three edge flows from $u_1$.
Finally the representative matrix associated to this system of vectors is $A[1] = (\frac{1+2p}{1+p+q}, 1)$.
\end{example}

\section{Dependence of edge vectors on orientation and network gauge freedoms}\label{sec:vector_changes}

In this section we discuss the dependence of edge vectors on the various gauge freedoms of the network.

\subsection{Dependence of edge vectors on gauge ray direction $\mathfrak l$}\label{sec:gauge_ray}
We show that the effect of a change of direction in the gauge ray $\mathfrak l$ on the vectors $E_e$ is the following: the new vectors  $E^{\prime}_e$ coincide with the old ones  $E_e$ up to a sign, and the boundary measurement matrix is preserved.

\begin{figure}
  \centering
	{\includegraphics[width=0.5\textwidth]{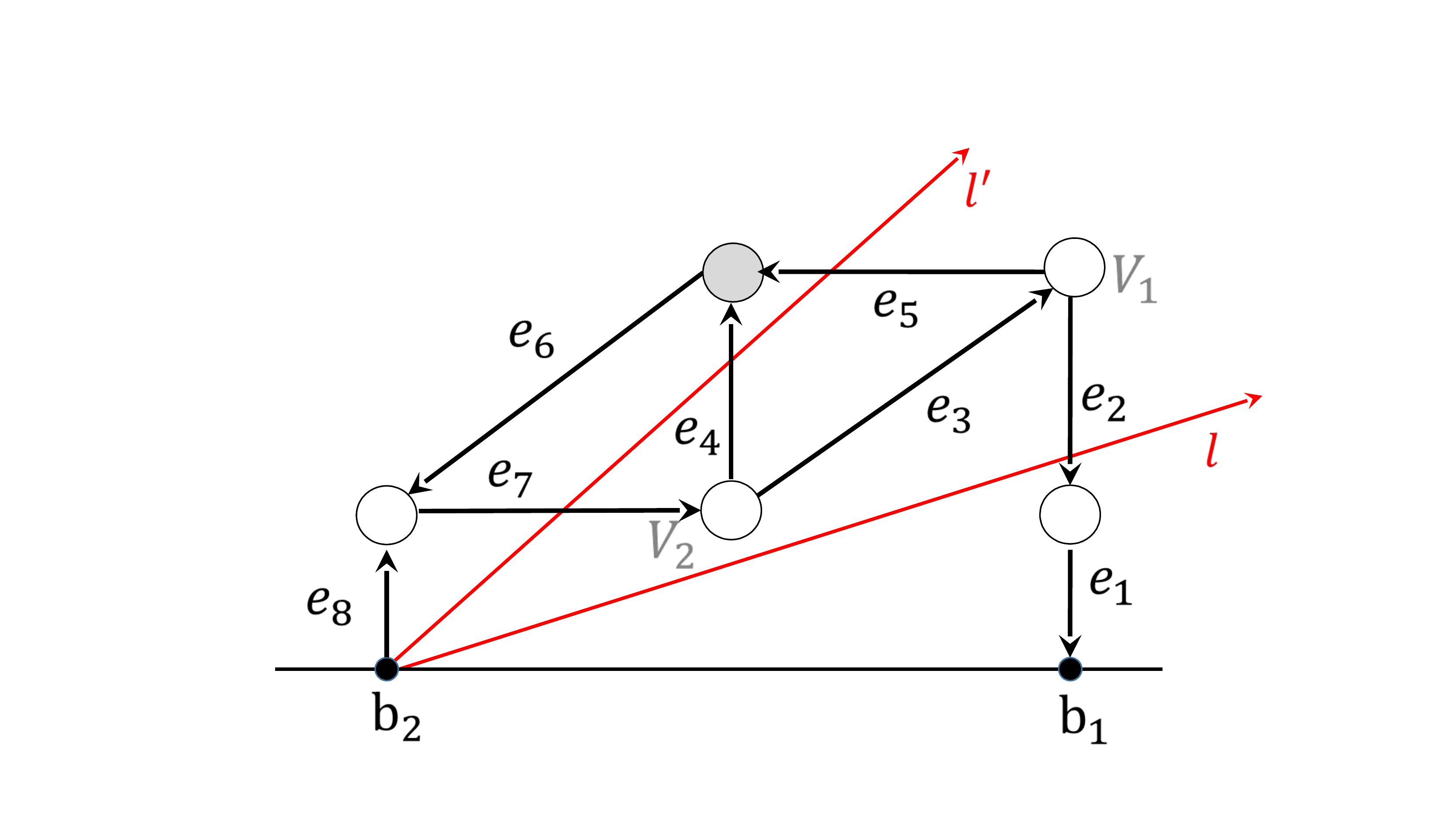}}
	\vspace{-.7 truecm}
  \caption{\small{\sl We illustrate Proposition \ref{prop:rays}.}}\label{fig:pivot}
\end{figure}

\begin{proposition}\label{prop:rays}\textbf{The dependence of the system of vectors on the ray direction $\mathfrak l$}
Let $({\mathcal N},\mathcal O)$ be an oriented network and consider two gauge directions $\mathfrak l$ and 
$\mathfrak{l}^{\prime}$.
\begin{enumerate}
\item For any boundary source edge $e_{i_r}$ the vector $E_{e_{i_r}}$ does not depend on the gauge direction $\mathfrak l$ 
and it coincides with the $r$-th row of the generalized RREF of $[A]$, associated to the pivot set $I$,  minus  the $i_r$--th vector of the canonical basis, which we denote $E_{i_r}$,
\begin{equation}
\label{eq:Ei}
E_{e_{i_r}} = A[r] -E_{i_r}.
\end{equation}
\item For any other edge $e$ we have
\begin{equation}\label{eq:vector_ray}
E^{\prime}_e = (-1)^{\mbox{cr}(V_e)+\mbox{par(e)}} E_e,
\end{equation} 
where $E_e$ and  $E^{\prime}_e$ respectively are the edge vectors for $e$ for the gauge direction ${\mathfrak l}$
and ${\mathfrak l}^{\prime}$, $\mbox{par(e)}$ is 1 if $e$ belongs to the angle $\widehat{{\mathfrak l},{\mathfrak l}^{\prime}}$ , and 0 otherwise,
whereas $\mbox{cr}(V_e)$ denotes the number of gauge rays passing the initial vertex $V_e$ of $e$ during the rotation from  ${\mathfrak l}$ to ${\mathfrak l}^{\prime}$ inside the disk.
\end{enumerate}
\end{proposition}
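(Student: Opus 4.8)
The plan is to treat the two parts separately: part (1) will follow essentially for free from Corollary~\ref{cor:bound_source}, while part (2) will be proved path by path, by tracking the parity of $\mbox{wind}(\mathcal P)+\mbox{int}(\mathcal P)$ as the gauge direction is rotated from $\mathfrak{l}$ to $\mathfrak{l}^{\prime}$.

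For part (1) I would argue as follows. By the remark following Definition~\ref{def:edge_vector}, every component of $E_{e_{i_r}}$ indexed by a boundary source vanishes, while Corollary~\ref{cor:bound_source} identifies the sink component $(E_{e_{i_r}})_j$ with the matrix entry $A^r_j=(-1)^{N_{rj}}\big(\sum_F w(F)\big)/\big(\sum_C w(C)\big)$. Neither the index $N_{rj}$ (the number of sources strictly between $i_r$ and $j$) nor the weight ratio depends on $\mathfrak{l}$, and together with the pivot normalization these are exactly the entries of $A[r]-E_{i_r}$. Hence $E_{e_{i_r}}$ is $\mathfrak{l}$-independent and equals $A[r]-E_{i_r}$, which is (\ref{eq:Ei}).

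For part (2), fix an edge $e$ not at a boundary source, a directed path $\mathcal P=(e=e_1,e_2,\dots,e_m)$ from $e$ to some sink $b_j$, and rotate $\mathfrak{l}$ to $\mathfrak{l}^{\prime}$ within the admissible inward-pointing cone. Since $w(\mathcal P)$ is gauge-independent, by (\ref{eq:sum}) it suffices to show that the parity of $\mbox{wind}(\mathcal P)+\mbox{int}(\mathcal P)$ changes by $\mbox{cr}(V_e)+\mbox{par}(e)$ for every such $\mathcal P$, uniformly in $\mathcal P$ and in $j$; the whole edge vector then rescales by the common sign $(-1)^{\mbox{cr}(V_e)+\mbox{par}(e)}$. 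For the winding I would invoke Lemma~\ref{lem:rotation}(1): as $\mathfrak{l}$ sweeps $\widehat{\mathfrak{l},\mathfrak{l}^{\prime}}$ the term $\mbox{wind}(e_s,e_{s+1})$ toggles each time $\mathfrak{l}$ passes the oriented direction of $e_s$ or $e_{s+1}$. Accounting by edge, a direction occurring at an internal position of the sequence lies in two consecutive pairs and contributes an even total, so only the first and last positions can contribute oddly, each contributing $1$ exactly when its direction lies in the sector. The first position gives $\mbox{par}(e)$; the last edge $e_m$ runs from an interior vertex to the boundary sink $b_j$ and therefore points into the outward half-plane bounded by the common boundary interval, whereas $\widehat{\mathfrak{l},\mathfrak{l}^{\prime}}$ lies in the inward half-plane, so $e_m$ contributes $0$. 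Thus $\mbox{wind}(\mathcal P)$ changes by $\mbox{par}(e)$ modulo $2$.

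For the intersection number, the key observation is that $\mbox{int}(e_s)$ relative to a fixed source ray $\mathfrak{l}_{i_r}$ toggles precisely when $\mathfrak{l}_{i_r}$ passes an endpoint of $e_s$; hence when $\mathfrak{l}_{i_r}$ sweeps across a vertex $V$ it toggles the intersection of every path-edge incident to $V$. At every vertex other than $V_e$ the incidence count of $\mathcal P$ is even, since each passage through an interior vertex and each repeated traversal contributes one incoming and one outgoing slot, while at $V_e$ it is odd because the path departs along $e$ with no matching arrival, and the sink $b_j$ is never met by an inward ray. Summing over the $k$ source rays, $\mbox{int}(\mathcal P)$ changes by $\mbox{cr}(V_e)$ modulo $2$, and combining with the winding yields the claimed uniform sign. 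I expect the main obstacle to be the bookkeeping for paths revisiting $V_e$ or repeating edges: one must check that such visits enter in balanced in/out pairs so that only the initial departure at $V_e$ and the terminal arrival at $b_j$ stay unbalanced, and that the gauge-invariant parities of the cyclic parts (total winding $\equiv 1$ and total intersection $\equiv 0$ on each simple cycle) do not disturb the count. The geometric crux is the last-edge winding cancellation, which relies on the inward/outward half-plane dichotomy forced by placing all boundary vertices on a common interval.
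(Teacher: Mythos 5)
Your proposal is correct and takes essentially the same approach as the paper: part (1) is deduced from Corollary \ref{cor:bound_source}, and part (2) is proved by continuously rotating the gauge direction while tracking the parity of $\mbox{wind}(\mathcal P)+\mbox{int}(\mathcal P)$, with the winding parity toggling only when the rotating direction crosses the initial edge (alignment with the sink edge being impossible by the inward/outward half-plane dichotomy) and the intersection parity toggling only when a gauge ray sweeps across the initial vertex. Your explicit double-counting (internal edge occurrences lying in two consecutive pairs, internal vertices having even incidence) simply spells out the cancellations that the paper asserts without detail.
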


\begin{proof}
Formula (\ref{eq:Ei}) follows from Corollary~\ref{cor:bound_source}: indeed (\ref{eq:index_source}) implies that the components of $E_{e_{\bar i}}$ are invariant with 
respect to changes of the gauge direction. Finally, since there is no path to the boundary source $b_{\bar i}$, 
the corresponding component of the edge vector is zero. 

To prove the second statement, we show that, for a given initial edge $e$, the sign contribution of each edge loop--erased walk $\mathcal P$ starting at $e$ is either the same before and after the gauge ray rotation or changes in the same way for every walk independently of the destination $b_j$.

Indeed let us consider a monotone continuous change of the gauge direction from initial $\mathfrak l = \mathfrak l(0)$ to final ${\mathfrak l}'=\mathfrak l(1)$. If for some $t\in(0,1)$ the vector $\mathfrak l(t)$ forms a zero angle with an edge of $\mathcal P$ distinct from the initial one, the parity of the  winding number of $\mathcal P$ remains unchanged. On the contrary, if $\mathfrak l(t)$ forms a zero angle with the initial edge $e$, the winding number of $\mathcal P$ changes its parity, and, in such case we settle $\mbox{par(e)}=1$. We remark that $\mathfrak l(t)$ can never form a zero angle with the edge at the boundary sink in $\mathcal P$. 

Similarly, if one of the gauge lines passes through a vertex in $\mathcal P$ distinct from the initial vertex, then the parity of the intersection number of $\mathcal P$ remains unchanged. It changes  $1\  (\!\!\!\!\mod 2)$ only if one of the gauge rays passes through the initial vertex of $\mathcal P$ (again it can never pass through the final vertex).    

Since the first edge $e$ and its initial vertex are common to all paths starting at $e$, all components of the 
vector $E_e$ either remain invariant, or are simultaneously multiplied by $-1$.
\end{proof}

\begin{example}
We illustrate Proposition \ref{prop:rays} in Figure \ref{fig:pivot}. In the rotation from $\mathfrak{l}$ to $\mathfrak{l}^{\prime}$ inside the disk, the gauge ray starting at $b_2$ passes the vertices $V_1$ and $V_2$ and the direction $e_3$. Therefore
$E^{\prime}_{e_i} = - E_{e_i}$, for $i=2,4,5$, whereas $E^{\prime}_{e_i} = E_{e_i}$ for all other edges.
\end{example}

\subsection{Dependence of edge vectors on orientation of the graph}\label{sec:orient}
We now explain how the system of vectors changes when we change the orientation of the graph. Following \cite{Pos}, a change of orientation can be represented as a finite composition of elementary changes of orientation, each one consisting in
a change of orientation either along a simple cycle ${\mathcal Q}_0$ or
along a non-self-intersecting oriented path ${\mathcal P}$ from a boundary source $i_0$ to a boundary sink $j_0$.
Here we use the standard rule that we do not change the edge weight if the edge does not change orientation, otherwise we replace the original weight by its reciprocal. 

\begin{theorem}\label{theo:orient}\textbf{The dependence of the system of vectors on the orientation of the network.}
Let ${\mathcal N}$ be a plabic network representing a given point $[A]\in \S \subset\GTNN$ and $\mathfrak l$ be a gauge ray direction. Let $\mathcal O$, ${\hat {\mathcal O}}$ be
two perfect orientations of ${\mathcal N}$ for the bases $I,I^{\prime}\in {\mathcal M}$. Let $A[r]$, $r\in [k]$, denote 
the $r$-th row of a chosen representative matrix of $[A]$.
Let $E_e$ be the system of 
vectors associated to $({\mathcal N},\mathcal O, \mathfrak l)$ and satisfying the boundary conditions $E[j]$ at $b_j$, $j \in \bar I$, whereas $\hat E_e$ are those associated to $({\mathcal N},{\hat {\mathcal O}}, \mathfrak l)$ and satisfying the boundary conditions $E[l]$ at $b_l$, $l \in \bar I^{\prime}$.  
Then for any $e\in {\mathcal N}$, there exist real constants $\alpha_e\ne 0$, $c^r_e$, $r\in [k]$ such that
\begin{equation}\label{eq:orient}
\hat E_e = \alpha_e E_e + \sum_{r=1}^k c^r_e A[r],
\end{equation}
where for elementary transformations $\alpha_e$ are as in (\ref{eq:hat_E_P}), (\ref{eq:hat_E_Q}).
\end{theorem}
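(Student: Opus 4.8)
The plan is to reduce the statement to the two elementary changes of orientation and treat each in turn. I first observe that, since the $k$ rows $A[r]$ span the fixed $k$-dimensional row space $W := \langle A[1],\dots,A[k]\rangle$ representing the point $[A]$, relation (\ref{eq:orient}) is equivalent to the congruence $\hat E_e \equiv \alpha_e E_e \pmod{W}$ with $\alpha_e\neq 0$. The advantage of this reformulation is that it is manifestly multiplicative under composition: if $\hat E_e \equiv \alpha_e E_e$ and $\tilde E_e \equiv \beta_e \hat E_e$ modulo the \emph{same} subspace $W$, then $\tilde E_e \equiv \beta_e\alpha_e E_e \pmod W$, and $\beta_e\alpha_e\neq 0$. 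Since $W$ is intrinsic to $[A]$ and does not depend on the chosen orientation, the congruence passes through any finite composition of moves. By \cite{Pos} every change of perfect orientation is such a composition of reversals along a simple cycle $\mathcal Q_0$ and reversals along a non-self-intersecting path $\mathcal P$ from a boundary source to a boundary sink, so it suffices to prove (\ref{eq:orient}) for each of these two elementary moves and to exhibit the explicit factors (\ref{eq:hat_E_Q}), (\ref{eq:hat_E_P}).

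For both moves the mechanism is the same: the reversed edges change orientation and have their weights inverted, while all other edges are left untouched; I then compare the path expansion (\ref{eq:sum}) of $\hat E_e$ with that of $E_e$, organizing walks in the new orientation by how they meet the reversed locus. A walk avoiding the reversed edges is simultaneously a walk in both orientations, whereas a walk that enters the reversed locus must traverse it backwards, and splicing these backward excursions sets up the correspondence of path sums. To make this rigorous I would pass to the flow description of Theorem \ref{theo:null}, replacing the infinite path sums by the finite sums over edge and conservative flows and carrying out the reshuffling there. For the cycle reversal $\mathcal Q_0$ the source set is preserved, $I'=I$, so the sinks, the boundary conditions, and the rows $A[r]$ are unchanged; here I expect the correspondence to produce a pure rescaling $\hat E_e=\alpha_e E_e$, with $\alpha_e=1$ for edges whose forward cones are unaffected by $\mathcal Q_0$ and a factor governed by the weight of $\mathcal Q_0$ (formula (\ref{eq:hat_E_Q})) for the edges feeding into it. For the path reversal $\mathcal P\colon i_0\to j_0$ the source set changes by $I'=(I\setminus\{i_0\})\cup\{j_0\}$, the roles of $b_{i_0}$ and $b_{j_0}$ interchange, and it is precisely the backward excursions along $\mathcal P$ that, after resummation, contribute a vector lying in $W$; expressing that vector in the basis $\{A[r]\}$ yields the coefficients $c^r_e$ and the factor $\alpha_e$ of (\ref{eq:hat_E_P}). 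Throughout I must track the two parity ledgers attached to every sign, the generalized winding $\mathrm{wind}$ and the intersection number $\mathrm{int}$ relative to the \emph{fixed} gauge $\mathfrak l$: reversing an edge reverses its tangent direction and hence alters the local winding contributions of Definition \ref{def:winding_pair} and the intersection counts, but these alterations are localized on the reversed locus and are common to all walks issuing from a given initial edge, which is exactly what lets the global scalar $\alpha_e$ be factored out.

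The main obstacle is the path case: one must verify that the resummed backward-excursion contribution lands exactly in $W=\langle A[1],\dots,A[k]\rangle$ rather than in an arbitrary subspace of $\R^n$, and pin down its expansion in the basis $\{A[r]\}$, while simultaneously controlling the weight reciprocals and the winding/intersection parities so that a single nonzero scalar $\alpha_e$ survives in front of $E_e$. The identification of the excursion contribution with the span of the rows is the crux, and here Corollary \ref{cor:bound_source}, which identifies the boundary-source edge vector as $A[r]-E_{i_r}$, does the decisive work: it recognizes the contributions routed through the new source $b_{i_0}$ as Grassmannian-valued rows, not as generic vectors. Finally, verifying $\alpha_e\neq 0$ reduces to checking that the leading, shortest-excursion term does not cancel, which follows from positivity of the weights together with the subtraction-free denominator in (\ref{eq:tal_formula}).
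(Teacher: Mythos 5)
Your opening reduction is fine and is in fact the paper's entire proof of Theorem \ref{theo:orient} proper: the paper also reduces to the two elementary reversals and defers to Lemmas \ref{lemma:path} and \ref{lemma:cycle}; your congruence-mod-$W$ phrasing (with $W=\langle A[1],\dots,A[k]\rangle$) is a clean way to justify why the composition goes through even though the base and the boundary conditions change at each intermediate step. But the substance of the theorem is the elementary cases themselves, since the statement quotes the explicit factors (\ref{eq:hat_E_P}), (\ref{eq:hat_E_Q}), and there your route diverges from the paper's and is not carried out. The paper never resums walks: it modifies the boundary condition at $b_{j_0}$ in the \emph{old} orientation to $E_{j_0}-\frac{1}{A^{r_0}_{j_0}}A[r_0]$, uses linearity of the vertex system together with its full rank (Theorem \ref{theo:consist}) to conclude that $\tilde E_e - E_e$ is proportional to $A[r_0]$ at \emph{every} edge, then writes down the explicit candidate $\hat E_e = (-1)^{\gamma(e)}\tilde E_e$ (with an extra factor $1/w_e$ on edges of $\mathcal P_0$ or $\mathcal Q_0$) and verifies vertex-by-vertex (Appendix \ref{app:orient}) that this candidate solves the linear relations of the reversed network; uniqueness then identifies it with the new edge-vector system. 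All combinatorial difficulty is thus local, at single vertices.

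Your plan replaces this by a global splicing/resummation of walks or flows, and the steps that would carry the load are asserted, not proven — these are genuine gaps. First, for the cycle case, ``I expect the correspondence to produce a pure rescaling'' is not an argument: in the flow picture of Theorem \ref{theo:null} the old and new denominators are \emph{different} polynomials (e.g. $\mathcal Q_0$ contributes $w(\mathcal Q_0)$ to one and $1/w(\mathcal Q_0)$ to the other), so $\hat E_e=\pm E_e$ or $\pm E_e/w_e$ is a nontrivial rational identity requiring a sign- and weight-controlled bijection or involution that you never construct. Second, your key heuristic — that the sign/weight alterations are ``common to all walks issuing from a given initial edge,'' allowing $\alpha_e$ to be factored out — is false as stated: walks from the same $e$ that avoid the reversed locus and walks that make excursions along it acquire different alterations (different numbers of reciprocal weights, different winding/intersection changes); this is precisely why the path case produces a correction in $W$ rather than a pure rescaling, and why in the paper the uniform factor $(-1)^{\gamma(e)}/w_e$ multiplies $\tilde E_e$ (the modified-boundary system), not $E_e$. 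Third, Corollary \ref{cor:bound_source} cannot by itself place the excursion contribution in $W$: it concerns edge vectors at boundary source edges in a \emph{fixed} orientation, so to invoke it you would already need the splicing identity relating new-orientation sums through $b_{i_0}$ (which is the new \emph{sink}, not ``the new source'' as you write) to old-orientation sums from $b_{i_0}$ — that identity is the crux you were supposed to prove. Finally, $\alpha_e\neq 0$ is not a matter of a leading term failing to cancel; once the explicit formulas (\ref{eq:hat_E_P}), (\ref{eq:hat_E_Q}) are established, $\alpha_e$ equals $(-1)^{\gamma(e)}$ or $(-1)^{\gamma(e)}/w_e$ and is nonzero by inspection. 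In short: you name the right target formulas, but the central step that proves them is missing, and the heuristics offered in its place do not survive scrutiny.
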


\begin{proof}
    It is sufficient to prove this statement in the case of elementary changes of orientation (see Lemmas~\ref{lemma:path} and \ref{lemma:cycle} below). Indeed, a generic change of orientation is represented by the composition of a finite set of such elementary transformations, and the resulting system of vectors does not depend on the sequence of transformations.
\end{proof}

Both in the case of an elementary change of orientation along a non-self-intersecting directed path $\mathcal P_0$ from a boundary source to a boundary sink or along a simple cycle ${\mathcal Q}_0$, we provide the explicit relation between the edge vectors in the two orientations.

Given an elementary change of orientation, we assign an index ${\gamma}(e)$ to each edge of the network in its \textbf{initial orientation}.

First, we mark all regions of the disk by either $+$ or $-$ using the following rule.
\begin{enumerate}
\item If $\mathcal P_0$ is a non-self-intersecting oriented path from a boundary source $i_0$ to a boundary sink $j_0$ in the initial orientation of ${\mathcal N}$, we divide the interior of the disk into a finite number of regions bounded by the gauge ray ${\mathfrak l}_{i_0}$ oriented 
upwards, the gauge ray ${\mathfrak l}_{j_0}$ oriented downwards, the path $\mathcal P_0$ oriented as in $({\mathcal N},\mathcal O,\mathfrak l)$ and the boundary of the disk divided into two arcs, each oriented from $j_0$ to $i_0$. Then we mark a region with a $+$ if its boundary is
oriented, otherwise mark it with $-$ (see Figure~\ref{fig:inv_symb}).
\item If $\mathcal Q_0$ is a closed oriented simple path, it  divides the interior of the disk into two regions: we mark the region external to $\mathcal Q_0$ with a $+$ and the internal region with $-$. 
\end{enumerate}

\begin{figure}
  \centering
	{\includegraphics[width=0.4\textwidth]{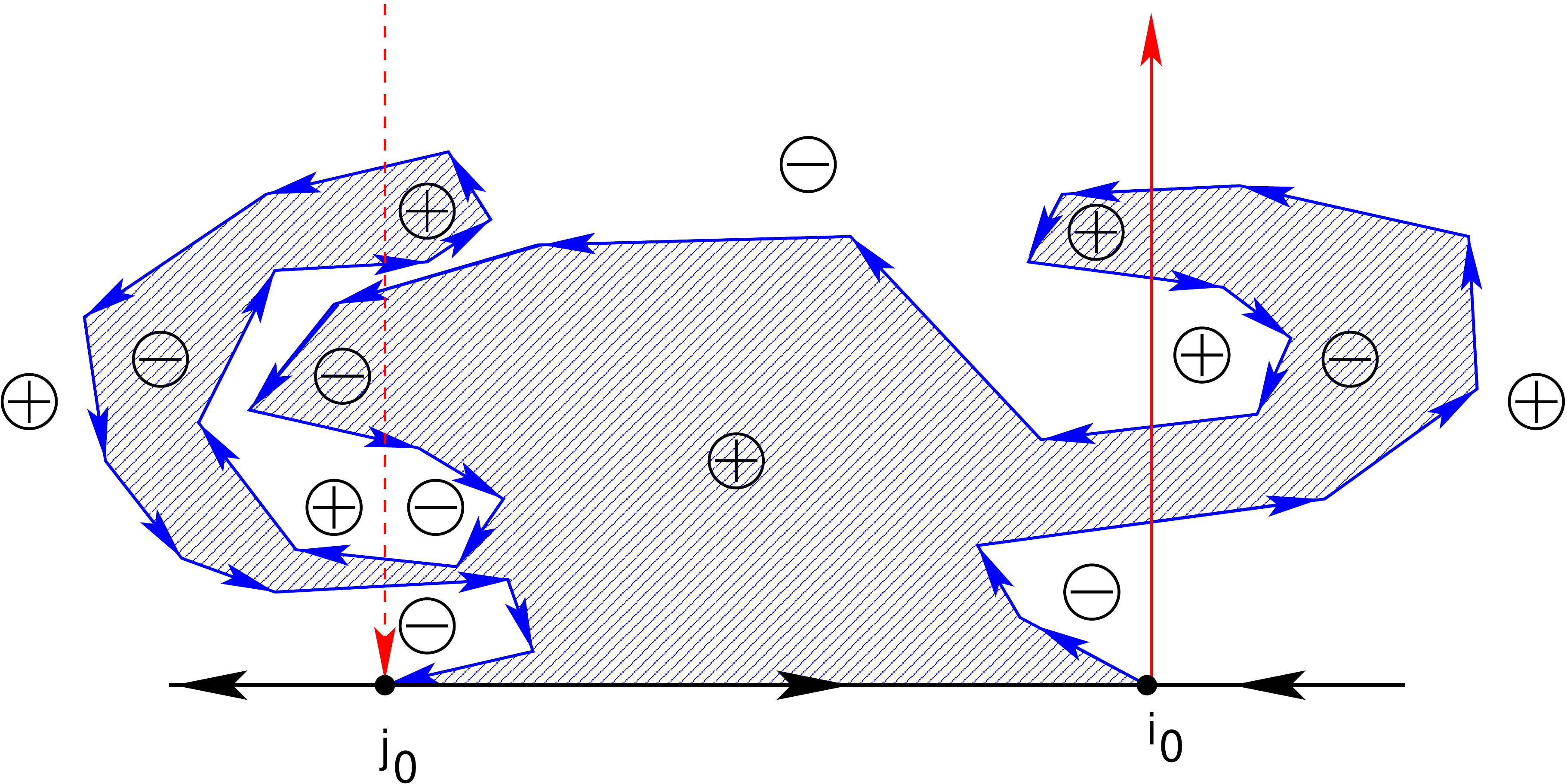}}
	\hfill
	{\includegraphics[width=0.4\textwidth]{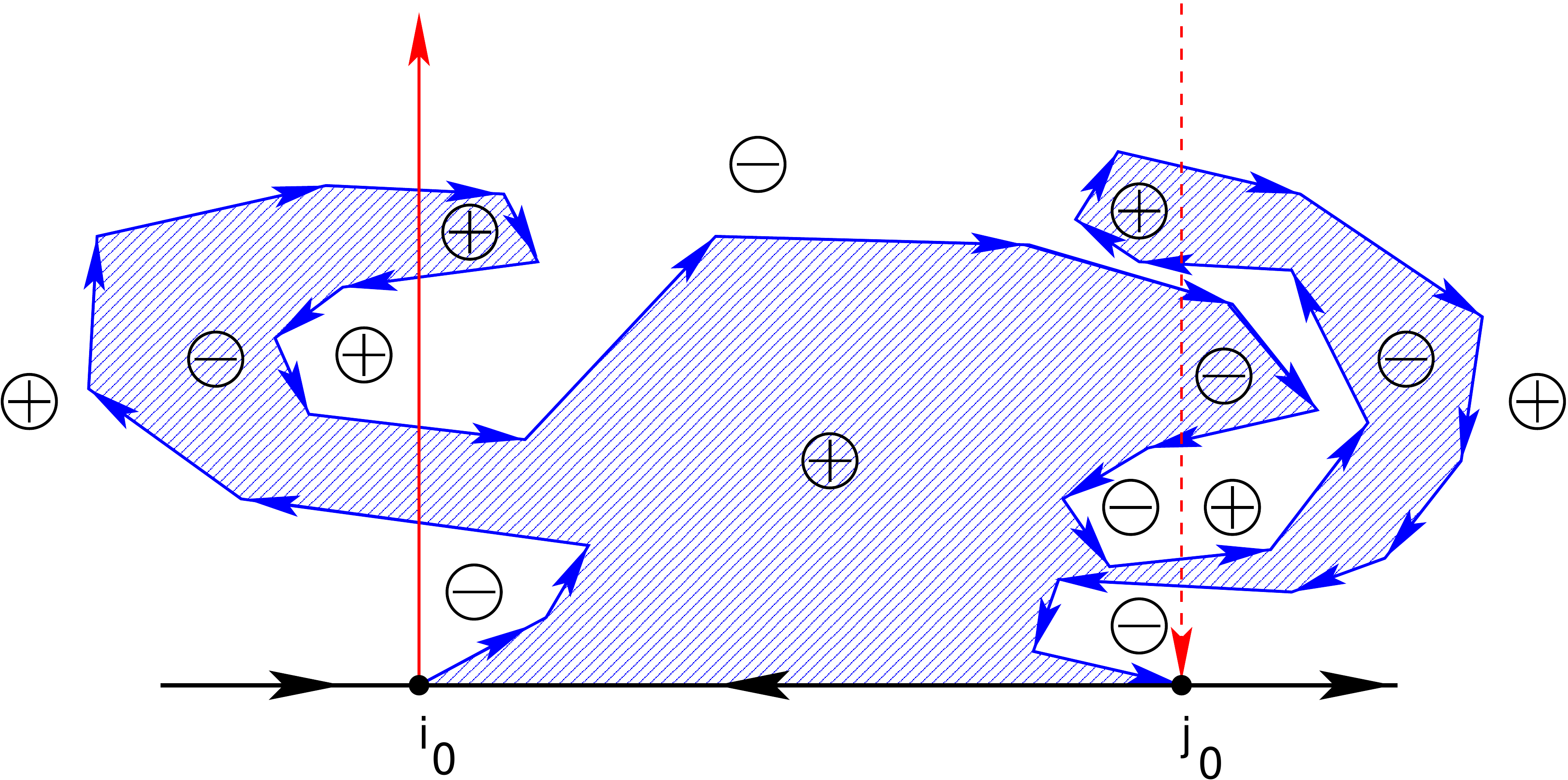}}
  \caption{\small{\sl We illustrate the marking of the regions.}\label{fig:inv_symb}}
\end{figure}

Let us remark that this marking remains invariant after the change of orientation.

If the edge $e\not \in \mathcal P_0$ (respectively $e\not \in \mathcal Q_0$), then we assign it an index $\gamma(e)$ as follows
\begin{equation}\label{eq:eps_not_path}
\gamma (e) =
\left\{ 
\begin{array}{ll} 
0 & \mbox{ if the starting vertex of } e \mbox{ belongs to a } + \mbox{ region, }\\
1 & \mbox{ if the starting vertex of } e \mbox{ belongs to a } - \mbox{ region, }\\
\end{array}
\right.
\end{equation}
where, in case the initial vertex of $e$ belongs to $\mathcal P_0$ or $\mathcal Q_0$, we make an infinitesimal shift of the starting vertex in the direction of $e$ before assigning the edge to a region. 

If the edge $e\in \mathcal P_0$ (respectively $e\in \mathcal Q_0$), we assign it the index 
\begin{equation}\label{eq:eps_on_path}
\gamma (e) = \gamma_1 (e) + \gamma_2 (e) +\mbox{int}(e),
\end{equation}
using the initial orientation $\mathcal O$ as follows:
\begin{enumerate}
\item We look at the region to the left and near the ending point of $e$, and assign index 
\[
\gamma_1 (e) =
\left\{ 
\begin{array}{ll} 
0 & \mbox{ if the region is marked with } + ,\\
1 & \mbox{ if the region is marked with } - ;\\
\end{array}
\right.
\]
\item We consider the ordered pair $(e, \mathfrak{l})$ and assign index 
\[
\gamma_2 (e) = \frac{1-s(e,\mathfrak{l})}{2}
\]
with $s(\cdot,\cdot)$ as in (\ref{eq:def_s}).
\end{enumerate}

It is easy to check that $\gamma(e)$ does not change after the change of orientation.

\medskip

If the change of orientation is ruled by $\mathcal P_0$, we use a two-steps proof:
\begin{enumerate}
\item We conveniently change the boundary conditions at the boundary sinks in the initial orientation of the
network $({\mathcal N},\mathcal O,\mathfrak l)$, we compute the system of vectors ${\tilde E}_e$ satisfying these new boundary conditions and we give explicit relations between the two systems of vectors $E_e$ and  ${\tilde E}_e$ on $({\mathcal N},\mathcal O,\mathfrak l)$;
\item Then, we show that the system of vectors ${\hat E}_e$ in (\ref{eq:hat_E_P}), defined on $({\mathcal N},{\hat {\mathcal O}},\mathfrak l)$ coincides with the system ${\tilde E}_e$ up to non-zero multiplicative factors.
\end{enumerate}

\begin{lemma}\label{lemma:path}\textbf{The effect of a change of orientation along a non self--intersecting path from a boundary source to a boundary sink.}
Let $I = \{ 1\le i_1 < i_2 < \cdots < i_k\le n\}$ and $\bar I = \{ 1\le j_1 < j_2 < \cdots < j_{n-k}\le n\}$ respectively be the pivot and non--pivot indices in the representative RREF matrix $A$ associated to $({\mathcal N},\mathcal O,\mathfrak l)$. Assume that all the edges at the boundary vertices have unit weight and that no gauge ray intersects such edges in the initial orientation. 
Assume that we change the orientation  along a non-self-intersecting oriented path $\mathcal P_0$ from a boundary source $i_0$ to a 
boundary sink $j_0$. Let $E_e$ and $\tilde E_e$ be the systems of vectors on $({\mathcal N},\mathcal O,\mathfrak l)$ corresponding to the following choices of boundary conditions at edges $e_j$ ending at the boundary sinks $b_j$, $j\in \bar I$:
\begin{equation}
\label{eq:orient1}
E_{e_{j}}=E_{j}, \quad\quad\quad\quad
\tilde E_{e_{j}}= \left\{ \begin{array}{ll} E_{j} & \mbox{ if } j\not = j_0;\\
E_{j_0}-\frac{1}{A^{r_0}_{j_0}} A[r_0], &\mbox{ if } j = j_0,
\end{array}
\right.
\end{equation}
where $E_{j}$ is the $j$--th vector of the canonical basis, whereas $A[r_0]$ is the row of the matrix $A$ associated to the source $i_0$. Then the following system of vectors $\hat E_e$,  $e\in {\mathcal N}$, 
\begin{equation}\label{eq:hat_E_P}
{\hat E}_e = \left\{ \begin{array}{ll}
 (-1)^{\gamma(e)} {\tilde E}_e, & \mbox{ if } e\not \in \mathcal P_0, \mbox{ with } \gamma(e) \mbox{ as in (\ref{eq:eps_not_path})},\\
\displaystyle \frac{(-1)^{\gamma(e)}}{w_e} {\tilde E}_e, & \mbox{ if } e\in \mathcal P_0, \mbox{ with } \gamma(e) \mbox{ as in (\ref{eq:eps_on_path})},
\end{array}\right.
\end{equation}
is the system of vectors on the network $({\mathcal N},{\hat {\mathcal O}},\mathfrak l)$ satisfying the boundary conditions
\begin{equation}
\label{eq:orient2}
\hat E_{e_{j}}= \left\{ \begin{array}{ll} (-1)^{\mbox{int} (e_j)^{\prime}} E_{j} & \mbox{ if } j\in \bar I \backslash \{ j_0\}\\
E_{i_0}, &\mbox{ if } j = i_0,
\end{array}
\right.
\end{equation}
where $\mbox{int}(e)^{\prime}$ is the number of intersections of the gauge ray $\mathfrak{l}_{j_0}$  with $e$.
\end{lemma}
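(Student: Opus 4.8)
The plan is to exploit the uniqueness statement of Theorem~\ref{theo:consist}: the system of edge vectors on $({\mathcal N},{\hat {\mathcal O}},\mathfrak l)$ is the \emph{unique} solution of the relations (\ref{eq:lineq_biv})--(\ref{eq:lineq_white}) written in the orientation ${\hat {\mathcal O}}$ subject to prescribed vectors at the boundary sinks of ${\hat {\mathcal O}}$. Hence it suffices to \emph{define} $\hat E_e$ by (\ref{eq:hat_E_P}) and to verify two things: that $\{\hat E_e\}$ satisfies the vertex relations of ${\hat {\mathcal O}}$, and that it satisfies the boundary conditions (\ref{eq:orient2}). Before this I would record the combinatorial effect of the reorientation: reversing $\mathcal P_0$ turns the source $b_{i_0}$ into a sink and the sink $b_{j_0}$ into a source, so the base passes from $I$ to $I'=(I\setminus\{i_0\})\cup\{j_0\}$ and the sinks of ${\hat {\mathcal O}}$ are indexed by $(\bar I\setminus\{j_0\})\cup\{i_0\}$. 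The color of every trivalent vertex lying on $\mathcal P_0$ is preserved, since the two path-edges at such a vertex are reversed simultaneously, leaving the number of incoming edges unchanged; the gauge rays are now drawn from $I'$, so that $\mbox{int}$ is replaced by a count $\mbox{int}^{\prime}$ obtained by deleting the ray $\mathfrak l_{i_0}$ and adding the ray $\mathfrak l_{j_0}$.

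For the boundary conditions I would treat the sinks separately. For $j\in\bar I\setminus\{j_0\}$ the edge $e_j$ is off $\mathcal P_0$, so $\hat E_{e_j}=(-1)^{\gamma(e_j)}E_j$; since in the initial orientation no gauge ray met a boundary edge, the only new intersections with boundary edges come from $\mathfrak l_{j_0}$, and the region marking near the boundary gives $\gamma(e_j)\equiv\mbox{int}(e_j)^{\prime}\pmod 2$, which is exactly (\ref{eq:orient2}). For the new sink $b_{i_0}$, the role of the modified datum in (\ref{eq:orient1}) is to make $\tilde E_{e_{i_0}}$ collapse to a multiple of $E_{i_0}$: writing the source edge vector linearly in the sink data via Theorem~\ref{theo:null}, one has $E_{e_{i_0}}=A[r_0]-E_{i_0}$ by Proposition~\ref{prop:rays}, and replacing $E_{j_0}$ by $E_{j_0}-\tfrac{1}{A^{r_0}_{j_0}}A[r_0]$ subtracts precisely $A[r_0]$, leaving $\tilde E_{e_{i_0}}=-E_{i_0}$. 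Then (\ref{eq:hat_E_P}) yields $\hat E_{e_{i_0}}=(-1)^{\gamma(e_{i_0})+1}E_{i_0}$, and a local check of the marking at $i_0$ gives $\gamma(e_{i_0})=1$, so $\hat E_{e_{i_0}}=E_{i_0}$.

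The main work is the verification of the vertex relations, which I would split according to whether the vertex lies on $\mathcal P_0$. At a vertex off $\mathcal P_0$ all incident edges keep their orientation and weight, so the relation has the same shape in both orientations; multiplying the relation for $\tilde E$ by the appropriate sign and comparing with the relation for $\hat E$, everything reduces to the congruence $\gamma(e)-\gamma(f)\equiv\mbox{int}^{\prime}(e)-\mbox{int}(e)\pmod 2$ for edges $e,f$ meeting at the vertex. This is exactly the content of the region marking: the mark $+/-$ in (\ref{eq:eps_not_path}) flips precisely when one crosses $\mathcal P_0$ or a gauge ray, so the change in $\gamma$ records the change in the relevant intersection parities. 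At a vertex on $\mathcal P_0$ the relation genuinely changes, because the two path-edges reverse and their weights are inverted, which is why (\ref{eq:hat_E_P}) carries the factor $1/w_e$ there; the indices $\gamma_1(e)$ and $\gamma_2(e)$ in (\ref{eq:eps_on_path}) are designed to absorb the resulting change of the local winding numbers, $\gamma_1$ accounting for the reversal of the region to the left of the endpoint of $e$ and $\gamma_2=\tfrac{1-s(e,\mathfrak l)}{2}$ for the flip of $s(e,\mathfrak l)$ produced by reversing $e$.

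I expect the hard part to be the on-path vertices at which a reversed edge meets an unreversed one, namely the trivalent white and black vertices whose third edge is off $\mathcal P_0$: there the relation mixes reciprocal and ordinary weights, and one must check case by case, using the reversal identities for the local winding recorded in Lemma~\ref{lem:rotation}, that $\gamma_1+\gamma_2+\mbox{int}$ matches exactly the parity change of $\mbox{wind}(\cdot,\cdot)$ under edge reversal. Once this sign bookkeeping is completed for every vertex type, both the relations and the boundary conditions hold, and Theorem~\ref{theo:consist} identifies $\hat E_e$ with the system of vectors on $({\mathcal N},{\hat {\mathcal O}},\mathfrak l)$.
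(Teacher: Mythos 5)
Your overall strategy is the same as the paper's: define $\hat E_e$ by (\ref{eq:hat_E_P}), verify the boundary conditions (\ref{eq:orient2}) and the vertex relations in the new orientation, and then invoke the uniqueness statement of Theorem \ref{theo:consist}. Your treatment of the boundary data is correct and matches the paper: linearity in the sink data gives $\tilde E_{e_{i_0}}=-E_{i_0}$ (since $E_{e_{i_0}}=A[r_0]-E_{i_0}$ and the coefficient of the $j_0$--datum in the expansion of $E_{e_{i_0}}$ is $A^{r_0}_{j_0}$), and the sinks $j\in\bar I\setminus\{j_0\}$ are handled by the observation that $e_j$ starts in a $-$ region exactly when it crosses $\mathfrak l_{j_0}$. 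Likewise your reduction at off-path vertices to the congruence $\mbox{int}(f)-\widehat{\mbox{int}}(f)\equiv\gamma(f)-\gamma(g) \pmod 2$ is precisely the paper's (\ref{eq:int_vert_eq}), and it holds essentially for the reason you give, with one imprecision: the marking of (\ref{eq:eps_not_path}) flips only across $\mathfrak l_{i_0}$, $\mathfrak l_{j_0}$ and $\mathcal P_0$, not across arbitrary gauge rays, and an edge off $\mathcal P_0$ can never cross $\mathcal P_0$ anyway since edges of a plane graph meet only at vertices.

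The genuine gap is at the on-path vertices, which is where the entire technical content of the lemma lies and which you leave as a declared expectation (``one must check case by case \dots once this sign bookkeeping is completed''). Writing the white and black vertex equations before and after the reversal, the claim reduces to the parity identities (\ref{eq:path_b_vert_eq3})--(\ref{eq:path_w_vert_eq5}), which mix windings of pairs taken from $\{e_1,e_2,f\}$ and their reversals, the two intersection counts, and both indices $\gamma_1$, $\gamma_2$. These identities are not self-evidently ``absorbed by design'' of (\ref{eq:eps_on_path}): the paper proves them by introducing the cyclic order of Definition \ref{def:cyclic_order} and establishing Lemma \ref{lemma:equiv_rel} (for instance $\mbox{wind}(e_1,e_2)+\mbox{wind}(f,e_2)+\mbox{wind}(f,-e_1)+\gamma_2(e_1)\equiv[e_1,-e_2,f] \pmod 2$), whose proof checks one explicit configuration and then argues invariance of both sides under rotation of $\mathfrak l$ or of the individual edges via Lemma \ref{lem:rotation}. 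A raw case-by-case enumeration over configurations of three edges and a gauge direction is large and error-prone, and you neither state the identities to be verified nor supply an organizing invariant that would make such a check tractable; as written, the key step is asserted rather than proved.
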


\begin{remark}
To simplify the proof of Lemma \ref{lemma:path}, we assume without loss of generality that the edges at boundary vertices have unit weight and that no gauge ray intersects them in the initial orientation. This hypothesis may be always fulfilled modifying the initial network using the weight gauge freedom (Remark~\ref{rem:gauge_weight}) and adding, if necessary, bivalent vertices next to the boundary vertices using move (M3). 
In Sections~\ref{sec:different_gauge} and \ref{sec:moves_reduc}, we show that the effect of these transformations amounts to a well--defined non zero multiplicative constant for the edge vectors. Therefore, the statement in Lemma \ref{lemma:path} holds in the general case with obvious minor modifications in the boundary conditions for the three systems of vectors $E_e$, $\tilde E_e$ and $\hat E_e$.
\end{remark}

\begin{proof}
The system of vectors  $\tilde E_e - E_e$ is the solution to the system of linear relations on 
$({\mathcal N},\mathcal O,\mathfrak l)$ for the following boundary conditions: 
\[
\tilde E_{e_{j}}-E_{e_{j}} = \left\{ \begin{array}{ll} 0 & \mbox{ if } j\not = j_0;\\
-\frac{1}{A^{r_0}_{j_0}} A[r_0], &\mbox{ if } j = j_0.
\end{array}
\right.
\]
Then at all edges $e\in {\mathcal N}$ the difference $\tilde E_{e}-E_{e}$ is proportional to $A[r_0]$. In particular $\tilde E_{e_{i_0}} = -E_{i_0}$, since, by construction, $E_{e_{i_0}} = A[r_0] - E_{i_0}$. Therefore, each vector $\hat E_e$ in (\ref{eq:hat_E_P}) is a linear combination of the vector $E_e$ and $A[r_0]$.

Next  we check that the system of edge vectors ${\hat E}_e$ defined by  (\ref{eq:hat_E_P})  satisfies the boundary conditions for the transformed network (\ref{eq:orient2}). First of all, any given boundary sink edge $e_j$, $j\not = j_0, i_0$, ends in a $+$ region, whereas it starts in a $-$ region only if it intersects $\mathfrak{l}_{j_0}$. The latter is exactly the unique case in which $\prec E_{e_j}, \hat E_{e_j} \succ =\prec \tilde E_{e_j}, \hat E_{e_j} \succ =-1$. 

The edge $e_{i_0}$ belongs to the path $\mathcal P_0$ and it does not intersect any gauge ray in both orientations of the network. $e_{i_0}$ has a $+$ region to the left and the pair $(e,\mathfrak{l})$ is negatively oriented or it has a $-$ region to the left and the pair $(e,\mathfrak{l})$ is positively oriented (see Figure \ref{fig:edgei_0}). Therefore
\[
\gamma (e_{i_0}) = \gamma_1 (e_{i_0}) + \gamma_2 (e_{i_0}) =1.
\]
Finally $\hat E_{e_{i_0}} = (-1)^{\gamma (e_{i_0})} \tilde E_{e_{i_0}} = E_{i_0}$ since $\tilde E_{e_{i_0}} = - E_{i_0}$. 

To complete the proof we have to check that the system $\hat E_e$ defined by (\ref{eq:hat_E_P})  solves the linear system on $({\mathcal N},{\hat {\mathcal O}},\mathfrak l)$ at each internal vertex of the network. We prove it in Appendix \ref{app:orient}.

\end{proof}

\begin{example}\label{example:null_orient}
We illustrate Lemma \ref{lemma:path} for the Example \ref{example:null_orient} in Figure \ref{fig:talaska_orientation}. Let us compute the vectors $\tilde E$ using the orientation in Figure \ref{fig:talaska_orientation} [left]
and boundary condition ${\tilde E}_{u_1} = E_{u_1} - \frac{1+p+q}{2p+1} A[1] = (0, -\frac{1+p+q}{2p+1})$. Then, we immediately get
\[
\resizebox{\textwidth}{!}{$ 
{\tilde E}_{u} ={\tilde E}_{v} = \frac{q-p}{1+p+q } {\tilde E}_{u_1} = \left( 0   , \frac{p-q}{2p+1 }\right),
\quad {\tilde E}_{u_p} = \frac{p(1+2q)}{1+p+q } {\tilde E}_{u_1} = \left( 0   , -\frac{p(1+2q)}{2p+1 }\right), \quad {\tilde E}_{u_2} =  \frac{1+2p}{1+p+q } {\tilde E}_{u_1} = \left( 0   , -1\right).
$}
\]
Applying (\ref{eq:hat_E_P}), we get ${\hat E}_{-u_2} = - {\tilde E}_{u_2} =\left( 0   ,1 \right)$ and
\[
\resizebox{\textwidth}{!}{$ 
{\hat E}_{-u} = {\tilde E}_u = \left( 0   , \frac{p-q}{2p+1 }\right), \quad {\hat E}_{v} = -{\tilde E}_v = \left( 0   , \frac{q-p}{2p+1 }\right), \quad {\hat E}_{-u_p} = -	\frac{1}{p}{\tilde E}_{u_p} = \left( 0   , \frac{1+2q}{2p+1 }\right), \quad
{\hat E}_{-u_1} = {\tilde E}_{u_1},
$}
\]
since  $\gamma(u)=\gamma(u_1)=1$ and  $\gamma(v)=\gamma(u_p)=\gamma(u_2)=-1$. The latter vectors coincide with those computed directly using Theorem \ref{theo:null} in the new orientation (Figure \ref{fig:talaska_orientation}[right]): there are two untrivial conservative flows of weight $1$ and $p^{-1}$ so that
\[
\resizebox{\textwidth}{!}{$ 
{\hat E}_{-u} = \left( 0   , \frac{p}{2p+1} \Big( 1- \frac{q}{p}\Big)\right), \quad {\hat E}_{v} = \left( 0   , \frac{p}{2p+1} \Big( \frac{q}{p}-1\Big)\right), \quad {\hat E}_{-u_p} =  \left( 0   , \frac{p}{2p+1} \Big( \frac{2q+1}{p}\Big)\right), \quad
{\hat E}_{-u_1} =\left( 0   , \frac{p}{2p+1} \Big( 1+\frac{q+1}{p}\Big)\right).
$}
\]
\end{example}

\begin{figure}
  \centering{\includegraphics[width=0.3\textwidth]{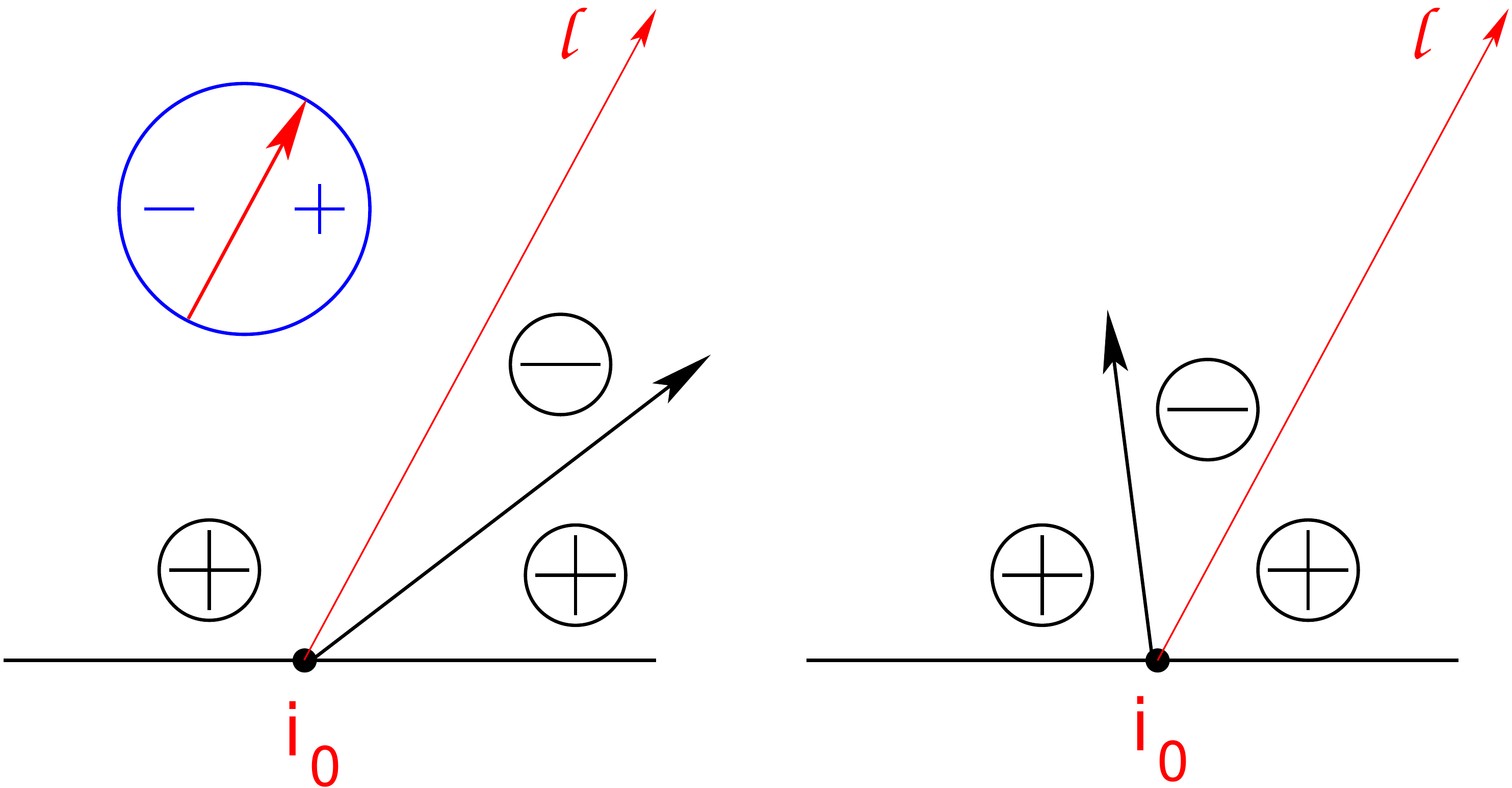}}
  \caption{\small{\sl The rule of the sign at $e_{i_0}$.}}\label{fig:edgei_0}
\end{figure}

The effect of a change of orientation along a closed simple path $\mathcal Q_0$ on the system of edge vectors follows along similar lines as above. 

 \begin{lemma}\label{lemma:cycle}\textbf{The effect of a change of orientation along a simple closed cycle.}
Let $I = \{ 1\le i_1 < i_2 < \cdots < i_k\le n\}$ be the pivot indices for $({\mathcal N},\mathcal O,\mathfrak l)$.
Let $E_e$ be the system of vectors on $({\mathcal N},\mathcal O,\mathfrak l)$ satisfying the boundary conditions $E_j$ at the boundary sinks $j\in \bar I$. Assume that we change the orientation along a simple closed cycle $\mathcal Q_0$ and let $({\mathcal N},{\hat {\mathcal O}},\mathfrak l)$ be the newly oriented network. 
Then, the system of edge vectors 
\begin{equation}\label{eq:hat_E_Q}
{\hat E}_e = \left\{ \begin{array}{ll}
 (-1)^{\gamma(e)} E_e, & \mbox{ if } e\not \in \mathcal Q_0, \mbox{ with } \gamma(e) \mbox{ as in (\ref{eq:eps_not_path})},\\
\displaystyle \frac{(-1)^{\gamma(e)}}{w_e} E_e, & \mbox{ if } e\in \mathcal Q_0, \mbox{ with } \gamma(e) \mbox{ as in (\ref{eq:eps_on_path})}.
\end{array}\right.
\end{equation}
is the system of vectors on the network  $({\mathcal N},{\hat {\mathcal O}},\mathfrak l)$ satisfying the same boundary conditions $E_j$ at the boundary sinks $j\in \bar I$.
\end{lemma}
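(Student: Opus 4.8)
The plan is to deduce the statement from the uniqueness part of Theorem~\ref{theo:consist}. Since $\mathcal{Q}_0$ is a closed cycle lying strictly inside the disk, reorienting along it changes neither the boundary sources nor the boundary sinks; hence the pivot set $I$, its complement $\bar I$, and the prescribed data $E_j$ at $b_j$, $j\in\bar I$, are common to $(\mathcal{N},\mathcal O,\mathfrak l)$ and $(\mathcal{N},\hat{\mathcal O},\mathfrak l)$. By Theorem~\ref{theo:consist} the edge vectors on $(\mathcal{N},\hat{\mathcal O},\mathfrak l)$ with these boundary conditions are the unique solution of the vertex relations (\ref{eq:lineq_biv})--(\ref{eq:lineq_white}) in $\hat{\mathcal O}$, so it suffices to check that the vectors $\hat E_e$ of (\ref{eq:hat_E_Q}) both meet the boundary conditions and solve every vertex relation in $\hat{\mathcal O}$. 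The boundary conditions are immediate: by planarity no edge $e_j$ incident to a boundary sink can cross the closed curve $\mathcal{Q}_0$, so its initial vertex lies exterior to $\mathcal{Q}_0$, in a $+$-region, whence $\gamma(e_j)=0$ by (\ref{eq:eps_not_path}); as boundary edges keep weight and intersection number, $\hat E_{e_j}=E_{e_j}$ reproduces $E_j$. Note that, in contrast to Lemma~\ref{lemma:path}, the data are unchanged, which is why no $A[r]$-term occurs and (\ref{eq:orient}) holds here with $c^r_e=0$.

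I then split the internal vertices. If $V\notin\mathcal{Q}_0$, none of its incident edges lies on $\mathcal{Q}_0$, so the relation at $V$ in $\hat{\mathcal O}$ has exactly the same weights, windings and intersection numbers as in $\mathcal O$. Moreover every edge entering that relation joins $V$ to a vertex in the same region of the $\pm$-marking, since such an edge cannot cross $\mathcal{Q}_0$ (the infinitesimal-shift convention of (\ref{eq:eps_not_path}) handles the case of a neighbour on $\mathcal{Q}_0$). Hence $\gamma$ takes one common value on all these edges, and multiplying the old relation by the corresponding sign yields the new one verbatim.

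The substance is the case $V\in\mathcal{Q}_0$. Exactly two incident edges $c_{\mathrm{in}},c_{\mathrm{out}}$ lie on $\mathcal{Q}_0$ (incoming resp.\ outgoing in $\mathcal O$), and they reverse orientation and invert their weights in $\hat{\mathcal O}$, while the third edge $f$ is unchanged; cycle reversal preserves the color of $V$, so the relation at $V$ keeps its type. I write the relation at $V$ in $\hat{\mathcal O}$, insert the definitions (\ref{eq:hat_E_Q}), and eliminate the edge vector of the reversed cycle edge using the relation at $V$ in $\mathcal O$. The cycle-edge weight cancels against its inverse, and matching the coefficients of the two surviving edge vectors reduces the identity to two congruences modulo $2$. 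Using $\gamma(e)=\gamma_1(e)+\gamma_2(e)+\mbox{int}(e)$ from (\ref{eq:eps_on_path}) the intersection numbers cancel, leaving purely local statements such as
\begin{equation*}
\gamma_1(c_{\mathrm{out}})+\gamma_2(c_{\mathrm{out}})+\gamma_1(c_{\mathrm{in}})+\gamma_2(c_{\mathrm{in}})\equiv \mbox{wind}(-c_{\mathrm{out}},-c_{\mathrm{in}})+\mbox{wind}(c_{\mathrm{in}},c_{\mathrm{out}}) \pmod 2,
\end{equation*}
together with a companion congruence coupling $c_{\mathrm{in}},c_{\mathrm{out}},f$, in which an extra $+1$ appears from the requirement that the coefficients of the off-cycle vector $E_f$ cancel. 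These follow from the local winding rules of Definition~\ref{def:winding_pair} and Lemma~\ref{lem:rotation}, together with the definitions of $\gamma_1$ (the mark of the region left of the endpoint) and $\gamma_2$ (the position of the edge relative to $\mathfrak l$), which are tailored precisely to record the sign flip forced by reversing an edge and by the $\pm$-marking on the two sides of $\mathcal{Q}_0$ (the analogous bookkeeping covers black and bivalent vertices, the latter having no edge $f$).

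I expect the main obstacle to be this last step, the verification of the local congruences. The difficulty is combinatorial rather than conceptual: one runs through the finitely many angular configurations of the pair $(c_{\mathrm{in}},c_{\mathrm{out}})$ and of $f$ relative to $\mathfrak l$ and to the $\pm$-marking, including the antiparallel cases governed by (\ref{eq:s_antipar}), and checks in each that the identity relating $\mbox{wind}(-c_{\mathrm{out}},-c_{\mathrm{in}})$, $\mbox{wind}(c_{\mathrm{in}},c_{\mathrm{out}})$ and the $\gamma_i$ holds. This is exactly the casework performed for the path version in Appendix~\ref{app:orient}; the cycle case differs only in that both distinguished edges reverse (rather than a source-to-sink path), which is what keeps the boundary data, and hence the decomposition (\ref{eq:orient}), with no $A[r]$-contribution.
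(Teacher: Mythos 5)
Your proposal is correct and follows essentially the same route as the paper's own proof (carried out in Appendix~\ref{app:orient}): verify the boundary data, split internal vertices according to membership in $\mathcal{Q}_0$, handle off-cycle vertices by the constancy of $\gamma$ (the paper's identity (\ref{eq:int_vert_eq})), and reduce the on-cycle vertex relations to mod-2 congruences in the windings and $\gamma_i$ — your displayed congruence is exactly the paper's (\ref{eq:path_b_vert_eq3}) combined with (\ref{eq:a10}), and the companion congruence with the extra $+1$ is (\ref{eq:path_b_vert_eq4}). The final casework you defer is performed in the paper via Lemma~\ref{lemma:equiv_rel}, i.e.\ checking one configuration and propagating it with Lemma~\ref{lem:rotation}, which is precisely the verification scheme you describe.
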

The proof that the system ${\hat E}_e$ satisfy the linear relations for the transformed network is presented in Appendix~\ref{app:orient}.

\subsection{Dependence of edge vectors on weight and vertex gauge freedoms}\label{sec:different_gauge}

\begin{figure}
  \centering{\includegraphics[width=0.48\textwidth]{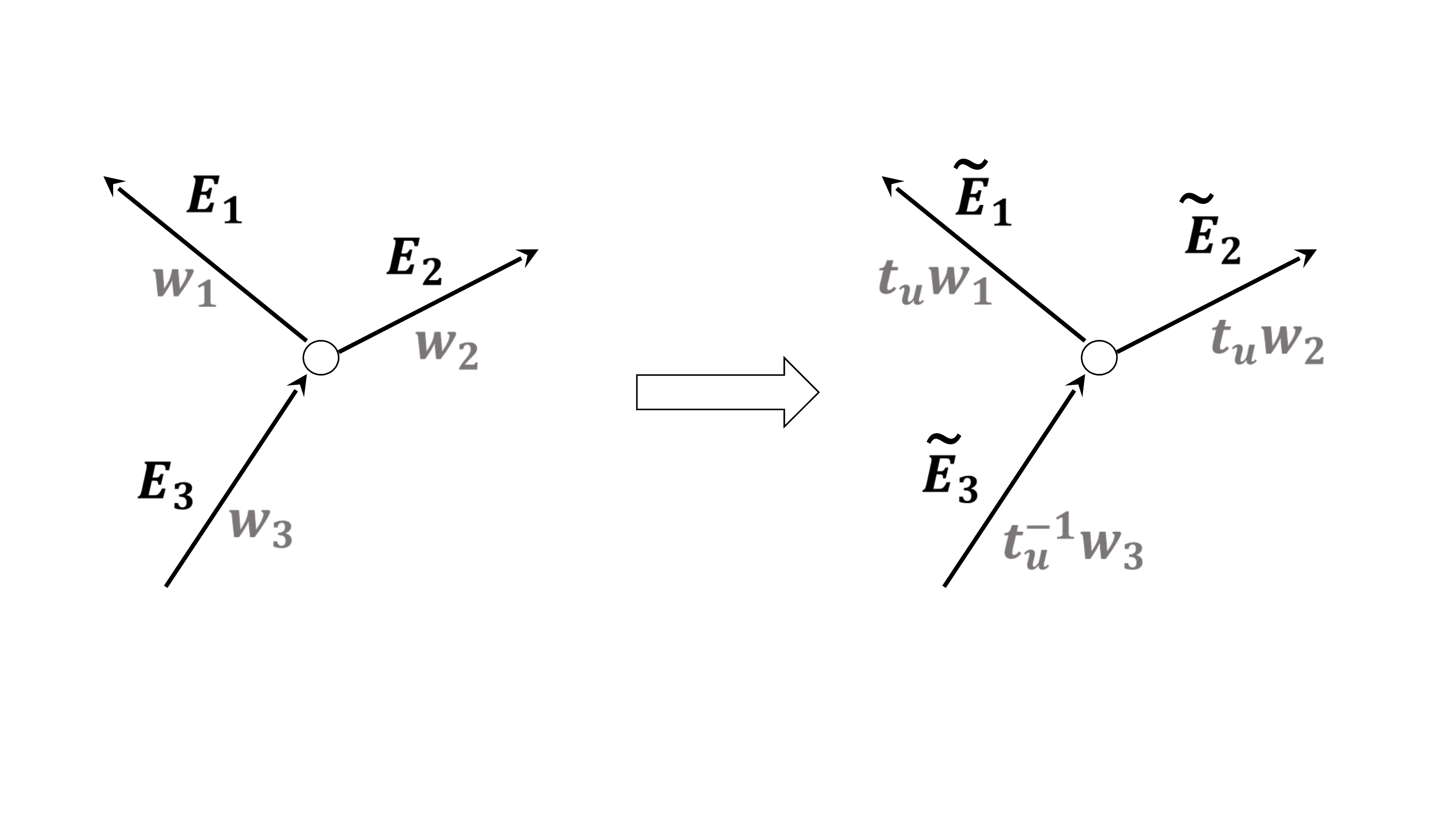}
	\hfill
	\includegraphics[width=0.48\textwidth]{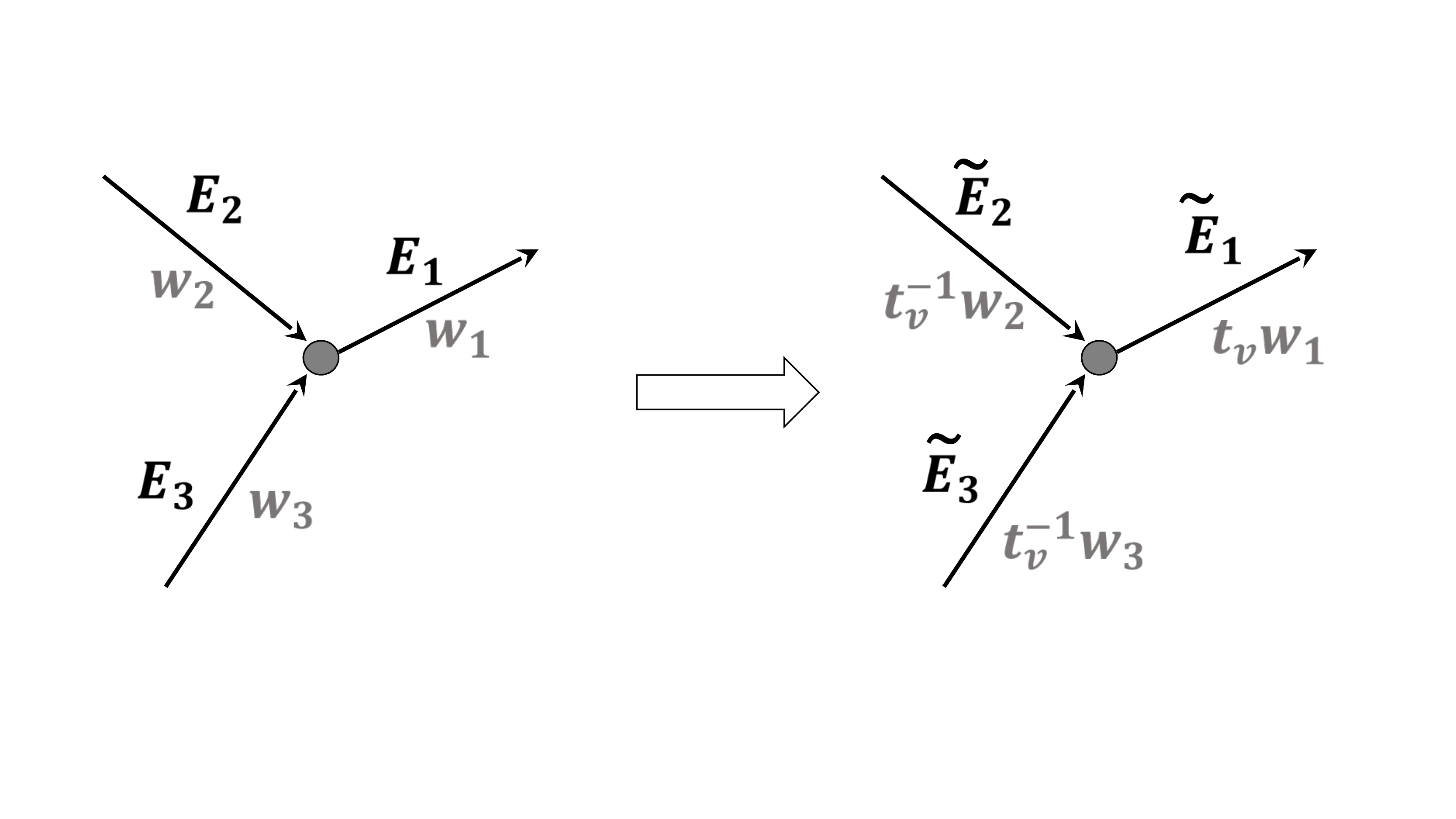}}
	\vspace{-1.5 truecm}
  \caption{\small{\sl The effect of the weight gauge transformation at a white [left] and at a black [right] vertex on the edge vectors.}\label{fig:weight_gauge_vectors}}
\end{figure}

Next we discuss the effect of the weight gauge and vertex gauge feeedom on the system of edge vectors: in both cases it is just local.

For any given $[A]\in \S$, $\mathcal N$ denotes a network representing $[A]$ with graph $\mathcal G$ and edge weights $w_e$. There is a fundamental difference in the gauge freedom of assigning weights depending on whether or not the graph $\mathcal G$ is reduced \cite{Pos}.

\begin{remark}\label{rem:gauge_weight}\textbf{The weight gauge freedom \cite{Pos}.} Given a point $[A]\in\S$
and a planar directed graph ${\mathcal G}$ in the disk representing $\S$, then $[A]$
 is represented by infinitely many gauge equivalent systems of weights $w_e$ on the edges $e$ of ${\mathcal G}$. Indeed, if a positive number $t_V$ is assigned to each internal vertex $V$, whereas $t_{b_i}=1$ for each boundary vertex $b_i$, then the transformation on each directed edge $e=(U,V)$
\begin{equation}
\label{eq:gauge}
w_e\rightarrow w_e t_U \left(t_V\right)^{-1},
\end{equation}
transforms the given directed network into an equivalent one representing $[A]$. 
\end{remark}

\begin{remark}\label{rem:gauge_freedom}\textbf{The unreduced graph gauge freedom.} As it was pointed out in \cite{Pos}, for unreduced directed graphs there is no one-to-one correspondence between the orbits of the gauge weight action (\ref{eq:gauge}) and the points in the corresponding positroid cell. Since we do not consider graphs with components isolated from the boundary, this extra gauge freedom arises if we apply the creation of parallel edges and leafs (see Section~\ref{sec:moves_reduc}). In Section \ref{sec:null_vectors} we show that in contrast with gauge transformations of the weights (\ref{eq:gauge}), the unreduced graph gauge freedom affects the system of edge vectors untrivially.
\end{remark}

\begin{lemma}\label{lem:weight_gauge}\textbf{Dependence of edge vectors on the weight gauge}
Let $E_e$ be the edge vectors on the network $({\mathcal N}, {\mathcal O}, \mathfrak{l})$.
\begin{enumerate}
\item Let ${\tilde E}_e$ be the system of edge vectors on $(\tilde {\mathcal N}, {\mathcal O}, \mathfrak{l})$, where $\tilde {\mathcal N}$ is obtained from ${\mathcal N}$ applying the weight gauge transformation at a white trivalent vertex as in Figure \ref{fig:weight_gauge_vectors} [left]. Then 
\begin{equation}\label{eq:wg_vector_white}
{\tilde E}_e = \left\{ \begin{array}{ll} E_e, &\quad  \forall e\in \mathcal N, \;\; e\not = e_1,e_2,\\
t_u E_e &\quad \mbox{ if } e = e_1,e_2.
\end{array}
\right.
\end{equation}
\item Let ${\tilde E}_e$ be the system of edge vectors on $(\tilde {\mathcal N}, {\mathcal O}, \mathfrak{l})$, where $\tilde {\mathcal N}$ is obtained from ${\mathcal N}$ applying the weight gauge transformation at a black trivalent vertex as in Figure \ref{fig:weight_gauge_vectors} [right]. Then
\begin{equation}\label{eq:wg_vector_black}
{\tilde E}_e = \left\{ \begin{array}{ll} E_e, &\quad \forall e\in \mathcal N, \;\;  e\not = e_1,\\
t_v E_e &\quad \mbox{ if } e = e_1.
\end{array}
\right.
\end{equation}
\end{enumerate}
\end{lemma}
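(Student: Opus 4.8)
The plan is to read the result directly off the extended Talaska formula (\ref{eq:tal_formula}) of Theorem~\ref{theo:null}, exploiting that the weight gauge transformation (\ref{eq:gauge}) changes \emph{only} the edge weights. Indeed, the graph ${\mathcal G}$, the orientation ${\mathcal O}$, the gauge ray direction $\mathfrak l$, and hence the index sets ${\mathcal F}_{e,b_j}(\mathcal G)$ and ${\mathcal C}(\mathcal G)$, together with all the numbers $\mbox{wind}(\cdot)$ and $\mbox{int}(\cdot)$, are purely combinatorial/geometric and are left untouched by (\ref{eq:gauge}). Thus in (\ref{eq:tal_formula}) only the flow weights $w(F)$ and $w(C)$ can change, and it suffices to track them.

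The key observation is a telescoping identity for (\ref{eq:gauge}). Writing each directed edge as $e_l=(X_l,Y_l)$, the transformation multiplies $w_{e_l}$ by $t_{X_l}t_{Y_l}^{-1}$. Along a directed walk $V_e\to\cdots\to b_j$ the intermediate factors cancel in consecutive pairs (the head of one edge is the tail of the next), so the weight of the walk is multiplied by $t_{V_e}t_{b_j}^{-1}=t_{V_e}$, where $V_e$ is the initial vertex of the first edge $e$ and we used $t_{b_j}=1$; this holds even if the walk revisits $V_e$. The same cancellation around a closed simple cycle produces the factor $1$, so the weight of every conservative flow, and hence the denominator $\sum_{C\in{\mathcal C}(\mathcal G)}w(C)$, is invariant. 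Since every edge flow $F\in{\mathcal F}_{e,b_j}(\mathcal G)$ is the union of an edge loop-erased walk $P_{e,b_j}$ (which picks up the factor $t_{V_e}$) with a conservative flow (invariant), we get $w(F)\mapsto t_{V_e}w(F)$, so the numerator of each bracket in (\ref{eq:tal_formula}) is multiplied by $t_{V_e}$ while the denominator is unchanged. With the same $B_j$ this yields
\[
\tilde E_e = t_{V_e}\,E_e,
\]
where $V_e$ is the initial vertex of $e$.

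It then remains only to specialize. For the gauge at the white trivalent vertex $u$ of Figure~\ref{fig:weight_gauge_vectors} [left] we set $t_V=1$ for $V\neq u$; the edges with initial vertex $u$ are exactly the two outgoing edges $e_1,e_2$, so $\tilde E_e=t_u E_e$ for $e=e_1,e_2$ and $\tilde E_e=E_e$ otherwise (in particular the incoming edge $e_3$ is unchanged), which is (\ref{eq:wg_vector_white}). For the gauge at the black trivalent vertex $v$ of Figure~\ref{fig:weight_gauge_vectors} [right] only $t_v\neq 1$, and the unique edge with initial vertex $v$ is the outgoing edge $e_1$, giving $\tilde E_{e_1}=t_v E_1$ and $\tilde E_e=E_e$ otherwise, which is (\ref{eq:wg_vector_black}). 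The apparent asymmetry between the two cases simply reflects that a white vertex has two outgoing edges while a black one has a single outgoing edge. As a consistency check, every boundary source edge has a boundary vertex as its initial vertex, so $t_{V_e}=1$ and $\tilde E_e=E_e$; this recovers the expected invariance of the boundary measurement matrix under (\ref{eq:gauge}), in agreement with Remark~\ref{rem:gauge_weight}.

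The only genuinely delicate point is the bookkeeping for edges incident to the gauged vertex whose other endpoint is a boundary sink: there the sink-edge weight itself rescales, and one must confirm the boundary data is transported consistently. A fully self-contained alternative, avoiding any appeal to (\ref{eq:tal_formula}) and any convergence issue in the infinite path sum, is to take $\tilde E_e=t_{V_e}E_e$ as an ansatz and verify directly, using Lemma~\ref{lem:relations}, that it solves the linear system of the gauge-transformed network: only the relations at $u$ (resp.\ $v$) and at the heads of its outgoing edges have modified coefficients, and each is checked to balance because $t_u$ (resp.\ $t_v$) cancels between the rescaled weight and the rescaled edge vectors; one then concludes by the full-rank uniqueness of Theorem~\ref{theo:consist}.
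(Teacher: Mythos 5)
Your proposal is correct; note that the paper itself omits the proof of this lemma (``straightforward''), so there is no argument to compare against, and yours supplies valid details. Both of your routes work. The first one, reading everything off (\ref{eq:tal_formula}) of Theorem \ref{theo:null}, is clean because the gauge transformation (\ref{eq:gauge}) touches only the weights: the telescoping is perhaps stated most safely not edge-by-edge along a walk but via the balance conditions in Definitions \ref{def:edge_flow} and \ref{def:cons_flow} --- in a conservative flow every internal vertex has equal in- and out-degree, so the factors $t_V^{\pm 1}$ cancel vertex by vertex and the denominator is invariant, while in an edge flow the only imbalance is one excess outgoing edge at the tail $V_e$ of $e$, giving exactly the factor $t_{V_e}$ (this also disposes of the subtlety that an edge loop-erased walk may revisit $V_e$, cf.\ Remark \ref{rem:loop}). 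Your resulting statement $\tilde E_e = t_{V_e} E_e$ for an \emph{arbitrary} weight gauge is in fact slightly more general than the lemma, which is the specialization to a single $t_u\neq 1$ (white: two outgoing edges $e_1,e_2$; black: one outgoing edge $e_1$, by perfectness), and it also makes the invariance of the boundary measurement matrix of Remark \ref{rem:gauge_weight} transparent since boundary source edges have $t_{V_e}=1$. Your fallback argument --- plugging the ansatz $\tilde E_e=t_{V_e}E_e$ into the relations of Lemma \ref{lem:relations} for the transformed network and invoking the full-rank uniqueness of Theorem \ref{theo:consist} --- is equally valid and avoids any discussion of flows altogether; either one would serve as the omitted proof.
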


The proof is straightforward and is omitted.
\begin{figure}
  \centering{\includegraphics[width=0.6\textwidth]{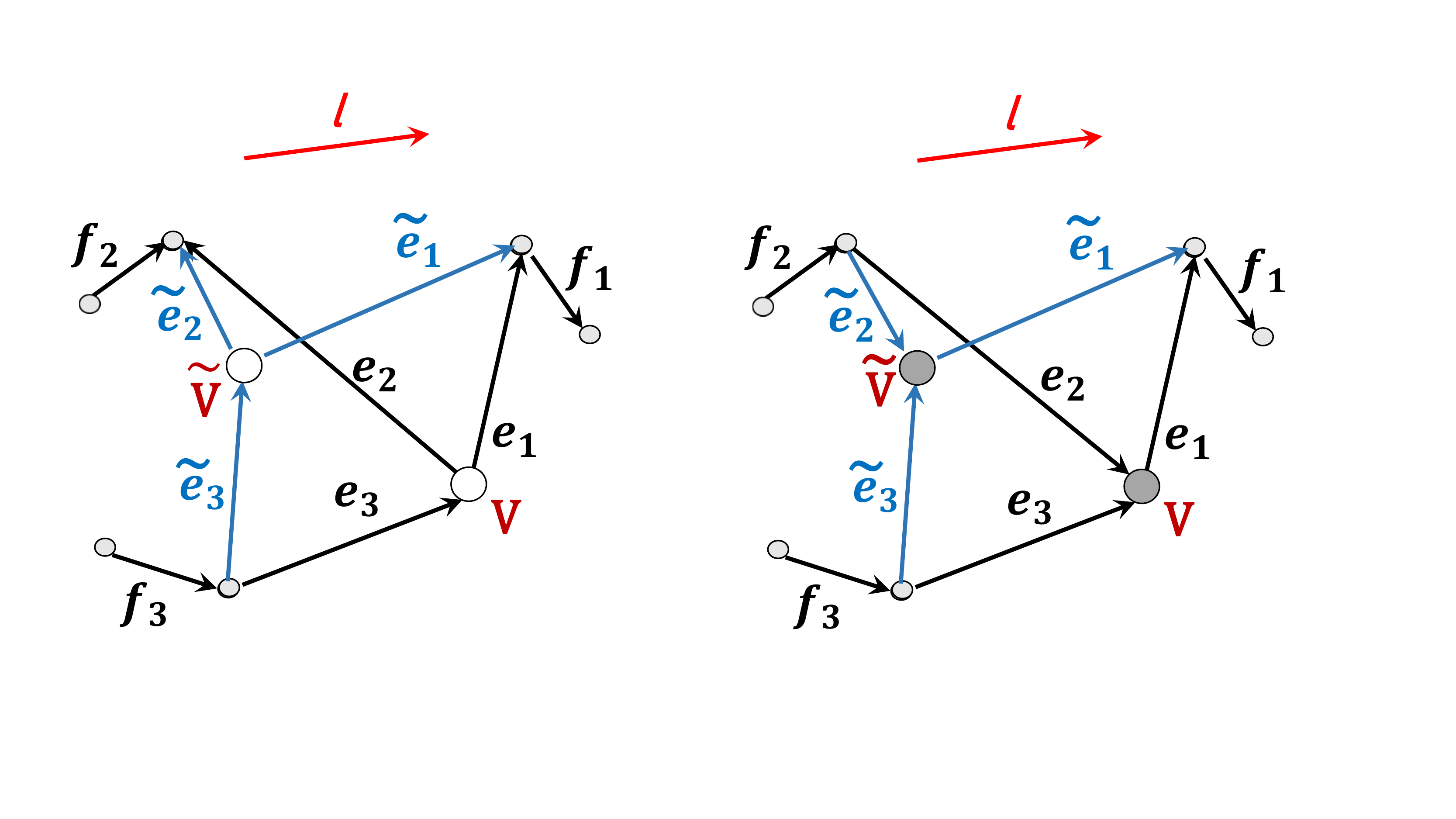}}
	\vspace{-.9 truecm} 
  \caption{\small{\sl The vertex gauge transformation at a white [left] and at a black [right] vertex consists in moving an internal vertex from position $V$ to $\tilde V$.}\label{fig:vertex_gauge_vectors}}
\end{figure}

\begin{remark}\label{rem:gauge_vertices}\textbf{Vertex gauge freedom of the graph} The boundary map is the same if we move vertices in $\mathcal G$ without changing their relative positions in the graph. Such transformation acts on edges via rotations, translations and contractions/dilations of their lenghts. Any such transformation may be decomposed in a sequence of elementary transformations in which a single vertex is moved whereas all other vertices remain fixed (see also Figure \ref{fig:vertex_gauge_vectors}).
\end{remark}

This transformation effects only the three edge vectors incident at the moving vertex and the latter may only change of sign. 

\begin{lemma}\label{lem:vertex_gauge}\textbf{Dependence of edge vectors on the vertex gauge}
\begin{enumerate}
\item Let $E_e$ and ${\tilde E}_e$ respectively be the system of edge vectors on $({\mathcal N}, {\mathcal O}, \mathfrak{l})$ and on $({\tilde {\mathcal N}}, {\mathcal O}, \mathfrak{l})$, where ${\tilde {\mathcal N}}$ is obtained from ${\mathcal N}$ moving one internal white vertex where notations are as in Figure \ref{fig:vertex_gauge_vectors}[left]. Then ${\tilde E}_e =E_e$, for all $e\not = e_1,e_2,e_3$ and
\begin{equation}\label{eq:white_vertex_gauge}
\resizebox{\textwidth}{!}{$ 
{\tilde E}_{e_i} = (-1)^{\mbox{wind}({\tilde e}_i,f_i)- \mbox{wind}(e_i,f_i)+\mbox{int}({\tilde e}_i)-\mbox{int}(e_i)} E_{e_i}, \quad i=1,2,
\quad\quad
{\tilde E}_{e_3} = (-1)^{\mbox{wind}(f_3,{\tilde e}_3)-\mbox{wind}(f_3,e_3)  } E_{e_3};
$}
\end{equation}
\item Let $E_e$ and ${\tilde E}_e$ respectively be the system of edge vectors on $({\mathcal N}, {\mathcal O}, \mathfrak{l})$ and on $({\tilde {\mathcal N}}, {\mathcal O}, \mathfrak{l})$, where ${\tilde {\mathcal N}}$ is obtained from ${\mathcal N}$ moving one internal black vertex as in Figure \ref{fig:vertex_gauge_vectors}[right]. Then ${\tilde E}_e =E_e$, for all $e\not = e_1,e_2,e_3$ and
\[
\resizebox{\textwidth}{!}{$ 
{\tilde E}_{e_1} = (-1)^{\mbox{wind}({\tilde e}_1,f_1)- \mbox{wind}(e_1,f_1)+\mbox{int}({\tilde e}_1)-\mbox{int}(e_1)} E_{e_1},
\quad\quad
{\tilde E}_{e_i} = (-1)^{\mbox{wind}(f_i,{\tilde e}_i)-\mbox{wind}(f_i,e_i)  } E_{e_i}, \quad i=2,3;
$}
\]
\end{enumerate}
\end{lemma}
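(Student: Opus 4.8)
The plan is to prove Lemma~\ref{lem:vertex_gauge} by reducing it to the local linear relations of Lemma~\ref{lem:relations}, exactly as the vertex gauge transformation only perturbs the geometric data (angles, ray intersections) of the three edges incident at the moving vertex. First I would observe that moving a single internal vertex from $V$ to $\tilde V$ leaves unchanged the combinatorial structure of the graph, all weights, the orientation $\mathcal O$, and the gauge direction $\mathfrak l$; the only quantities that can change are the local winding numbers and intersection numbers of the three incident edges $e_1,e_2,e_3$. Since the defining summation \eqref{eq:sum} for an edge vector $E_e$ depends on the geometry of the network solely through the factors $(-1)^{\mbox{\scriptsize wind}(\mathcal P)+\mbox{\scriptsize int}(\mathcal P)}$, and since any path through a far-away edge is unaffected, I expect $\tilde E_e = E_e$ for every $e \notin \{e_1,e_2,e_3\}$ to follow immediately. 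The substance of the lemma is therefore the sign rule for the three moved edges.

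For the three incident edges I would argue via the uniqueness part of Theorem~\ref{theo:consist}: the transformed system $\tilde E_e$ is the unique solution of the linear relations \eqref{eq:lineq_biv}--\eqref{eq:lineq_white} on $(\tilde{\mathcal N},\mathcal O,\mathfrak l)$ for the same boundary data, so it suffices to verify that the vectors proposed in \eqref{eq:white_vertex_gauge} satisfy those relations. Concretely, take the white case: the vertex $V$ has incoming edge $e_3$ and outgoing $e_1,e_2$, and each $e_i$ continues into a neighboring edge $f_i$ at its other endpoint (the vertex that does \emph{not} move). At those fixed far endpoints the relation for $e_i$ is governed by $\mbox{wind}(f_3,e_3)$ (where $e_3$ is outgoing from the far white/black vertex through $f_3$) and by $\mbox{wind}(e_i,f_i)$ respectively. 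I would write out the relation at the fixed neighbor of each $e_i$ both before and after the move and read off the multiplicative sign correction needed to keep it valid once the winding/intersection data at $V$'s end of $e_i$ have shifted. This is precisely where the exponents $\mbox{wind}(\tilde e_i,f_i)-\mbox{wind}(e_i,f_i)+\mbox{int}(\tilde e_i)-\mbox{int}(e_i)$ for $i=1,2$ and $\mbox{wind}(f_3,\tilde e_3)-\mbox{wind}(f_3,e_3)$ for $i=3$ arise.

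The key consistency check is the relation \emph{at the moving vertex $V$ itself}: equation \eqref{eq:lineq_white} couples $E_{e_3}$ to $E_{e_1},E_{e_2}$ through the local winding numbers $\mbox{wind}(e_3,e_1),\mbox{wind}(e_3,e_2)$ and $\mbox{int}(e_3)$, all of which change when $V$ moves. I would verify that substituting the scaled vectors from \eqref{eq:white_vertex_gauge} into the new relation at $\tilde V$ reproduces the old relation at $V$ after cancellation of signs. The main obstacle I anticipate is bookkeeping the interplay between the two winding changes at the two ends of each $e_i$: the local winding $\mbox{wind}(e_3,e_i)$ seen from $V$ and the winding $\mbox{wind}(e_i,f_i)$ or $\mbox{wind}(f_i,e_i)$ seen from the opposite vertex must combine so that the net sign is consistent at \emph{all} vertices simultaneously. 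This is exactly the content of Lemma~\ref{lem:rotation}, which tracks how $\mbox{wind}$ jumps by $\pm 1$ as an edge is rotated past $\mathfrak l$ or past the reversed partner edge; applying it to the rotation of each $e_i$ induced by moving $V$ should show the proposed exponents are forced. The black case is handled by the identical argument with the roles of incoming and outgoing edges (and hence the order of arguments in $\mbox{wind}(\cdot,\cdot)$) exchanged, consistent with the duality noted in the final remarks of the introduction.
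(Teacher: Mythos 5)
Your proposal matches the paper's proof in essentially every respect: the paper likewise reduces the lemma to the local linear relations of Lemma \ref{lem:relations} (with uniqueness supplied by Theorem \ref{theo:consist}), checks the relation at the moved vertex via the parity identities (\ref{eq:int_vertex_gauge}), and settles consistency at the far endpoints by an auxiliary lemma proved with exactly your rotation argument---Lemma \ref{lem:rotation} together with the observation that, since the topology is fixed, the rotating edge $e_i$ can never cross $-f_i$, so only crossings of $\mathfrak{l}$ occur and these flip the relevant windings simultaneously. Your only loose point, the claim that far-away edge vectors are unchanged ``immediately'' from the path summation (paths starting at far edges can still pass through the moved vertex), is harmless because your uniqueness argument establishes that invariance anyway.
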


\begin{proof}
The statement follows from the linear relations at vertices and the following identities at trivalent white vertices
\begin{equation}\label{eq:int_vertex_gauge}
\begin{array}{c}
\mbox{int} (e_i) +\mbox{int} (e_3) =\mbox{int} ({\tilde e}_i) +\mbox{int} ({\tilde e}_3), \quad (\!\!\!\!\!\!\mod 2),\\
\mbox{wind}(f_3,e_3) +  \mbox{wind}(e_3,e_i) +  \mbox{wind}(e_i,f_i) = \mbox{wind}(f_3,{\tilde e}_3) +  \mbox{wind}({\tilde e}_3,{\tilde e}_i) +  \mbox{wind}({\tilde e}_i,f_i) \quad (\!\!\!\!\!\!\mod 2),
\end{array}
\end{equation}
and the corresponding identities at black vertices and the next Lemma.
\end{proof}
\begin{lemma}
 Denote the vertex we move by $V$.  
\begin{enumerate}
\item If $e_1$ is an outgoing vector for $V$, and the vertex $V_1$ at the end of $e_1$ is white trivalent with the outgoing vectors $f_1$, $f_2$, then
  $$
  \mbox{wind}(e_1,f_1) - \mbox{wind}(\tilde e_1,f_1) = \mbox{wind}(e_1,f_2) - \mbox{wind}(\tilde e_1,f_2)
  \quad (\!\!\!\!\!\!\mod 2);
  $$
\item If $e_3$ is an incoming vector for $V$, and the vertex $V_3$ at the beginning of $e_3$ is black trivalent with the incoming vectors $g_1$, $g_2$, then
  $$
  \mbox{wind}(g_1,e_3) - \mbox{wind}(g_1,\tilde e_3) = \mbox{wind}(g_2,e_3) - \mbox{wind}(g_2,\tilde e_3). 
  \quad (\!\!\!\!\!\!\mod 2);
  $$
\begin{proof}
  Let us proof the first statement. When we move $V$, the vector $e_1$ continuously changes the direction. By Lemma~\ref{lem:rotation}, the winding numbers $\mbox{wind}(e_1,f_1)$  ($\mbox{wind}(e_1,f_2)$ respectively) changes if $e_1$ intersects $\mathfrak l$ or $-f_1$ ($\mathfrak l$ or $-f_2$ respectively). We keep the topology of the graph fixed, therefore $e_1$ cannot intersect either $-f_1$ or $-f_2$, therefore both windings may only change simultaneously.
  
  The proof of the second statement uses the same arguments.
\end{proof} 
\end{enumerate}  
\end{lemma}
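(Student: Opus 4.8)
The plan is to realize the vertex gauge move as a continuous motion that carries $V$ to $\tilde V$, during which only the edge $e_1$ rotates while the vertex $V_1$, its two outgoing edges $f_1,f_2$, and the gauge direction $\mathfrak l$ all stay fixed. First I would invoke Lemma~\ref{lem:rotation} in the form obtained by rotating the \emph{first} edge of a pair: since $e_1$ is the first entry in each of the ordered pairs $(e_1,f_1)$ and $(e_1,f_2)$ and only $e_1$ moves, the local winding $\mbox{wind}(e_1,f_i)$ can jump by $\pm1$ only when the rotating direction $e_1$ sweeps across $\mathfrak l$ or across $-f_i$. Reducing modulo $2$, where the sign of each jump is irrelevant, the difference $\mbox{wind}(e_1,f_i)-\mbox{wind}(\tilde e_1,f_i)$ becomes the parity of the number of crossings of $e_1$ with $\mathfrak l$ added to the parity of the number of crossings of $e_1$ with $-f_i$.

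The decisive observation is that the crossings with $\mathfrak l$ are common to both pairs: each time $e_1$ passes $\mathfrak l$, both $\mbox{wind}(e_1,f_1)$ and $\mbox{wind}(e_1,f_2)$ change simultaneously, hence contribute the same amount modulo $2$. Consequently the two differences agree modulo $2$ as soon as one shows that $e_1$ crosses neither $-f_1$ nor $-f_2$ during the motion. This is the heart of the argument and the step I expect to be the main obstacle. I would argue that an admissible vertex gauge move preserves the planar embedding of $\mathcal G$, and in particular the rotation system (the cyclic order of the three edges) at the trivalent vertex $V_1$; the direction of $e_1$ at $V_1$ is thereby confined to the angular sector prescribed by the embedding and can never become anti--parallel to either outgoing edge. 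Thus no $-f_i$ crossing occurs, each difference collapses to the common parity of the number of $\mathfrak l$--crossings of $e_1$, and the two are equal modulo $2$.

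For the second statement the roles are dual: the moving edge $e_3$ is now the incoming edge at $V$ and appears as the \emph{second} entry of the pairs $(g_1,e_3)$ and $(g_2,e_3)$, so Lemma~\ref{lem:rotation}(2) applies verbatim and $\mbox{wind}(g_i,e_3)$ jumps only when $e_3$ crosses $\mathfrak l$ or $-g_i$. The same topological confinement, now of $e_3$ at the black trivalent vertex $V_3$ whose two incoming edges $g_1,g_2$ stay fixed, shows that $e_3$ never crosses $-g_1$ or $-g_2$, so both differences again reduce to the parity of the number of $\mathfrak l$--crossings of $e_3$ and coincide modulo $2$. I would finally remark that the $\pm1$ ambiguity in each individual jump is harmless precisely because the whole computation is carried out modulo $2$, which is exactly the form in which these identities feed into the two displays \eqref{eq:int_vertex_gauge} and hence into the proof of Lemma~\ref{lem:vertex_gauge}.
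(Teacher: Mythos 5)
Your proposal is correct and takes essentially the same approach as the paper: realize the vertex move as a continuous rotation of the moving edge, use Lemma~\ref{lem:rotation} to locate the possible jumps of $\mbox{wind}(e_1,f_i)$ at crossings with $\mathfrak l$ or with $-f_i$, and invoke preservation of the planar embedding to exclude the $-f_i$ crossings, so that both windings can only jump simultaneously (at common $\mathfrak l$ crossings) and the two differences agree modulo $2$. Your extra remark about the rotation system (cyclic order of edges) at $V_1$ simply spells out what the paper compresses into ``we keep the topology of the graph fixed.''
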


\section{Effect of moves and reductions on edge vectors}\label{sec:moves_reduc}

In \cite{Pos} it is introduced a set of local transformations - moves and reductions - on planar bicolored networks in the disk which leave invariant the boundary measurement map. Two networks in the disk connected by a sequence of such moves and reductions represent the same point in $\GTNN$. 
There are three moves,
(M1) the square move (Figure \ref{fig:squaremove}),
(M2) the unicolored edge contraction/uncontraction (Figure \ref{fig:flipmove}),
(M3) the middle vertex insertion/removal (Figure \ref{fig:middle}),
and three reductions
(R1) the parallel edge reduction (Figure \ref{fig:parall_red_poles}),
(R2) the dipole reduction (Figure \ref{fig:dip_leaf}[left]),
(R3) the leaf reduction (Figure \ref{fig:dip_leaf}[right]).

In our construction each such transformation induces a well defined change in the system of edge vectors.
In the following, we restrict ourselves to plabic networks and, without loss of generality, we fix both the orientation and the gauge ray direction since their effect on the system of vectors is completely under control in view of the results of Section \ref{sec:vector_changes}.
We denote $({\mathcal N}, \mathcal O, \mathfrak l)$ the initial oriented 
network and $({\tilde {\mathcal N}}, {\tilde{\mathcal O}}, \mathfrak l)$ the oriented network obtained from it by applying one move (M1)--(M3) or one reduction 
(R1)--(R3). We assume that the orientation ${\tilde {\mathcal O}}$ coincides with $\mathcal O$ at all edges except at those involved in the move or reduction where we use Postnikov rules to assign the orientation. We denote with the same symbol and a tilde any quantity referring to the transformed network. 

\smallskip

{\bf (M1) The square move}
If a network has a square formed by four trivalent vertices
whose colors alternate as one goes around the square, then one can switch the colors of these
four vertices and transform the weights of adjacent faces as shown in Figure \ref{fig:squaremove}.
The relation between the face weights before and after the square move is \cite{Pos}
${\tilde f}_5 = (f_5)^{-1}$, ${\tilde f}_1 = f_1/(1+1/f_5)$, ${\tilde f}_2 = f_2 (1+f_5)$, ${\tilde f}_3 = f_3 (1+f_5)$, ${\tilde f}_4 = f_4/(1+1/f_5)$,
so that the relation between the edge weights with the orientation in Figure \ref{fig:squaremove} is
${\tilde \alpha}_1 = \frac{\alpha_3\alpha_4}{{\tilde\alpha}_2}$, 
${\tilde \alpha}_2 = \alpha_2 + \alpha_1\alpha_3\alpha_4$, ${\tilde \alpha}_3 = \alpha_2\alpha_3/{\tilde \alpha}_2$,  ${\tilde \alpha}_4 = \alpha_1\alpha_3/{\tilde \alpha}_2$.

The system of equations on the edges outside the square is the same before and after the move and also the boundary
conditions remain unchanged. The uniqueness of the solution implies that all vectors outside the square 
including $E_1$, $E_2$, $E_3$, $E_4$ remain the same. In the following Lemma, $e_j, h_k$ respectively are the edges carrying the vectors $E_j,F_k$ in the initial configuration. For instance, $\mbox{int}(h_1)$ is the number of intersections of gauge rays with the edge carrying the vector $F_1$ in the initial configuration, whereas $\mbox{wind}(-h_1,h_2)$ is the winding number of the pair of edges carrying the vectors $\tilde F_1, \tilde F_2$ after the move because the edge $h_1$ has changed of versus.

\begin{figure}
\centering{\includegraphics[width=0.45\textwidth]{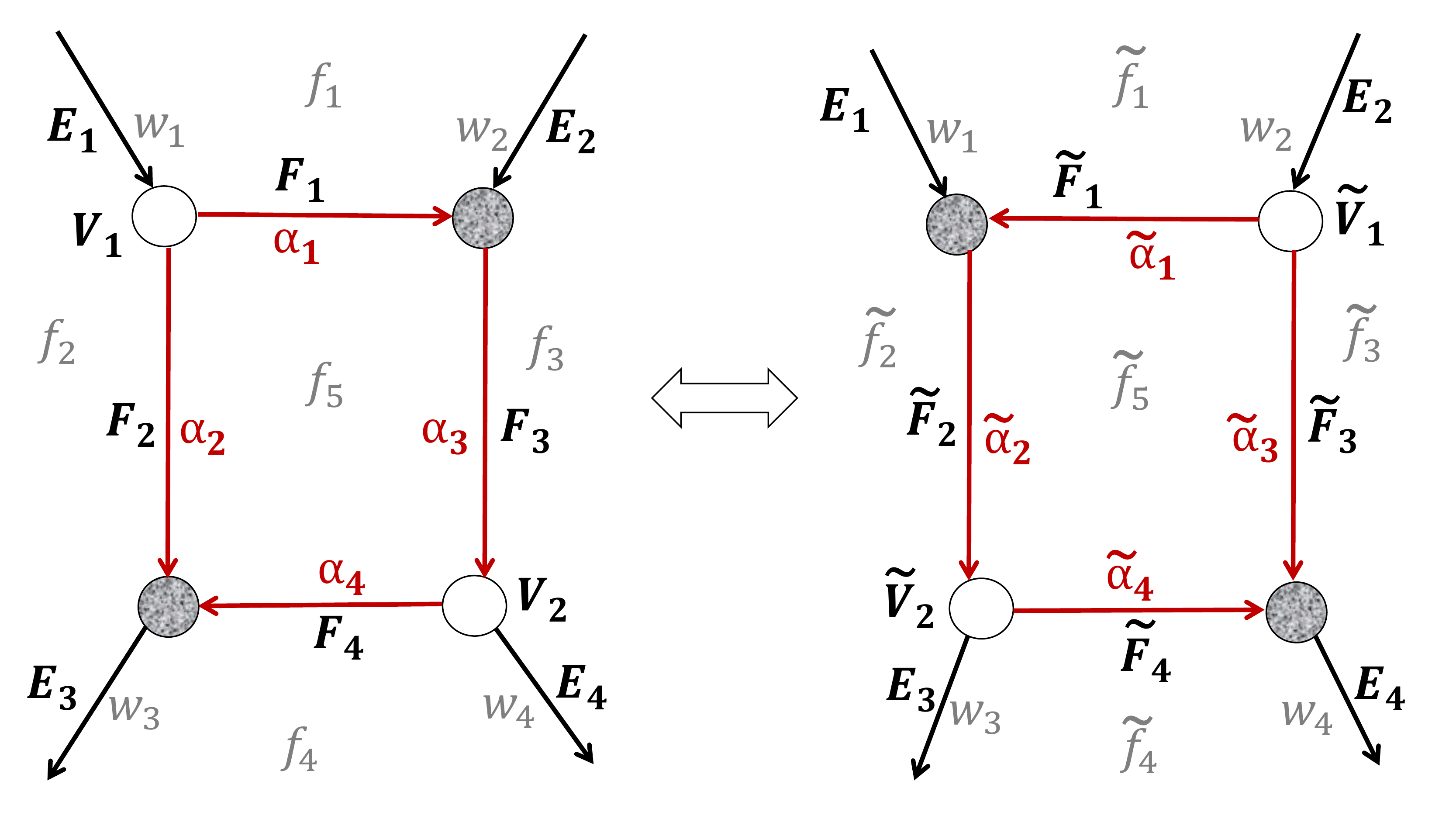}}
\caption{\small{\sl The effect of the square move.}\label{fig:squaremove}}
\end{figure}

In the next Lemma we provide the transformation rules between the vectors $F_j$,${\tilde F}_j$ and $E_j$, $j\in [4]$, assuming the orientation as in Figure \ref{fig:squaremove}.
\begin{lemma}
  The vectors $\tilde F$ after the square move are given by:
$$
{\tilde F}_4 = \tilde\alpha_4 \alpha_3^{-1} (-1)^{\mbox{int}(h_3)+\mbox{int}(h_4)+\gamma_2(h_4) +\mbox{wind}(h_3,h_4)+1 } F_3 + \tilde\alpha_4 (-1)^{\mbox{int}(h_4)+\gamma_2(h_4) } F_4,
$$
$$
{\tilde F}_2 = \alpha_1 (-1)^{\mbox{int}(h_1) +\gamma_2(h_4) + \mbox{wind}(h_2,-h_4) +\mbox{wind}(h_3,h_4)+1}  F_3 +\alpha_2 \alpha_4^{-1} (-1)^{1+\mbox{int}(h_2)+\mbox{int}(h_4)+\gamma_2(h_4)+  \mbox{wind}(h_2,-h_4)} F_4.
$$
$$
{\tilde F}_1 = \tilde\alpha_1 (-1)^{\mbox{int}(h_1)+\mbox{wind}(-h_1,h_2)} {\tilde F}_2,
$$
$$
{\tilde F}_3 =  (-1)^{\mbox{int}(h_3)+ \mbox{int}(h_4) + \gamma_2(h_4) +\mbox{wind}(h_3,h_4)+1 } \alpha_2 \alpha_1^{-1}  {\tilde F}_4,
$$

\end{lemma}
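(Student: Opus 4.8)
The plan is to derive all four transformation rules purely from the local linear relations of Lemma~\ref{lem:relations}, exploiting the fact — already established in the paragraph preceding the Lemma — that by the uniqueness part of Theorem~\ref{theo:consist} the external vectors $E_1,E_2,E_3,E_4$, and indeed every vector on an edge outside the square, are identical before and after the move. Since the square is formed by four trivalent vertices of alternating color, two are white and two are black, and the move simply switches these colors. Thus the whole argument is local: the transformed vectors $\tilde F_1,\dots,\tilde F_4$ are determined by the relations (\ref{eq:lineq_biv})--(\ref{eq:lineq_white}) at the four recolored vertices, subject to the unchanged external data entering the square.

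Concretely, I would first write the post-move white-vertex relations (\ref{eq:lineq_white}) at the two vertices that have become white, which express each incoming square vector as a sum of the two outgoing ones; this yields $\tilde F_4$ and $\tilde F_2$ as two-term combinations of the adjacent square-edge vectors. I would then use the preservation of the external vectors together with the pre-move relations at the original vertices to rewrite these combinations in terms of $F_3$ and $F_4$, producing the stated formulas for $\tilde F_4$ and $\tilde F_2$. Next, at the two vertices that have become black, the relations (\ref{eq:lineq_black}) give $\tilde F_1$ as a single multiple of $\tilde F_2$ and $\tilde F_3$ as a single multiple of $\tilde F_4$, which are exactly the remaining one-term formulas. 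Throughout, one substitutes the explicit edge-weight transformation rules ${\tilde\alpha}_1=\alpha_3\alpha_4/{\tilde\alpha}_2$, ${\tilde\alpha}_2=\alpha_2+\alpha_1\alpha_3\alpha_4$, ${\tilde\alpha}_3=\alpha_2\alpha_3/{\tilde\alpha}_2$, ${\tilde\alpha}_4=\alpha_1\alpha_3/{\tilde\alpha}_2$ recalled before the Lemma, so that each coefficient appears in the asserted form in the $\tilde\alpha$'s and $\alpha$'s.

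The main obstacle is the sign bookkeeping. Each exponent assembles three ingredients — the local winding $\mbox{wind}(\cdot,\cdot)$, the intersection count $\mbox{int}(\cdot)$, and the index $\gamma_2$ — and because several of the square edges reverse their versus under the move (the edges carrying $F_1,\dots,F_4$ are replaced by oppositely oriented edges, e.g. $-h_1$, $-h_3$, $-h_4$), every one of these quantities must be recomputed in the transformed configuration. I would control the winding changes by Lemma~\ref{lem:rotation}, tracking how $\mbox{wind}(e_k,e_{k+1})$ jumps by $\pm 1$ when an edge crosses the gauge direction $\mathfrak{l}$ or the antiparallel direction $-e_k$, and I would fix the intersection parities from the geometric position of the gauge rays relative to the square, just as in the proof of Corollary~\ref{cor:bound_source}. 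The extra $+1$'s appearing in several exponents and the occurrence of the $\gamma_2(h_4)$ terms reflect precisely these orientation reversals, and checking that the accumulated parities reproduce the stated formulas is the delicate computational core of the proof; everything else reduces to solving a small linear system whose consistency is guaranteed by Theorem~\ref{theo:consist}.
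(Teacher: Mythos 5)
Your overall skeleton matches the paper's proof: the paper likewise notes that uniqueness (Theorem \ref{theo:consist}) forces all vectors outside the square, in particular $E_3$ and $E_4$, to be unchanged; it then writes the relations (\ref{eq:lineq_black})--(\ref{eq:lineq_white}) at the square vertices before the move (expressing $F_1,\dots,F_4$ through $E_3,E_4$) and after the move (expressing ${\tilde F}_1,\dots,{\tilde F}_4$ through $E_3,E_4$), and eliminates $E_3,E_4$, using the weight rules ${\tilde\alpha}_i$. That part of your plan reproduces the paper and is routine.

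The genuine gap is in the sign bookkeeping, which you correctly single out as the core difficulty but then leave as a plan rather than execute. When you eliminate $E_3$ and $E_4$ you unavoidably obtain exponents containing windings with the \emph{external} edges: for instance the paper's intermediate formula for ${\tilde F}_4$ carries the exponent $\mbox{int}(h_3)+\mbox{int}(h_4)+\mbox{wind}(h_3,e_4)+\mbox{wind}(-h_4,e_4)$, whereas the statement contains only quantities intrinsic to the square and the gauge direction, namely $\gamma_2(h_4)$ and $\mbox{wind}(h_3,h_4)$. The bridge between the two is precisely identity (\ref{eq:white2}) of Lemma~\ref{lemma:equiv_rel}, proved in Appendix~\ref{app:orient}, applied after observing that the triples $h_4,e_4,-h_3$ and $-h_2,e_3,-h_4$ are ordered counterclockwise, so their cyclic orders (Definition \ref{def:cyclic_order}) vanish; this is exactly where the $\gamma_2(h_4)$ terms and the extra $+1$'s in the exponents come from. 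Your proposal never identifies this identity (or any equivalent one), and the tools you name do not substitute for it as stated: Lemma~\ref{lem:rotation} concerns continuous rotations of an edge or of the gauge direction, whereas in the square move nothing rotates continuously --- two edges reverse their versus discretely and the vertex colors switch; and the intersection numbers $\mbox{int}(h_i)$ need no Corollary~\ref{cor:bound_source}-type geometric analysis, since they are insensitive to edge reversal and are simply carried through the computation. One could in principle re-derive (\ref{eq:white2}) by a check-one-configuration-and-deform argument (that is how the paper proves it in the Appendix), but that identity, not the linear elimination, is the real content of the lemma, and your proposal defers exactly that step.
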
  
\begin{proof}
Using the linear relations
\begin{equation}\label{eq:move1}
\begin{array}{l}
\displaystyle F_1 = (-1)^{\mbox{int}(h_1)+\mbox{wind}(h_1,h_3)} \alpha_1 F_3, \quad F_2 = (-1)^{\mbox{int}(h_2)+\mbox{wind}(h_2,e_3)} \alpha_2E_3,\quad F_4 = (-1)^{\mbox{int}(h_4)+\mbox{wind}(h_4,e_3)} \alpha_4 E_3,
\\
F_3 = (-1)^{\mbox{int}(h_3)} \alpha_3 \left( (-1)^{\mbox{int}(h_4)+\mbox{wind}(h_3,h_4)+\mbox{wind}(h_4,e_3)}\alpha_4 E_3 + (-1)^{\mbox{wind}(h_3,e_4)} E_4  \right),
\end{array}
\end{equation}
\begin{equation}\label{eq:move2}
\begin{array}{l}
{\tilde F}_3 =  (-1)^{\mbox{int}(h_3)+\mbox{wind}(h_3,e_4)} {\tilde \alpha}_3 E_4, \quad {\tilde F}_4 =  (-1)^{\mbox{int}(h_4)+\mbox{wind}(-h_4,e_4)} {\tilde \alpha}_4 E_4, \\
{\tilde F}_2 =  (-1)^{\mbox{int}(h_2)} {\tilde \alpha}_2 \left( \,  (-1)^{\mbox{wind}(h_2,e_3)} E_3 + (-1)^{\mbox{int}(h_4)+\mbox{wind}(h_2,-h_4)+\mbox{wind}(-h_4,e_4)} {\tilde \alpha}_4  E_4  \right),
\end{array}
\end{equation}
we immediately get
$$
{\tilde F}_4 = \tilde\alpha_4 \alpha_3^{-1} (-1)^{\mbox{int}(h_3)+\mbox{int}(h_4)+ \mbox{wind}(h_3,e_4) +\mbox{wind}(-h_4,e_4)} F_3 + \tilde\alpha_4 (-1)^{1+\mbox{int}(h_4) + \mbox{wind}(h_3,h_4)+ \mbox{wind}(h_3,e_4) +\mbox{wind}(-h_4,e_4) } F_4.
$$
Since the triple $h_4,e_4,-h_3$ is oriented counterclockwise, the cyclic order $[h_4,e_4,-h_3]=0$ (see Definition \ref{def:cyclic_order}), and the statement follows from (\ref{eq:white2}).

Analogously,
$$
{\tilde F}_2 = \tilde\alpha_2 \alpha_4^{-1} (-1)^{\mbox{int}(h_2)+\mbox{int}(h_4)+ \mbox{wind}(h_2,e_3) +\mbox{wind}(h_4,e_3)} F_4 + \tilde\alpha_2 (-1)^{\mbox{int}(h_2) + \mbox{wind}(h_2,-h_4)}{\tilde F}_4.
$$
Again, the triple $-h_2,e_3,-h_4$ is oriented counterclockwise, the cyclic order $[-h_2,e_3,-h_4]=0$ and the statement follows from (\ref{eq:white2}).
\end{proof}

\smallskip

\begin{figure}
  \centering{\includegraphics[width=0.49\textwidth]{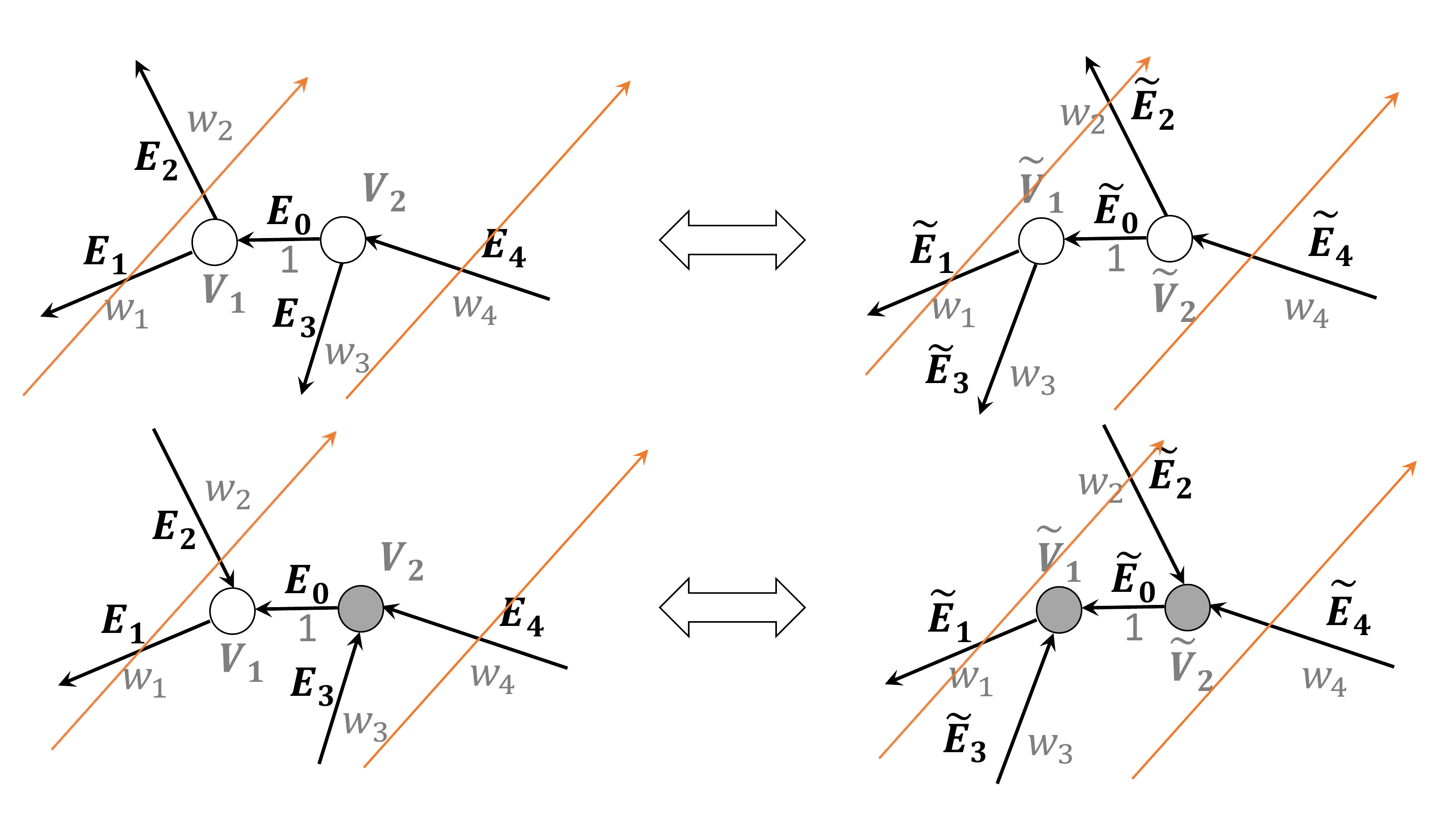}}
  \caption{\small{\sl The insertion/removal of an unicolored internal vertex is equivalent to a flip move of the unicolored vertices.}\label{fig:flipmove}}
\end{figure}

{\bf (M2) The unicolored edge contraction/uncontraction}
The unicolored edge contraction/un\-con\-traction consists in the elimination/addition of an internal vertex of equal color and of a unit edge, and it leaves invariant the face weights and the boundary measurement map \cite{Pos}. 

The contraction/uncontraction of an unicolored internal edge combined with the trivalency condition is equivalent to a flip of the unicolored vertices involved in the move (see Figure \ref{fig:flipmove}). We consider only pure flip moves, i.e.
all vertices keep the same positions before and after the move. Moreover, we assume that the edge $e_0$ connecting this pair of vertices has unit weight and sufficiently small length so that no gauge ray crosses it, all other edges
preserve their intersection numbers, and the winding at the vertices not involved in the move remain invariant.
Therefore, additivity of the winding numbers holds in this special case $\mbox{wind} (e_i,e_0)+ \mbox{wind} (e_0,e_j) = \mbox{wind} (e_i,e_j),$ with $e_i$ -- any incoming vector, $e_j$ -- any outgoing vector involved in the move.

Finally,
$$
{\tilde F}_i = F_i
$$
in all cases, 
$$
{\tilde E}_0 =
\left\{
\begin{array}{ll} E_0, & \mbox{if the vertices are black}, \\
  E_0 - (-1)^{\mbox{wind}(e_0, e_3)} F_3 +(-1)^{\mbox{wind}(e_0, e_2)} F_2, &
       \mbox{if the vertices are white.}
\end{array}
\right.
$$
We remark that the flip move may create/eliminate null edge vectors. For instance suppose that $(-1)^{\mbox{wind}(e_0, e_1)} F_1 +(-1)^{\mbox{wind}(e_0, e_3)} F_3=0$ and $(-1)^{\mbox{wind}(e_0, e_1)} F_1 +(-1)^{\mbox{wind}(e_0, e_2)} F_2\not =0$. Then in the initial configuration all edge vectors are different from zero whereas in the final $\tilde E_0=0$.

\smallskip

{\bf (M3) The middle edge insertion/removal}

\begin{figure}
  \centering
	{\includegraphics[width=0.49\textwidth]{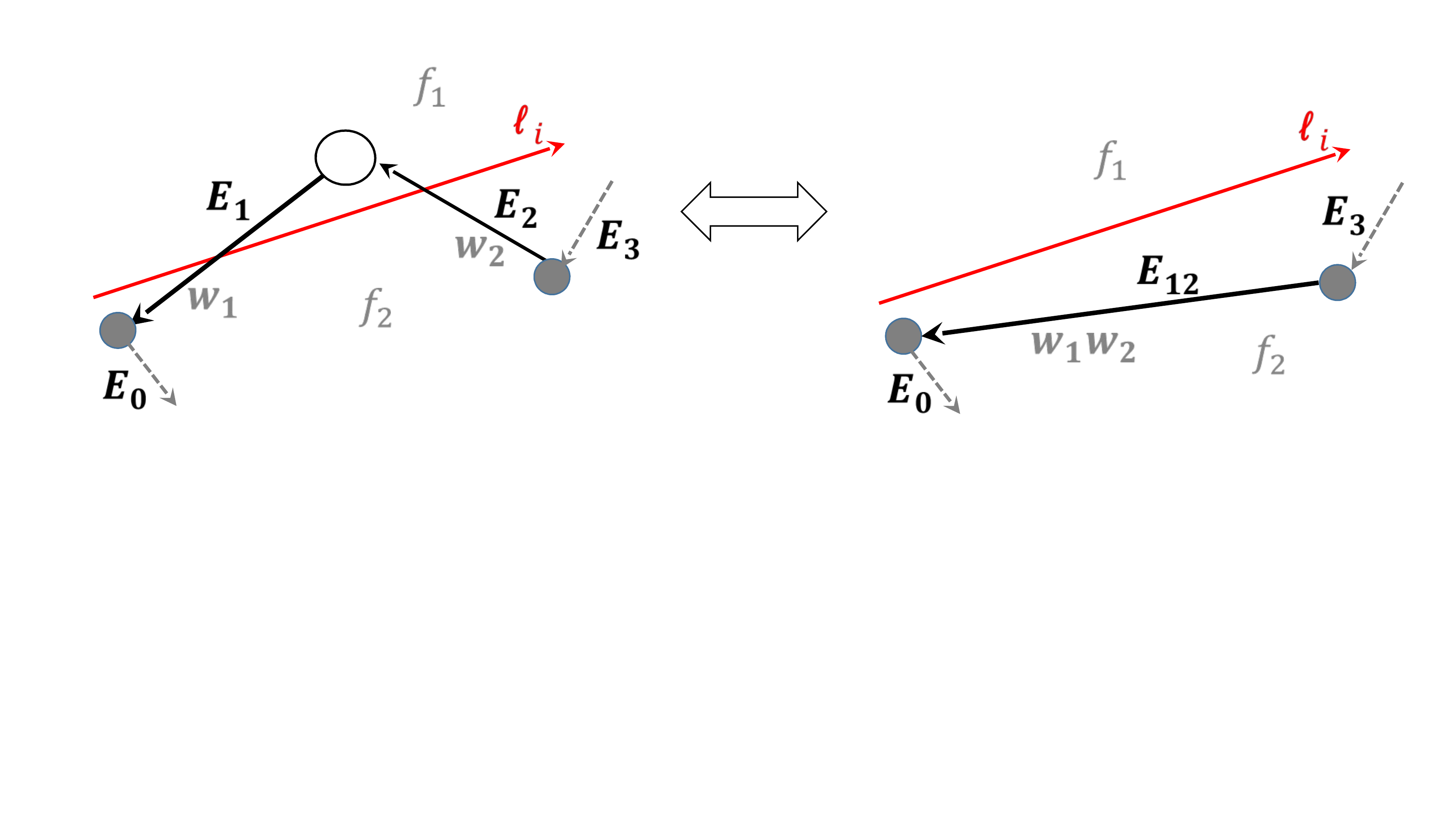}}
	\hfill
	{\includegraphics[width=0.49\textwidth]{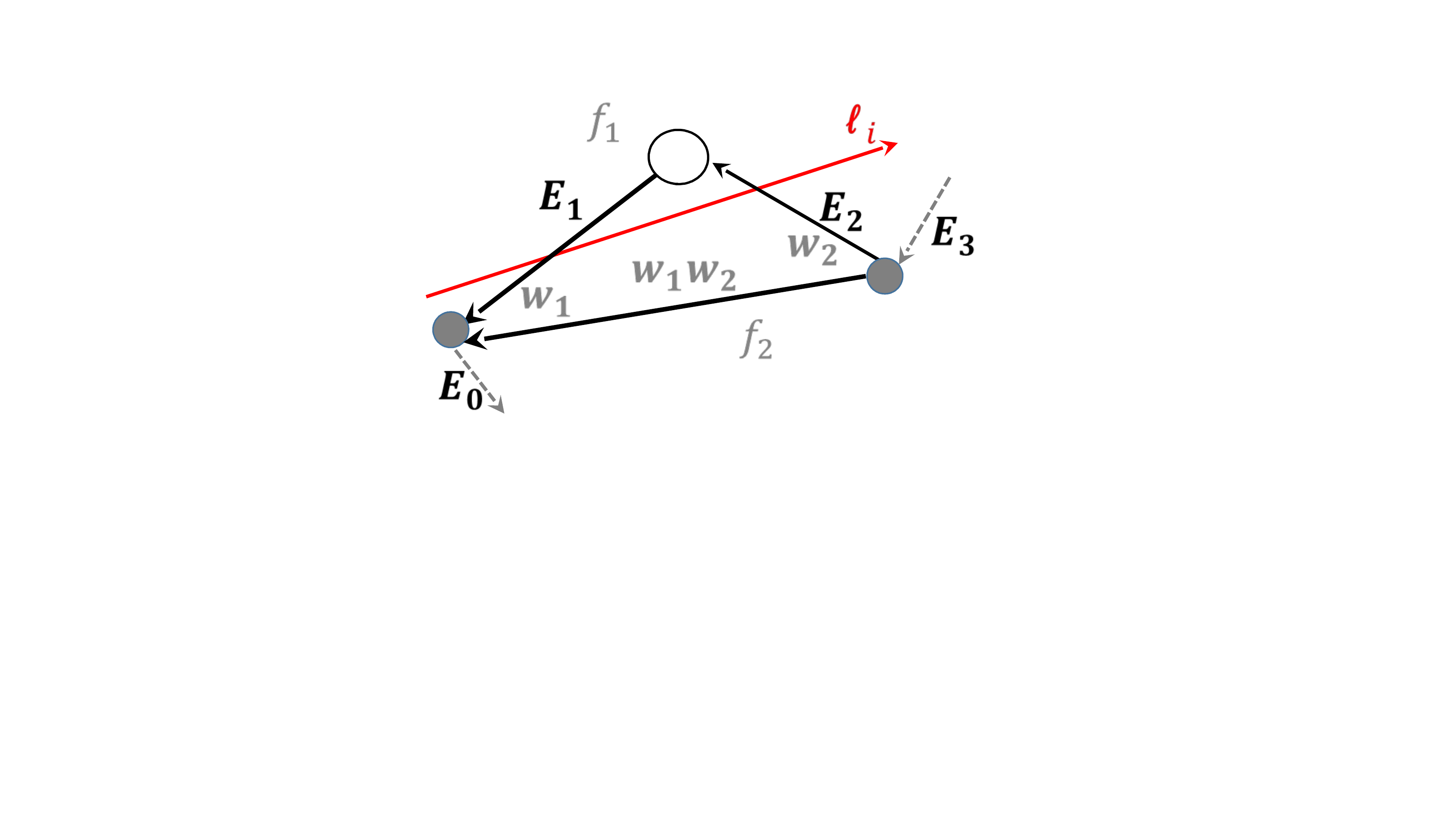}}
	\vspace{-2.1 truecm}
  \caption{\small{\sl The middle edge insertion/removal.}\label{fig:middle}}
\end{figure}
The middle edge insertion/removal consists in the addition/elimination of bivalent vertices (see Figure \ref{fig:middle}) without changing the face configuration. i.e. the triangle formed by the edges $e_1$, $e_2$, $e_{12}$ does not contain other edges of the network. Then the action of such move is trivial, since 
$\mbox{int} (e_1) +\mbox{int} (e_2) = \mbox{int} (e_{12}), \quad (\!\!\!\!\mod 2)$, 
$\mbox{wind}(e_3, e_2) +\mbox{wind}(e_2, e_1)+\mbox{wind}(e_1, e_0) =\mbox{wind}(e_3, e_{12}) +\mbox{wind}(e_{12}, e_0), \quad (\!\!\!\!\mod 2)$ so that the relation between the vectors $E_2$ and $E_{12}$ is simply, 
$$
E_{12} =(-1)^{\mbox{wind}(e_3, e_2) -\mbox{wind}(e_3, e_{12})} E_2.
$$

\smallskip

{\bf (R1) The parallel edge reduction}
The parallel edge reduction consists of the removal of two trivalent vertices of different color connected by a pair of parallel edges (see Figure \ref{fig:parall_red_poles}[top]). If the parallel edge separates two distinct faces, the relation of the face weights before and after the reduction is 
${\tilde f}_1 = \frac{f_1}{1+(f_0)^{-1}}$, ${\tilde f}_2 = f_2 (1+f_0)$,
otherwise ${\tilde f}_1 = {\tilde f}_2 = f_1 f_0$ \cite{Pos}. In both cases, for the choice of orientation in Figure \ref{fig:parall_red_poles}, the relations between the edge weights and the edge vectors respectively are 
${\tilde w}_1 = w_1(w_2+w_3)w_4$,
$ {\tilde E}_{1} = E_1 = (-1)^{\mbox{int}(e_1)+\mbox{int}(e_2)}w_1(w_2+w_3) E_4$, $
E_2 = (-1)^{\mbox{int}(e_2)}w_2 E_4$, $E_3 = (-1)^{\mbox{int}(e_2)}w_3 E_4$,
since $\mbox{wind} (e_1,e_2) =\mbox{wind} (e_1,e_3) =\mbox{wind} (e_2,e_4) =\mbox{wind} (e_3,e_4) =0$, $\mbox{int}(e_1)+\mbox{int}(e_2)+\mbox{int}(e_4) =\mbox{int}({\tilde e}_1)$ and $\mbox{int}(e_2) =\mbox{int}(e_3)$.

\begin{figure}
  \centering{\includegraphics[width=0.55\textwidth]{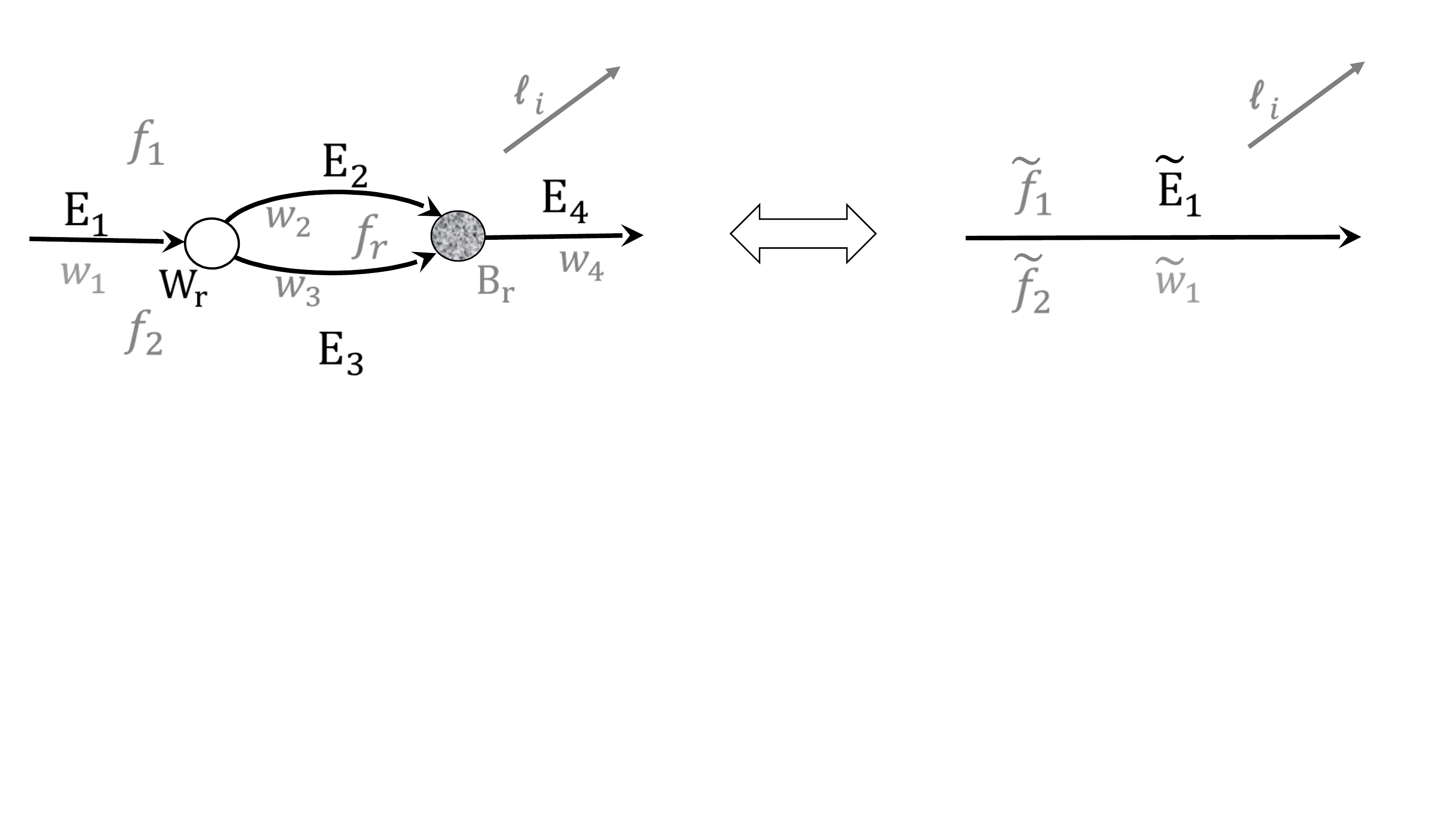}}
	\vspace{-3. truecm}
  \caption{\small{\sl The parallel edge reduction.}\label{fig:parall_red_poles}}
\end{figure}

{\bf (R2) The dipole reduction}
The dipole reduction eliminates an isolated component consisting of two vertices joined by an edge $e$ (see Figure \ref{fig:dip_leaf}[left]). The transformation leaves invariant the weight of the face containing such component. Since the edge vector at $e$ is $E_e=0$, this transformation acts trivially on the vector system.

\begin{figure}
\includegraphics[width=0.48\textwidth]{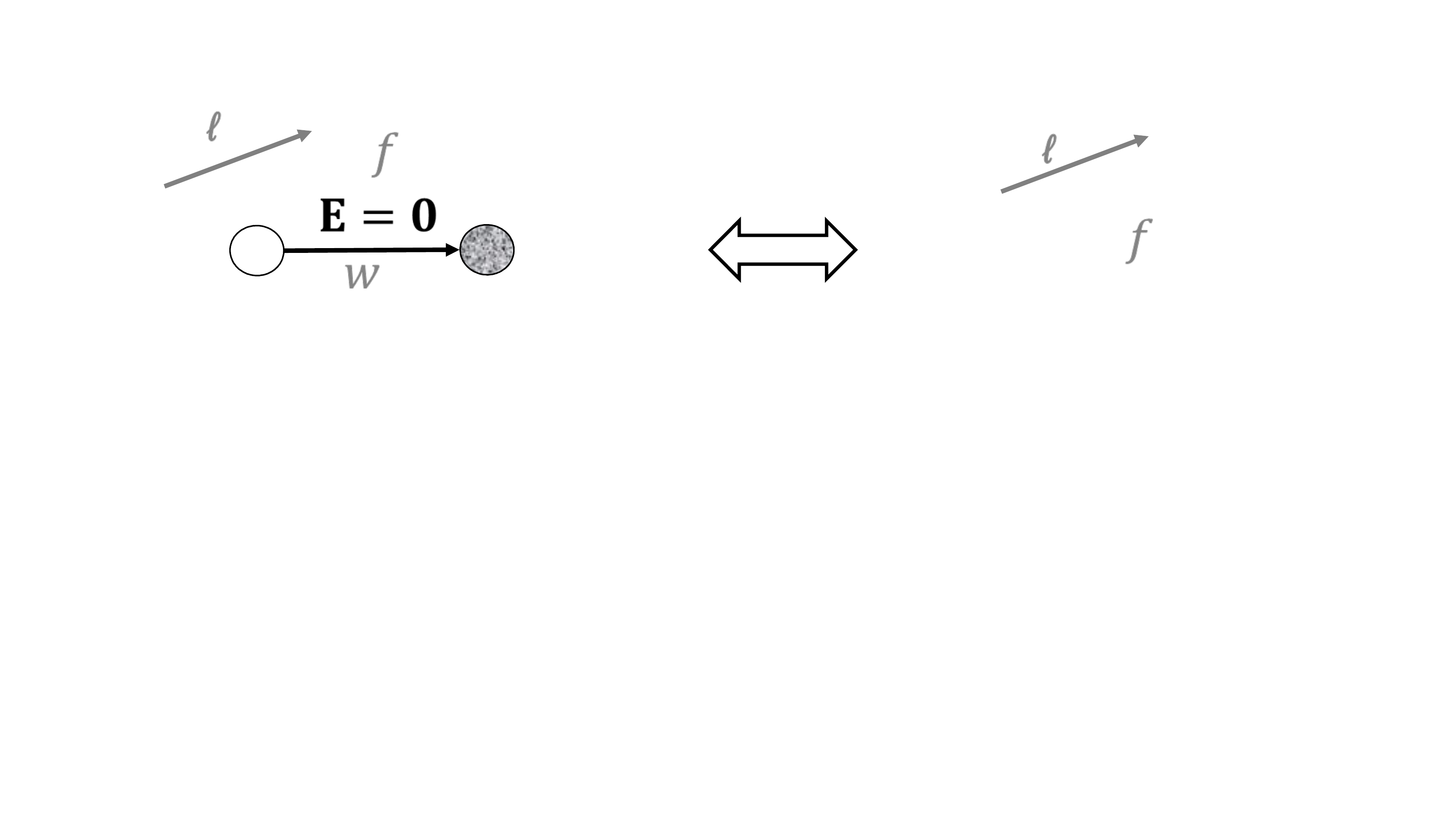}
\hfill
\includegraphics[width=0.48\textwidth]{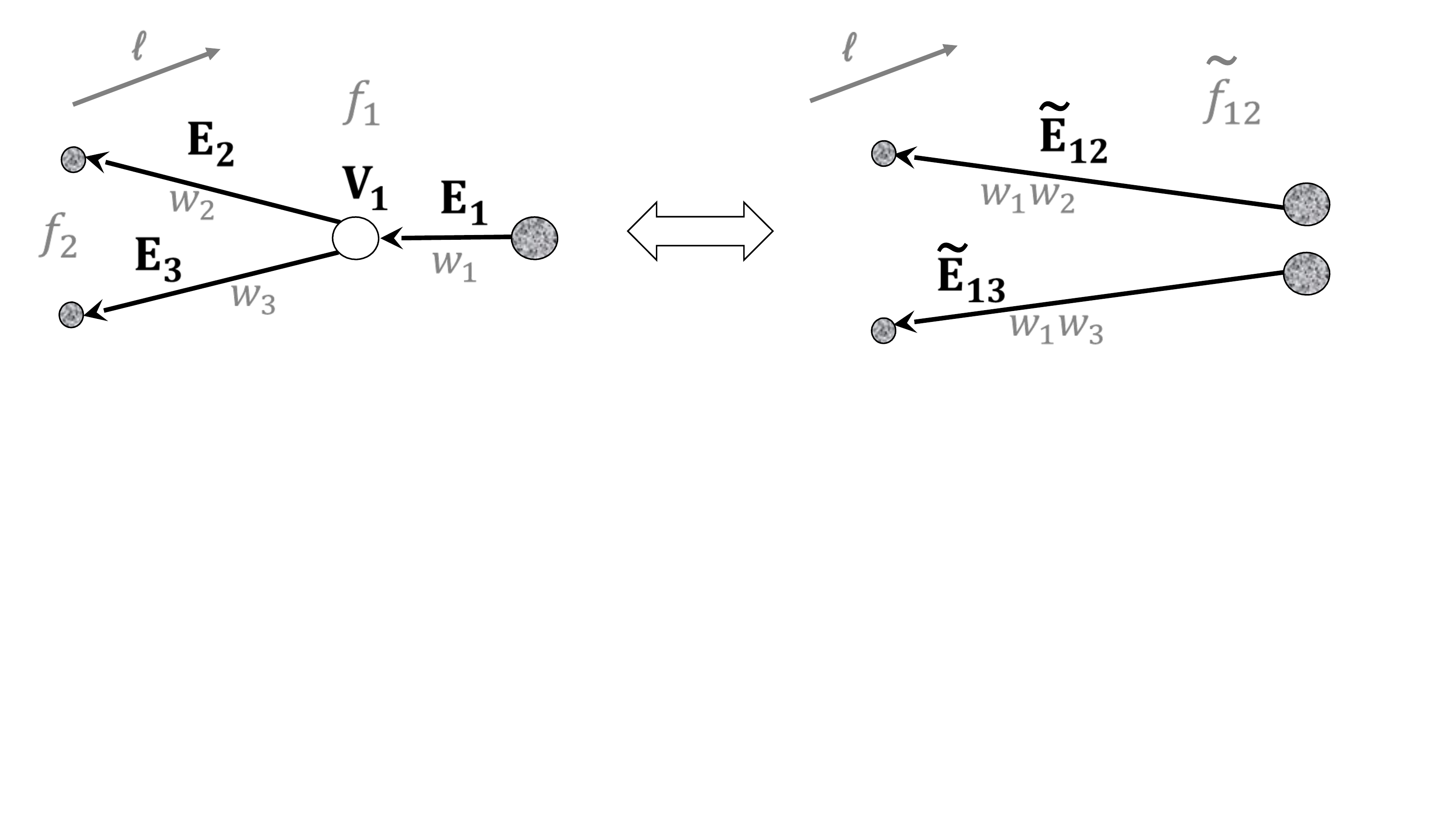}
\vspace{-2.5 truecm}
\caption{\small{\sl Left: the dipole reduction. Right: the leaf reduction.}}
\label{fig:dip_leaf}
\end{figure}

\smallskip
{\bf (R3) The leaf reduction}
The leaf reduction occurs when a network contains a vertex $u$ incident to a single edge $e_1$ ending at a trivalent vertex (see Figure \ref{fig:dip_leaf}[right]): in this case it is possible to remove $u$ and $e_1$, disconnect $e_2$ and $e_3$, assign the color of $u$ at all newly created vertices of the edges $e_{12}$ and $e_{13}$. In the leaf reduction (R3) the only non-trivial case corresponds to the situation where the faces $f_1$, $f_2$ are distinct in the initial configuration. We assume that $e_1$ is short enough, and it does not intersect the gauge rays. If we have two faces of weights $f_1$ and $f_2$ in the initial configuration, then we merge them into a single face of weight ${\tilde f}_{12} =f_1f_2$; otherwise ${\tilde f}_{12}=f_1$ and the effect of the transformation is to create new isolated components. We also assume that the newly created vertices are close enough to $V_1$, therefore the windings are not affected. Then $E_1 = {\tilde E}_{12} + {\tilde E}_{13}$ and 
${\tilde E}_{12} = w_{1} E_2$, ${\tilde E}_{13} =  w_{1} E_3$.

\section{Existence of null edge vectors on reducible networks}\label{sec:null_vectors}

Edge vectors associated to the boundary source edges are not null due to Postnikov's results if the boundary source is not isolated. On the contrary, a component of a vector associated to an internal edge 
can be equal to zero even if the corresponding boundary sink can be reached from that edge (see Example \ref{example:null}). More in general, suppose that $E_e=0$ where $e$ is an edge ending at the vertex $V$. Then, if $V$ is black, all other edges at $V$ carry null vectors; the same occurs if $V$ is bivalent white. If $V$ is trivalent white, then the other edges at $V$ carry proportional vectors.

\begin{figure}
  \centering
	{\includegraphics[width=0.4\textwidth]{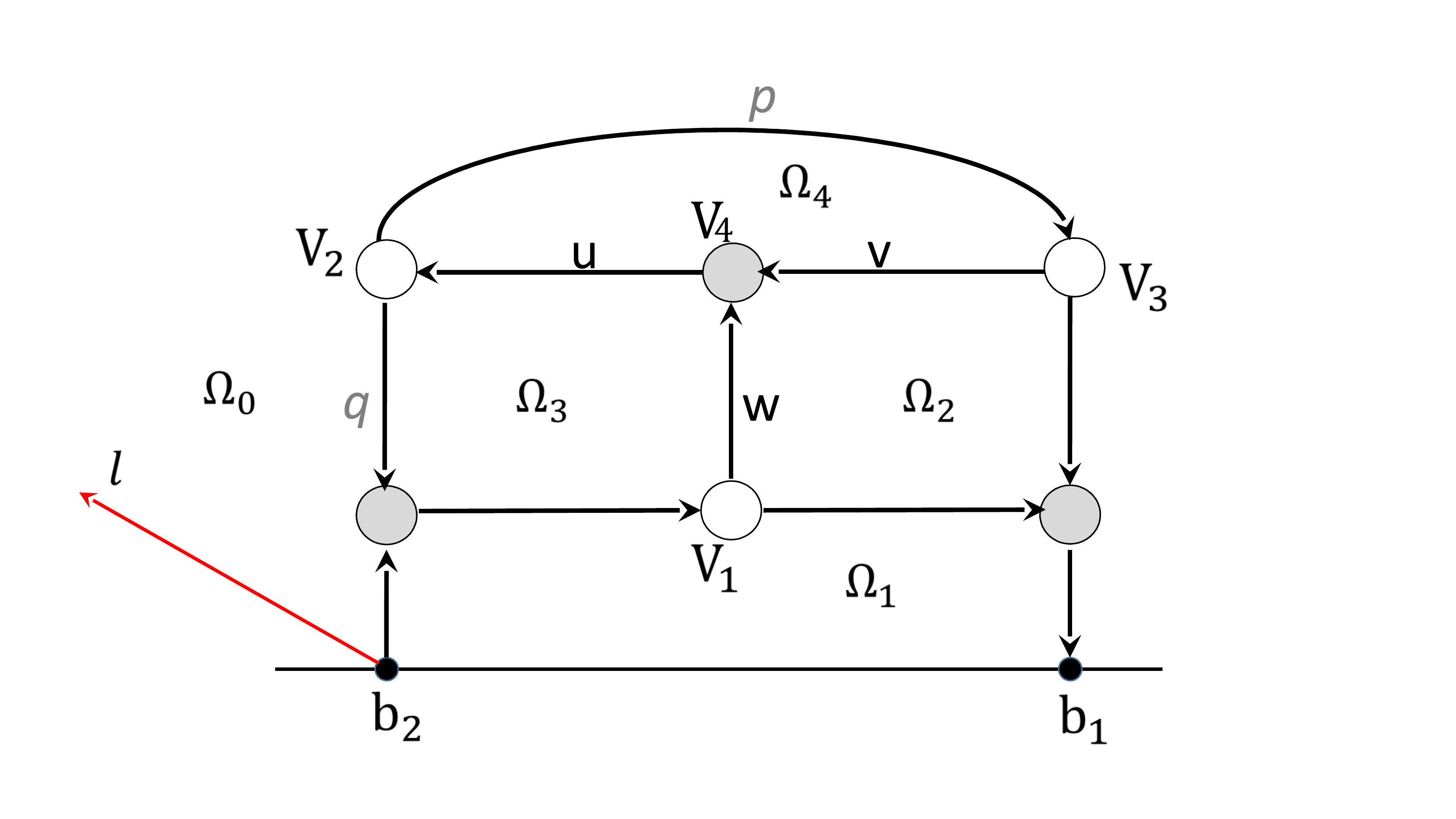}}
	\hspace{.5 truecm}
	{\includegraphics[width=0.4\textwidth]{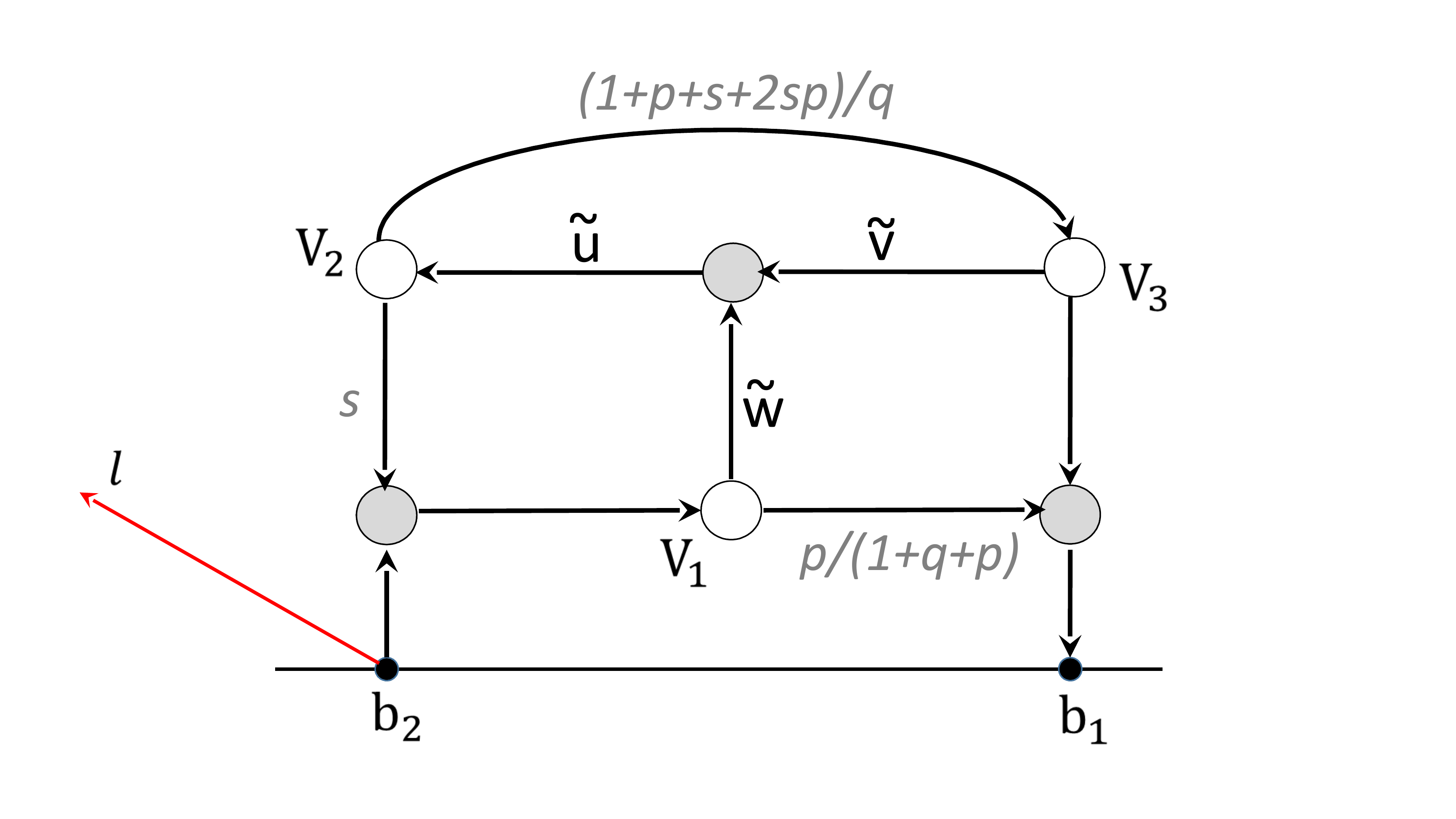}}
  \caption{\small{\sl The appearance of null vectors on reducible networks [left] and their elimination using the 
	gauge freedom for unreduced graphs of Remark \ref{rem:gauge_freedom} [right].}\label{fig:zero-vector}}
\end{figure}

Of course, if the network possesses either isolated boundary sources or components isolated from the boundary, null-vectors are unavoidable. In \cite{AG3}, we provide a recursive construction of edge vectors for canonically oriented Le--networks and obtain as a by-product that null edge vectors are forbidden if the Le--network represents a point in an irreducible positroid cell. The latter property indeed is shared by systems of edge vectors on acyclically oriented networks as a consequence of the following Theorem:

\begin{theorem}\label{thm:null_acyclic}\textbf{Edge vectors on acyclically oriented networks}
Let   $({\mathcal N}, {\mathcal O}, \mathfrak{l})$ be an acyclically oriented plabic network, which possesses neither internal sources nor sinks, representing a point in an irreducible positroid cell where $\mathcal O=\mathcal O(I)$. Then all edge vectors $E_e$ (both the internal and the boundary ones) are not-null. Moreover, in such case (\ref{eq:tal_formula}) in Theorem \ref{theo:null} simplifies to
\begin{equation}\label{eq:edge_parity}
E_{e}= \displaystyle\sum\limits_{j\in \bar I} \Big(\sum\limits_{F\in {\mathcal F}_{e,b_j}(\mathcal G)} \big(-1\big)^{\mbox{wind}(F)+\mbox{int}(F)}\ w(F)\Big) E_j
=\sum\limits_{j\in \bar I} \Big((-1)^{\sigma(e, b_j)}\sum\limits_{F\in {\mathcal F}_{e,b_j}(\mathcal G)} \ w(F)\Big) E_j,
\end{equation}
where the sum runs on all directed paths $F$ starting at $e$ and ending at $b_j$ and $\sigma(e, b_j)$ is the same for all $F$s. 
In particular (\ref{eq:edge_parity}) holds for all edges of Le-networks representing points in irreducible positroid cells.
\end{theorem}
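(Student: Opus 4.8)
The plan is to derive the statement from Theorem~\ref{theo:null} in three stages: first simplify the Talaska-type expression using acyclicity, then prove that the sign $\mbox{wind}(F)+\mbox{int}(F)$ is a well-defined constant $\sigma(e,b_j)$ depending only on $e$ and $b_j$, and finally deduce non-vanishing from a reachability argument.

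First I would observe that an acyclic orientation admits no non-trivial conservative flow: by Definition~\ref{def:cons_flow} such a flow is a union of simple directed cycles, and an acyclic orientation has none. Hence $\sum_{C\in {\mathcal C}(\mathcal G)} w(C)=1$, so the denominator in (\ref{eq:tal_formula}) equals $1$, and by the discussion following Definition~\ref{def:cons_flow} every edge flow $F\in {\mathcal F}_{e,b_j}(\mathcal G)$ reduces to a single directed path from $e$ to $b_j$. This already turns (\ref{eq:tal_formula}) into the first equality of (\ref{eq:edge_parity}).

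The core step is to show that $\mbox{wind}(F)+\mbox{int}(F)\pmod 2$ does not depend on the path $F:e\to b_j$, extending the argument of Corollary~\ref{cor:bound_source} from boundary-source edges to an arbitrary edge $e$. Given two directed paths $\mathcal P,\tilde{\mathcal P}$ from $e$ to $b_j$, both are acyclic (the whole orientation is), and they share the first edge $e$ as well as the unique edge incident to the degree-one vertex $b_j$. Closing each path with one and the same auxiliary return arc $R$ from $b_j$ back to the starting vertex $V_e$ of $e$ produces two closed directed loops whose only differing part is $\mathcal P$ versus $\tilde{\mathcal P}$. Invoking the two parity facts already used in the proof of Theorem~\ref{theo:consist} — the total winding of a simple directed cycle is $1\pmod 2$, and its total intersection number with the source rays $\mathfrak l_{i_r}$ is $0\pmod 2$ — the junction contributions at $V_e$ and at $b_j$ coincide for the two loops and cancel, leaving $\mbox{wind}(\mathcal P)+\mbox{int}(\mathcal P)\equiv\mbox{wind}(\tilde{\mathcal P})+\mbox{int}(\tilde{\mathcal P})\pmod 2$. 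I expect the main obstacle to be the bookkeeping when $\mathcal P$ and $\tilde{\mathcal P}$ overlap on intermediate edges, or when the closed loops fail to be simple; this is handled by passing to the symmetric difference of the two paths, which is a disjoint union of simple cycles to which the per-cycle parity facts apply term by term. Denoting the common value by $\sigma(e,b_j)$ then yields the second equality in (\ref{eq:edge_parity}).

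It remains to prove non-vanishing. By the previous step each component equals $(E_e)_j=(-1)^{\sigma(e,b_j)}\sum_{F:e\to b_j} w(F)$, a subtraction-free sum of products of positive weights, hence non-zero as soon as at least one directed path $e\to b_j$ exists. I would then use reachability: since $\mathcal O$ is a perfect orientation it has no internal sources or sinks, so following the orientation from $e$ one never terminates at an interior vertex and, by acyclicity, never cycles; in a finite graph the walk must reach a boundary vertex through an incoming edge, that is, a boundary sink $b_j$ with $j\in\bar I$. Thus some component of $E_e$ is non-zero, whence $E_e\neq 0$; irreducibility guarantees the absence of isolated boundary sources, so the boundary-source edges are non-null as well, their components being identified with Postnikov's boundary measurements in Corollary~\ref{cor:bound_source}. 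Finally, since the canonical orientation of a Le-network is acyclic and perfect, the hypotheses hold and the last assertion follows.
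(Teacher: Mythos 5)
Your steps (i) and (iii) are correct and agree with the paper: acyclicity makes the denominator in (\ref{eq:tal_formula}) equal to $1$ and identifies edge flows with directed paths, and reachability of some boundary sink (no internal sinks, acyclicity, finiteness) plus the equal-sign property gives non-vanishing. The genuine problem is in your core step (ii), the parity claim. Your main line closes the two paths $\mathcal P,\tilde{\mathcal P}:e\to b_j$ with one common return arc $R$ from $b_j$ to $V_e$ and applies to both loops the facts ``total winding $\equiv 1$, total intersection number $\equiv 0$ modulo $2$''. Those facts hold for \emph{simple} closed curves, so you need $R$ to cross neither $\mathcal P$ nor $\tilde{\mathcal P}$, and you never establish that such a \emph{common} arc exists. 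This is not automatic: if $R$ crosses $\tilde{\mathcal P}$ transversally, Whitney's formula gives $\mbox{wind}(\tilde{\mathcal P}\cup R)\equiv 1+\#(R\cap\tilde{\mathcal P}) \pmod 2$, so the two loop identities differ by exactly the crossing parity you would need to control. (A good $R$ does exist, but proving it requires showing that, by acyclicity plus planarity, $\mathcal P\cup\tilde{\mathcal P}$ is a chain of ``lenses'' whose bounded faces contain no part of either path, so that $V_e$ and $b_j$ lie on the closure of a single face; that is a real argument, not bookkeeping.)

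Your fallback for this obstacle does not work: the symmetric difference of the two paths is a union of \emph{undirected} simple cycles, each consisting of a segment of $\mathcal P$ together with a segment of $\tilde{\mathcal P}$ traversed backwards --- in an acyclic orientation there are no directed cycles at all. The per-cycle parity facts you quote (from the proof of Theorem \ref{theo:consist} and Corollary \ref{cor:bound_source}) concern consistently directed simple cycles (conservative flows), and the gauge winding is \emph{not} invariant under reversal of a segment: it picks up endpoint corrections of $\gamma_2$-type, cf. (\ref{eq:a10}) and Lemma \ref{lemma:equiv_rel}, and one must additionally track the junction windings where the two paths diverge and reconverge. So ``apply the parity facts term by term'' is precisely the unproved content, not a routine reduction. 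The paper's proof sidesteps this geometry altogether: since the network is acyclic with no internal sources, there is a directed path $P_0$ from some boundary source $b_i$ to $e$; by acyclicity $P_0$ meets $P$ and $\tilde P$ only at $e$; then Corollary \ref{cor:bound_source}, already proven, applies to the boundary-to-boundary concatenations $P_0\cup P$ and $P_0\cup\tilde P$, and additivity of $\mbox{wind}$ and $\mbox{int}$ under concatenation gives $\mbox{wind}(P)+\mbox{int}(P)\equiv\mbox{wind}(\tilde P)+\mbox{int}(\tilde P)\pmod 2$. Either prove the existence of the common non-crossing arc $R$, or replace your step (ii) by this backward-extension argument, for which you already have all the ingredients.
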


\begin{proof}
  Let $e\in {\mathcal N}$ be an internal edge and let $b_j$ be a boundary sink such that there exists a directed path $P$ starting at $e$ and ending at $b_j$.  In order to prove (\ref{eq:edge_parity}), we need to show that $\mbox{wind}(F)+\mbox{int}(F)$ has the same parity for all directed paths from $e$ to $b_j$. Acyclicity and the absence of internal sources or sinks implies that there exists a directed path $P_0$ starting at a boundary source $b_i$ to $e$. Due to acyclicity, $P_0$ has no common edges with $P$ except $e$. Moreover, any other directed path ${\tilde P}$ from $e$ to $b_j$ may have a finite number of edges in common with $P$ and has no edge in common with $P_0$ except $e$. 

Therefore, using Corollary \ref{cor:bound_source}, we have:
\[
\begin{array}{c}
\mbox{int} (P_0)+ \mbox{int}( P) =\mbox{int} (P_0) + \mbox{int} ({\tilde P}) \quad (\!\!\!\!\!\!\mod 2),\\
\mbox{wind} (P_0) +\mbox{wind} (P) =\mbox{wind} (P_0\cup P) = \mbox{wind} (P_0\cup {\tilde P})
= \mbox{wind} (P_0) + \mbox{wind} ({\tilde P}) \quad (\!\!\!\!\!\!\mod 2),
\end{array}
\]
and the statement follows.
\end{proof}

The absence of null edge vectors for a given network is independent on its orientation and on the choice of ray direction, of vertex gauge and of weight gauge, because of the transformation rules of edge vectors established in Sections \ref{sec:gauge_ray}, \ref{sec:orient} and \ref{sec:different_gauge}. 
We summarize all the above properties of edge vectors in the following Proposition:

\begin{proposition}\label{prop:null_vectors}\textbf{Null edge vectors and changes of orientation, ray direction, weight and vertex gauges in $\mathcal N$.}
Let $({\mathcal N}, {\mathcal O}, \mathfrak{l})$ be a perfectly oriented plabic network with gauge direction $\mathfrak l$.
Let $E_e$ be its edge vector system with respect to the canonical basis at the boundary sink vertices. Then, $E_e\not =0$ on  $({\mathcal N}, {\mathcal O}, \mathfrak{l})$ if and only if $E_e\not =0$ 
on  $({\mathcal N}^{\prime}, {\mathcal O}^{\prime}, \mathfrak{l}^{\prime})$, where ${\mathcal N}^{\prime}$ is obtained from ${\mathcal N}$ changing either its orientation or the gauge ray direction or the weight gauge or the vertex gauge. 
\end{proposition}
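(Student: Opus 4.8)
The plan is to reduce the claim to elementary transformations and to exploit that three of the four gauge freedoms act on each individual edge vector by multiplication by a nonzero scalar, so that they manifestly preserve nullity, leaving only a pivot-changing reorientation as the delicate case. Concretely, a weight-gauge change is a composition of single-vertex rescalings (\ref{eq:gauge}), a vertex-gauge change a composition of single-vertex moves, a rotation of $\mathfrak l$ a composition of elementary passages of one ray across one edge or one vertex, and, following Postnikov, any reorientation a finite composition of reversals either along a simple cycle $\mathcal Q_0$ or along a non-self-intersecting path $\mathcal P_0$ joining a boundary source to a boundary sink. Since ``$E_e\neq0$'' is transitive along compositions, it suffices to preserve it at each elementary step.

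For the gauge ray direction, Proposition~\ref{prop:rays} gives $E'_e=(-1)^{\mbox{cr}(V_e)+\mbox{par}(e)}E_e$; for the weight gauge, Lemma~\ref{lem:weight_gauge} gives $\tilde E_e=E_e$ or $\tilde E_e=t_uE_e$ (respectively $t_vE_e$) with $t_u,t_v>0$; for the vertex gauge, Lemma~\ref{lem:vertex_gauge} gives $\tilde E_e=E_e$ or a sign times $E_e$. In all three cases the new vector is a nonzero multiple of the old one, so $\tilde E_e=0$ if and only if $E_e=0$. The same holds immediately for a cycle reversal: Lemma~\ref{lemma:cycle} yields $\hat E_e=(-1)^{\gamma(e)}E_e$ or $(-1)^{\gamma(e)}w_e^{-1}E_e$, again a nonzero scalar, and since a cycle reversal leaves the sink set $\bar I$ and the boundary data $E_j$ unchanged, no nullity is created or destroyed.

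The remaining case, a path reversal, is where I expect the real obstacle, because the pivot set changes to $I'=(I\setminus\{i_0\})\cup\{j_0\}$ and so the boundary data themselves differ in the two orientations. Here I would use the two-step structure of Lemma~\ref{lemma:path}: the transformed system satisfies $\hat E_e=(-1)^{\gamma(e)}\tilde E_e$ off $\mathcal P_0$ and $\hat E_e=(-1)^{\gamma(e)}w_e^{-1}\tilde E_e$ on $\mathcal P_0$, whence $\hat E_e=0$ if and only if $\tilde E_e=0$, and it remains only to compare $\tilde E_e$ with $E_e$ inside the \emph{same} network $(\mathcal N,\mathcal O,\mathfrak l)$ for two different choices of sink data. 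The key point is Theorem~\ref{theo:null}: every edge-vector system is $E_e=\sum_{j\in\bar I}\kappa_{e,j}B_j$ with scalar coefficients $\kappa_{e,j}$, the rational expressions in (\ref{eq:tal_formula}), that depend only on $(\mathcal N,\mathcal O,\mathfrak l)$ and not on the sink vectors $B_j$. Consequently, for \emph{any} linearly independent family $\{B_j\}_{j\in\bar I}$, one has $E_e=0$ if and only if $\kappa_{e,j}=0$ for all $j$. I would then verify that the modified data of (\ref{eq:orient1}), $\tilde B_j=E_j$ for $j\neq j_0$ and $\tilde B_{j_0}=E_{j_0}-(A^{r_0}_{j_0})^{-1}A[r_0]$, is again linearly independent, since $\tilde B_{j_0}$ carries a nonzero component along the pivot direction $E_{i_0}$ that no other $\tilde B_j$ has, while $A^{r_0}_{j_0}\neq0$ because the reversal yields a genuine perfect orientation. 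Therefore $\tilde E_e=0$ if and only if all $\kappa_{e,j}=0$ if and only if $E_e=0$, which closes the path case and hence the Proposition.

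As a check, one direction of the path case can also be read off directly from Theorem~\ref{theo:orient}: evaluating $\hat E_e=\alpha_eE_e+\sum_r c^r_eA[r]$ at the source columns $I$ of $\mathcal O$, where $E_e$ vanishes and the RREF rows reduce to standard basis vectors, shows that $\hat E_e=0$ forces every $c^r_e=0$, hence $\alpha_eE_e=0$ and $E_e=0$. The converse is the direction that genuinely requires the linear-independence argument above, which is why I prefer to route the whole path case through Lemma~\ref{lemma:path}. In either presentation the conceptual content is the same: nullity of $E_e$ is the vanishing of the intrinsic coefficients $\kappa_{e,j}$, a condition insensitive to any reparametrization acting on edge vectors either by a nonzero scalar or by a change to another linearly independent boundary frame.
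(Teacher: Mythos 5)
Your proposal is correct and follows essentially the same route as the paper: decompose into elementary transformations, observe that ray-direction, weight-gauge, vertex-gauge and cycle-reversal changes act on each $E_e$ by a nonzero scalar, and handle the path reversal via Lemma \ref{lemma:path} together with the fact that $\tilde E_e$ has the same intrinsic coefficients as $E_e$ with respect to a different linearly independent boundary frame. The only difference is that you spell out details the paper leaves implicit (linear independence of the modified frame $\{\tilde B_j\}$ and $A^{r_0}_{j_0}\neq 0$), which is a faithful completion rather than a different argument.
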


The above statement follows from the fact that any change in the gauge freedoms - ray direction, weight gauge, vertex gauge - or in the network orientation acts by non--zero multiplicative constant on the edge vector, provided we use Lemma \ref{lemma:path} to represent the edge vectors when changing of base. 
This property suggests the fact that each graph possesses a unique system of relations up to gauge equivalence, and we shall prove that this is indeed the case in the continuation of this paper \cite{AG7}.
In the next Corollary we explicitly discuss the special case of acyclically orientable networks.

\begin{corollary}\label{cor:null_changes}\textbf{Characterization of edge vectors on acyclically orientable networks}
Let $(\mathcal N, \mathcal O, \mathfrak l)$ be an acyclically orientable plabic network representing a point in irreducible positroid cell. Then, the edge vector components are subtraction--free rational in the weights for any choice of orientation and gauge ray direction and any given change of gauge or orientation on the network acts on the right hand side of (\ref{eq:edge_parity}) with a non-zero multiplicative factor which just depends on the edge. 
\end{corollary}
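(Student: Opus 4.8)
The plan is to reduce every orientation and gauge choice to a fixed acyclic orientation, where Theorem~\ref{thm:null_acyclic} already supplies the subtraction--free form (\ref{eq:edge_parity}), and then to propagate this along the explicit transformation rules of Sections~\ref{sec:gauge_ray}--\ref{sec:different_gauge}. Concretely, I would fix an acyclic perfect orientation $\mathcal O_0$ (which exists by hypothesis). By Theorem~\ref{thm:null_acyclic} the denominator $\sum_{C}w(C)$ equals $1$ and each coefficient of $E_j$ in (\ref{eq:edge_parity}) is $(-1)^{\sigma(e,b_j)}\sum_{F}w(F)$, i.e. a fixed sign times a subtraction--free polynomial; thus in $\mathcal O_0$ the components of $E_e$ are $\pm$(subtraction--free), which settles the statement in this base and gives the reference expression to which all later factors attach.

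First I would record that the three gauge freedoms and the cyclic reorientations act by a nonzero factor depending only on the edge and preserving the $\pm$(subtraction--free) structure: changing $\mathfrak l$ multiplies $E_e$ by $(-1)^{\mathrm{cr}(V_e)+\mathrm{par}(e)}$ (Proposition~\ref{prop:rays}), the weight gauge multiplies it by a positive scalar $t_U$ or $t_V$ (Lemma~\ref{lem:weight_gauge}), the vertex gauge by a sign (Lemma~\ref{lem:vertex_gauge}), and a reorientation along a simple cycle $\mathcal Q_0$ by $(-1)^{\gamma(e)}$ or $(-1)^{\gamma(e)}w_e^{-1}$ (Lemma~\ref{lemma:cycle}). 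Since each such factor is a sign times a Laurent monomial in the weights times a positive gauge constant, multiplying the acyclic expression by it leaves the components $\pm$(subtraction--free) and manifestly nonzero, and the factor is carried by the edge alone.

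The main obstacle is the reorientation along a non--self--intersecting path $\mathcal P_0$ from a boundary source $i_0$ to a boundary sink $j_0$ (Lemma~\ref{lemma:path}), because it changes the base from $I$ to $I'=(I\setminus\{i_0\})\cup\{j_0\}$ and replaces the boundary datum at $b_{j_0}$ by $E_{j_0}-\tfrac{1}{A^{r_0}_{j_0}}A[r_0]$, an expression that is \emph{not} subtraction--free. Here Lemma~\ref{lemma:path} only furnishes a multiplicative relation $\hat E_e=(-1)^{\gamma(e)}\tilde E_e$ (or with an extra $w_e^{-1}$) to the auxiliary system $\tilde E_e$ carrying the modified boundary data, so one cannot naively read off subtraction--freeness of $\hat E_e$. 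I would resolve this by arranging the chain $\mathcal O_0\to\cdots\to\mathcal O$ so that every intermediate orientation is again acyclic: then each intermediate edge vector is governed directly by Theorem~\ref{thm:null_acyclic} and is $\pm$(subtraction--free), while Lemma~\ref{lemma:path} shows that consecutive systems differ by a nonzero edge--dependent factor, the apparent subtraction in the modified boundary condition being absorbed precisely because the \emph{target} acyclic system is once more controlled by (\ref{eq:edge_parity}). For a genuinely cyclic target $\mathcal O$ the delicate point is the existence of an acyclic orientation sharing the pivot set of $\mathcal O$, from which $\mathcal O$ is reached by a single cyclic reorientation (Lemma~\ref{lemma:cycle}); establishing that such an intermediate acyclic orientation can always be selected along the chain, so that subtraction--freeness is never lost, is the part I expect to require the most care.

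Composing the elementary factors then yields, for any orientation $\mathcal O$ and any gauge ray $\mathfrak l$, an identity $E_e=c_e\cdot[\,\text{RHS of }(\ref{eq:edge_parity})\,]$ in which $c_e$ is a sign times a Laurent monomial in the weights times positive gauge constants and depends only on $e$. This simultaneously proves that the components are subtraction--free rational and that every change of gauge or orientation acts on the right--hand side of (\ref{eq:edge_parity}) by a nonzero multiplicative factor depending only on the edge, as claimed; the nonvanishing throughout is already guaranteed by Proposition~\ref{prop:null_vectors}.
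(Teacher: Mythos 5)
Your skeleton (acyclic base case via Theorem~\ref{thm:null_acyclic}, then propagation through the gauge freedoms and cycle reorientations, which act by edge-dependent signs, positive constants, or $(-1)^{\gamma(e)}w_e^{-1}$) matches the paper's proof, and you correctly isolate the path reorientation of Lemma~\ref{lemma:path} as the only delicate step. However, your proposed resolution of that step has a genuine gap, and it is not a repairable technicality: your chain $\mathcal O_0\to\cdots\to\mathcal O$ requires, for a cyclic target orientation $\mathcal O$ with pivot set $I'$, the existence of an \emph{acyclic} perfect orientation with the same pivot set $I'$, reached last by a single cycle reorientation. Such an orientation need not exist. Take the standard reduced plabic graph for the top cell of $Gr^{\mbox{\tiny TP}}(2,4)$: four trivalent internal vertices $V_1,V_2,V_3,V_4$ of alternating colors forming a square, with $V_i$ joined to $b_i$. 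The base $\{1,3\}$ gives an acyclic perfect orientation, so the graph is acyclically orientable and the cell is irreducible; but in any perfect orientation with sources $b_2,b_4$, perfectness forces each $V_i$ to have exactly one incoming and one outgoing edge among the four square edges, i.e. the square is a directed $4$-cycle. Hence \emph{every} perfect orientation with pivot set $\{2,4\}$ is cyclic, the intermediate orientation you need cannot be selected, and your mechanism for absorbing ``the apparent subtraction in the modified boundary condition'' never gets to apply to such targets.

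The paper closes this step without any intermediate acyclic orientations. The auxiliary system $\tilde E_e$ of Lemma~\ref{lemma:path} lives on the \emph{original} (acyclic) orientation, and by uniqueness of the solution of the linear system (Theorem~\ref{theo:consist}, Theorem~\ref{theo:null}) it is the linear combination with the \emph{same} coefficients as $E_e$, only taken with respect to the modified set of linearly independent boundary vectors. The subtraction hidden in the datum $E_{j_0}-\frac{1}{A^{r_0}_{j_0}}A[r_0]$ therefore never enters the coefficients appearing in (\ref{eq:edge_parity}); it only changes which vectors those coefficients multiply. Since $\hat E_e$ equals $(-1)^{\gamma(e)}\tilde E_e$ or $(-1)^{\gamma(e)}w_e^{-1}\tilde E_e$ by (\ref{eq:hat_E_P}), the change of orientation acts on the right-hand side of (\ref{eq:edge_parity}) precisely by a non-zero factor depending only on the edge, which is the assertion. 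If you wish to keep your chain-based architecture, replace the ``all intermediates acyclic'' requirement by this coefficient-preservation argument at each path reorientation; the rest of your proposal (gauge freedoms, cycle reorientations, non-vanishing via Proposition~\ref{prop:null_vectors}) then goes through as written.
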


\begin{proof}
If $\mathcal N$ is acyclically oriented, the statement follows from Theorem \ref{thm:null_acyclic}.
The unique case which is not completely trivial is the change of orientation along a directed path from $i_0$ to $j_0$. Then, according to the proof of Lemma~\ref{lemma:path}, $\hat E_e$ differs from $\tilde E_e$ by a non-zero multiplicative factor (see Equation (\ref{eq:hat_E_P}), and $\tilde E_e$ is just a linear combination with the same coefficients as  $E_e$ with respect to a different set of linearly independent vectors at the boundary sinks.
\end{proof}

In Section~\ref{sec:moves_reduc}, we discussed the effect of Postnikov moves and reductions on the transformation rules of edge vectors on equivalent networks. In particular, moves (M1), (M2)-flip and (M3) preserve both the plabic class and the acyclicity of the network.

\begin{corollary}\label{cor:Le_net}\textbf{Absence of null vectors for plabic networks equivalent to the Le-network.}
Let the positroid cell $\S$ be irreducible and let the plabic network  $({\mathcal N},{\mathcal O},\mathfrak l)$ represent a point in  $\S$ and be equivalent to the Le-network via a finite sequence of moves (M1), (M3) and flip moves (M2). Then ${\mathcal N}$ does not possess null edge vectors.
\end{corollary}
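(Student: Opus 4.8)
The plan is to reduce the statement to Theorem~\ref{thm:null_acyclic} by combining the preservation of acyclicity under the three allowed moves with the orientation--independence of the non--nullness property. First I would note that the Le--network, equipped with its canonical orientation, is \emph{acyclically} oriented and represents a point in the irreducible cell $\S$; moreover, perfectness of the orientation (each internal vertex has exactly one incoming or one outgoing edge) guarantees the absence of internal sources and sinks. Hence the hypotheses of Theorem~\ref{thm:null_acyclic} are met, and the canonically oriented Le--network has no null edge vectors.

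Next I would invoke the fact recalled just before the statement, that the moves (M1), (M3) and the flip form of (M2) preserve both the plabic class and the acyclicity of the network, while leaving the boundary measurement map (hence the represented point and the positroid cell) invariant. Applying any finite sequence of these moves to the acyclically oriented Le--network therefore yields a plabic network that again admits an acyclic perfect orientation and still represents a point in the same irreducible cell $\S$. In particular, the network $\mathcal N$ of the corollary is acyclically orientable.

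Finally I would use Proposition~\ref{prop:null_vectors}: the occurrence of a null edge vector is independent of the chosen orientation, gauge ray direction, weight gauge and vertex gauge. This lets me test $\mathcal N$ in one of its acyclic orientations, where Theorem~\ref{thm:null_acyclic} again forbids null edge vectors, and then transport the conclusion back to the given data $({\mathcal N},{\mathcal O},\mathfrak l)$. This chains the three cited results into the claimed absence of null vectors.

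The only point requiring care is the apparent tension with the remark in Section~\ref{sec:moves_reduc} that a flip move can create or eliminate null edge vectors. The resolution is that that remark concerns the general, possibly cyclic or reducible, situation, whereas here the hypotheses force the transformed network to stay acyclic and to represent a point in an irreducible cell; under those constraints Theorem~\ref{thm:null_acyclic} rules out null vectors, so none can actually be produced. I expect the main obstacle to be purely a matter of bookkeeping: making sure the acyclic orientation is genuinely transported across each individual move (M1), (M2)--flip and (M3), rather than merely asserted. Since this transport is exactly the preservation of acyclicity cited above, the argument goes through, and the conclusion holds in particular for every network equivalent to the Le--network of an irreducible cell through such moves.
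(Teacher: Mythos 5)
Your proposal is correct and follows essentially the same route as the paper: the corollary is stated there as an immediate consequence of the observation that moves (M1), (M2)--flip and (M3) preserve acyclicity, combined with Theorem~\ref{thm:null_acyclic} for acyclically oriented networks over irreducible cells and the orientation/gauge independence of non--nullness from Proposition~\ref{prop:null_vectors}. Your closing remark correctly resolving the apparent conflict with the flip--move example of null--vector creation (which concerns non--acyclic, reducible configurations) is a sensible clarification but does not change the argument.
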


Null-vectors may just appear in reducible not acyclically orientable networks as in the example of Figure \ref{fig:zero-vector}. We plan to discuss thoroughly the mechanism of creation of null edge vectors in a future publication.
Edges carrying null vectors are contained in connected maximal subgraphs such that every edge belonging to one such subgraph carries a null vector and all edges belonging to its complement and having a vertex in common with it carry non zero vectors. For instance in the case of Figure \ref{fig:zero-vector}[left] there is one such subgraph and it consists of the edges $u,v,w$ and the vertices $V_1$, $V_2$, $V_3$ and $V_4$.
We conjecture that we may always choose the weights on reducible networks representing a given point so that all edge vectors are not null using the extra freedom in fixing the edge weights in reducible networks.

\begin{conjecture}\textbf{Elimination of null vectors on reducible plabic networks}\label{conj:null}
Let $({\mathcal N}, \mathcal O, \mathfrak l)$ be a reducible plabic network representing a given point $[A]\in\S \subset \GTNN$ for some irreducible positroid cell and such that it possesses a finite number of edges $e_1,\dots, e_s$ carrying null vectors,
$E_{e_l} = 0$, $l\in [s]$, and such that through each such edge there exists a path from some boundary source to some boundary sink. Then using the gauge freedom for unreduced graphs of Remark \ref{rem:gauge_freedom}, we may always change the weights on $\mathcal N$ so that the resulting network $({\tilde {\mathcal N}}, \mathcal O, \mathfrak l)$ still represents $[A]$ and the edge vectors ${\tilde E}_e \not =0$, for all $e\in {\tilde {\mathcal N}}$.
\end{conjecture}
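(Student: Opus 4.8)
The plan is to reduce the statement to a genericity argument on the fiber of weight assignments representing the fixed point $[A]$. For the given combinatorial network $\mathcal N$ with fixed orientation $\mathcal O$ and gauge direction $\mathfrak l$, Theorem \ref{theo:null} expresses each component as a rational function $(E_e)_j = N_{e,j}(w)/D(w)$, where the denominator $D(w)=\sum_{C\in {\mathcal C}(\mathcal G)} w(C)$ is strictly positive on the positive orthant and the numerator $N_{e,j}(w)=\sum_{F\in {\mathcal F}_{e,b_j}(\mathcal G)} (-1)^{\mbox{wind}(F)+\mbox{int}(F)} w(F)$ is a signed polynomial in the weights. Hence the null locus of $e_l$ is the common zero set $Z_{e_l}=\{w>0 : N_{e_l,j}(w)=0\ \forall j\in\bar I\}$, an algebraic subvariety of the positive weight space. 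Since acyclically oriented networks carry no null vectors by Theorem \ref{thm:null_acyclic}, the phenomenon is confined to genuinely cyclic reducible networks. Because $\mathcal N$ is reducible, the set $W_{[A]}$ of positive weight systems representing $[A]$, considered modulo the vertex gauge (\ref{eq:gauge}), has strictly positive dimension; the extra directions are exactly the unreduced graph gauge freedom of Remark \ref{rem:gauge_freedom}. The goal is then to produce a weight system in $W_{[A]}\setminus \bigcup_{l=1}^s Z_{e_l}$.

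First I would establish that the Zariski closure of $W_{[A]}$ is irreducible of positive dimension and that each $(E_e)_j$ restricts to a well-defined rational function on it. Then, since there are only finitely many null edges $e_1,\dots,e_s$, it suffices to show that for each $l$ the intersection $Z_{e_l}\cap W_{[A]}$ is a \emph{proper} subvariety: a finite union of proper subvarieties cannot cover an irreducible positive-dimensional set, so the complement contains a dense open subset from which the desired weights can be chosen. Thus the whole problem reduces to the single assertion that, for each null edge $e_l$, there exists \emph{at least one} weight system $w^{(l)}\in W_{[A]}$ with $E_{e_l}(w^{(l)})\neq 0$.

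To construct such a $w^{(l)}$ I would localize at the reducible structure responsible for the cancellation. By the discussion following Corollary \ref{cor:Le_net}, the edges carrying null vectors organize into maximal connected null subgraphs, and by Remark \ref{rem:gauge_freedom} such subgraphs arise only through the inverse of reductions (R1)--(R3), i.e. creation of parallel edges, dipoles, or leaves. Each such un-reduction introduces a gauge parameter $t>0$ which may be varied while keeping $[A]$ fixed and which scales distinct edge flows through $e_l$ by distinct powers of $t$. Concretely, as in Example \ref{example:null} and Figure \ref{fig:zero-vector}, the null value $N_{e_l,j}=0$ records an exact cancellation between a positively signed family ${\mathcal F}^{+}$ and a negatively signed family ${\mathcal F}^{-}$ of edge flows; the extra parameter $t$ enters $w({\mathcal F}^{+})$ and $w({\mathcal F}^{-})$ inhomogeneously, so that $N_{e_l,j}(t)$ becomes a non-constant rational function of $t$ and is therefore nonzero for all but finitely many $t$. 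Choosing such a $t$ yields the required $w^{(l)}$ and completes the proper-containment step.

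The main obstacle is precisely the claim that the extra gauge parameter enters the numerator \emph{non-trivially}, that is, that the cancellation producing the null vector is an accident of the weights rather than a combinatorial identity forced by the subgraph for \emph{every} admissible weight system. Should some $N_{e_l,j}$ be annihilated uniformly along the whole gauge fiber, no choice of $t$ would help and one would be forced to deform the combinatorial type as well. Ruling this out in full generality requires a combinatorial classification of maximal null subgraphs together with a proof that each admits a gauge deformation along which some $N_{e_l,j}$ is non-constant; establishing this classification is the substantive content that separates the conjecture from a theorem, and it is the part I expect to occupy the bulk of the argument.
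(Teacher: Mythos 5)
You were asked to prove Conjecture \ref{conj:null}, and the first thing to note is that the paper itself does not prove it: the statement is left as a conjecture, verified only on the single example of Figure \ref{fig:zero-vector}, and the paper explicitly postpones the analysis of the mechanism of creation of null edge vectors to a future publication. So your proposal cannot be checked against a proof in the paper; it has to stand on its own, and it does not close the statement.

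Your reduction via Theorem \ref{theo:null} --- each component $(E_e)_j$ is a ratio $N_{e,j}(w)/D(w)$ with $D>0$ on positive weights, so the null loci $Z_e$ are algebraic; the fiber $W_{[A]}$ of weights representing $[A]$ is positive-dimensional for a reducible network; a finite union of proper subvarieties cannot cover an irreducible variety --- is a sensible skeleton, but every load-bearing step is left open. First, the irreducibility of the Zariski closure of $W_{[A]}$ is asserted, not proved, and nothing in the paper supplies it. Second, your union $\bigcup_{l=1}^s Z_{e_l}$ is taken only over the initially null edges; at the deformed weights other edges can become null (in Example \ref{example:null} the edge $u$ is null precisely on the locus $p=q$ of the fiber and non-null elsewhere, so nullity of a fixed edge genuinely switches along $W_{[A]}$), hence the union must run over all edges admitting a path to the boundary --- this is repairable, since for initially non-null edges properness of $Z_e\cap W_{[A]}$ is witnessed by the original weights, but as written the argument proves less than the conjecture claims. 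Third, and decisively, the properness of $Z_{e_l}\cap W_{[A]}$ for the null edges --- the existence of even one representing weight system with $E_{e_l}\neq 0$ --- is exactly the content of the conjecture, and you do not establish it: your claim that the unreduced-gauge parameter $t$ enters the numerator inhomogeneously is only checked in the spirit of Example \ref{example:null}, and your supporting assertion that maximal null subgraphs arise only through inverses of the reductions (R1)--(R3) misreads Remark \ref{rem:gauge_freedom}, which says that the \emph{extra gauge freedom} arises from creation of parallel edges and leaves, not that null subgraphs have that form. You concede this hole in your final paragraph; with it open, the proposal is a reduction of the conjecture to an equivalent unproven statement --- a reasonable plan of attack, but not a proof.
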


The example in Figure \ref{fig:zero-vector} satisfies the conjecture. Both networks represent the same point 
$[ 2p/(1+p+q),1] \in Gr^{\mbox{\tiny TP}}(1,2)$, but for the second network all edge vectors are different from zero. Indeed for any choice of $s>0$, using (\ref{eq:tal_formula}), we get:
$$
E_{\tilde w}=-E_{\tilde u}=-E_{\tilde v}=\left(\frac{1+p}{1+p+q},0\right).
$$

\appendix

\section{Consistency of the system $\hat E_e$ at internal vertices}\label{app:orient}

In this Section we complete the proof of Lemmas~\ref{lemma:path} and \ref{lemma:cycle}.

Throughout this Appendix we use the same notations as in Section \ref{sec:orient}. In particular $\mathcal P_0$ is the simple path changing orientation and directed from the boundary source $b_{i_0}$ to the boundary sink $b_{j_0}$ in the initial orientation, and $\mathcal Q_0$ is the simple cycle changing orientation. 

We have to check that the system of vectors ${\hat E}_e$ defined in (\ref{eq:hat_E_P}) satisfy the linear relations at all internal vertices $V$ in the new orientation. We have to distinguish two cases:
\begin{enumerate}
\item $V$ does not belong to the path $\mathcal P_0$ (cycle $\mathcal Q_0$ respectively);
\item $V$ belongs to the path $\mathcal P_0$ (cycle $\mathcal Q_0$ respectively). 
\end{enumerate}

 Denote $\mbox{int}(e)$ and $\widehat{\mbox{int}}(e)$ the intersection number for $e$ before and after the change of orientation respectively.

Let us prove Lemmas~\ref{lemma:path},~\ref{lemma:cycle} in the first case. In this case all vectors incident to $V$ preserve the orientation, but the intersections may change. It is easy to check that ${\hat E}_e$ defined in (\ref{eq:hat_E_P}) solve the linear system at $V$ for the transformed network iff for each pair $f,g$ where $f$ is an incoming edge and $g$ is an outcoming one at $V$, one has 
\begin{equation}
\label{eq:int_vert_eq}
\mbox{int}(f)-\widehat{\mbox{int}}(f) = \gamma(f) - \gamma(g)\ \ \  (\!\!\!\!\!\mod 2).
\end{equation}
Indeed, if the orientation changes along a cycle $\mathcal Q_0$, then $\mbox{int}(f)=\widehat{\mbox{int}}(f)$ and the starting and the ending point of $f$ belong to the same region as $V$, therefore $\gamma(f) =\gamma(g)$; If the orientation changes along a path $\mathcal P_0$, $\mbox{int}(f)-\widehat{\mbox{int}}(f)$ is equal to the number of intersection of $f$ with ${\mathfrak{l}}_{i_0}$, ${\mathfrak{l}}_{j_0}$ $(\!\!\!\!\!\mod 2)$. If this number is even, the starting and the ending points of $f$ lie in regions with the same indexes, and  $\gamma(f)=\gamma(g)$. Otherwise,  $\gamma(f)=1-\gamma(g)$. 
Therefore  (\ref{eq:int_vert_eq}) is fulfilled, and the proof for vertices not belonging to $\mathcal P_0$ ($\mathcal Q_0$ respectively) is completed.

Let us prove Lemmas~\ref{lemma:path},~\ref{lemma:cycle} for $V$  belonging to $\mathcal P_0$ or $\mathcal Q_0$.  
\begin{figure}
  \centering
  {\includegraphics[width=0.49\textwidth]{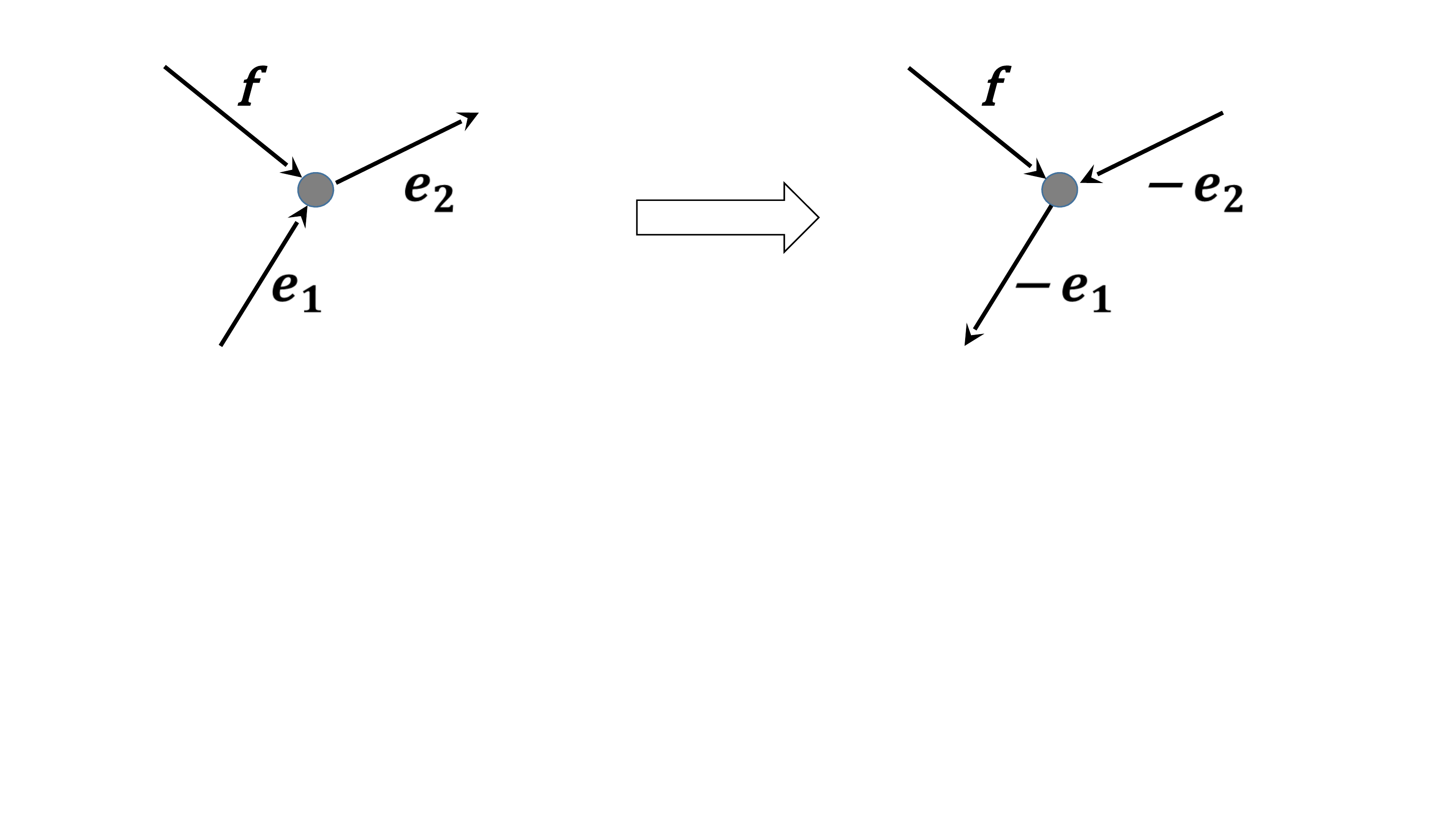}}
	\hfill
	{\includegraphics[width=0.49\textwidth]{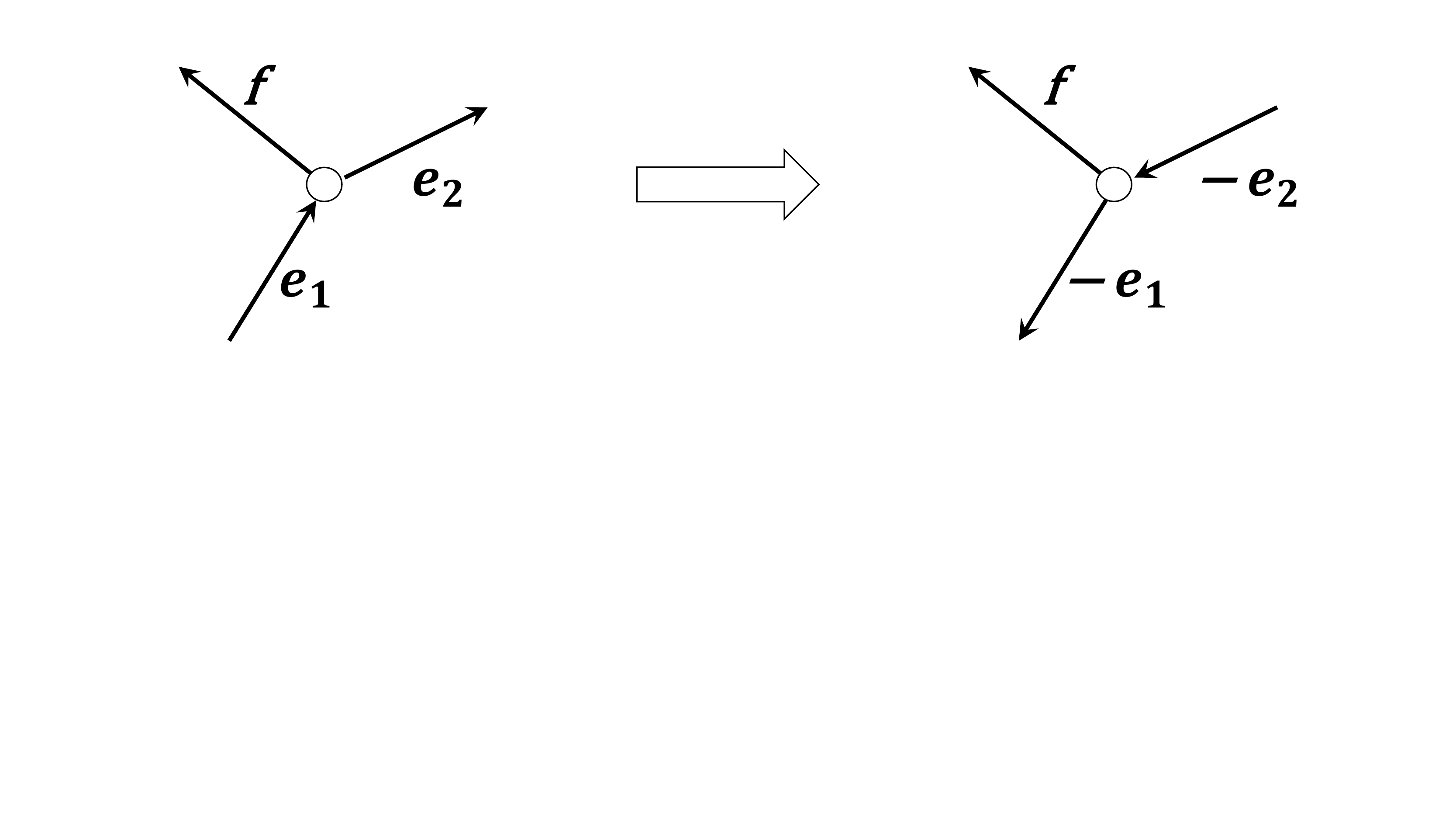}}
	\vspace{-2.3 truecm}
  \caption{Configurations at black [left] and white [right] vertices when $e_1,e_2$ belong to $\mathcal P_0$ or to $\mathcal Q_0$.}
	\label{fig:config_triv}
\end{figure}

Equations at a white $V_w$ before and after the change of variables are:
\begin{align}
\label{eq:path_w_vert_eq1}
 w_1^{-1} \tilde E_{e_1} &= (-1)^{\mbox{int}(e_1)+ \mbox{wind}(e_1,e_2)} \tilde E_{e_2} +
                   (-1)^{\mbox{int}(e_1)+ \mbox{wind}(e_1,f)} \tilde E_{f}, \\
\label{eq:path_w_vert_eq2}  
 w_2 \hat  E_{-e_2} &= (-1)^{\widehat{\mbox{int}}(e_2)+ \mbox{wind}(-e_2,-e_1)} \hat E_{-e_1}+
   (-1)^{\widehat{\mbox{int}}(e_2)+ \mbox{wind}(-e_2,f)} \hat E_{f}.                
\end{align}
Equations at a black $V_w$ before and after the change of variables are:
\begin{align}
\label{eq:path_b_vert_eq1}
 &(-1)^{\mbox{int}(e_1)+ \mbox{wind}(e_1,e_2)}  w_1^{-1} \tilde E_{e_1} &=&  &(-1)^{\mbox{int}(f)+ \mbox{wind}(f,e_2)}  w_f^{-1} \tilde E_{f}&  &=&   &\tilde E_{e_2},\\
\label{eq:path_b_vert_eq2}  
 &(-1)^{\widehat{\mbox{int}}(e_2)+ \mbox{wind}(-e_2,-e_1)}  w_2 \hat  E_{-e_2} &=&    &(-1)^{\widehat{\mbox{int}}(f)+ \mbox{wind}(f,-e_1)}  w_f^{-1} \tilde E_{f}&    &=&     &\hat E_{-e_1}.                
\end{align}
Taking into account that
\begin{equation}\label{eq:hat_E_P_bis}
{\hat E}_e = \left\{ \begin{array}{ll}
 (-1)^{\gamma(e)} {\tilde E}_e, & \mbox{ if } e\not \in \mathcal P_0 \ \ \mbox{or} \ \  \mathcal Q_0 , \mbox{ with } \gamma(e) \mbox{ as in (\ref{eq:eps_not_path})},\\
\displaystyle \frac{(-1)^{\gamma(e)}}{w_e} {\tilde E}_e, & \mbox{ if } e\in \mathcal P_0 \ \ \mbox{or} \ \  \mathcal Q_0 , \mbox{ with } \gamma(e) \mbox{ as in (\ref{eq:eps_on_path})},
\end{array}\right.
\end{equation}
where  $\tilde E =E$ if orientation changes along  $\mathcal Q_0$, the statement of Lemmas~\ref{lemma:path},~\ref{lemma:cycle} is equivalent to the following identities:
\begin{align}
  \label{eq:path_b_vert_eq3}
  \mbox{int}(e_1)+ \mbox{wind}(e_1,e_2)+ \widehat{\mbox{int}}(e_2)+ \mbox{wind}(-e_2,-e_1) + \gamma(e_1) + \gamma(e_2) =0  \ \ (\!\!\!\!\!\mod 2), \\
   \label{eq:path_b_vert_eq4}    
  \mbox{int}(e_1)+ \mbox{wind}(e_1,f) + \mbox{wind}(-e_2,f) + \mbox{wind}(-e_2,-e_1) +\gamma(e_1) + \gamma(f) = 1  \ \ (\!\!\!\!\!\mod 2), \\
 \mbox{int}(e_1)+ \mbox{wind}(e_1,e_2) +  \mbox{int}(f)+ \mbox{wind}(f,e_2) + \widehat{\mbox{int}}(f)+ \mbox{wind}(f,-e_1) +\gamma(e_1) + \gamma(f) = 0  \ \ (\!\!\!\!\!\mod 2).
 \label{eq:path_w_vert_eq5}               
\end{align}
Using the definition of the index $\gamma()$ in (\ref{eq:eps_not_path}) and  (\ref{eq:eps_on_path}), the left-hand side of Equation (\ref{eq:path_b_vert_eq3}) can be rewritten as:
\begin{equation}
\mbox{int}(e_1)+ \mbox{wind}(e_1,e_2)+ \widehat{\mbox{int}}(e_2)+ \mbox{wind}(-e_2,-e_1) + \gamma(e_1) + \gamma(e_2) =
\end{equation}
$$
= [ \widehat{\mbox{int}}(e_2) +  \mbox{int}(e_2) + \gamma_1(e_1) + \gamma_1(e_2)]  + [ \mbox{wind}(e_1,e_2) + \mbox{wind}(-e_2,-e_1)+  \gamma_2(e_1) + \gamma_2(e_2) ].
$$
From (\ref{eq:int_vert_eq}) it follows that the first parentheses equals to 0 $(\!\!\!\!\!\mod 2)$, therefore it is sufficient to check that 
\begin{equation}
\label{eq:a10}  
[ \mbox{wind}(e_1,e_2) + \mbox{wind}(-e_2,-e_1)+  \gamma_2(e_1) + \gamma_2(e_2) ] = 0  \ \ (\!\!\!\!\!\mod 2).
\end{equation}
Indeed, if $e_1$ and $e_2$ belong to the same half-plane with respect to $\mathfrak l$, then $ \gamma_2(e_1) + \gamma_2(e_2)=0 \ \ (\!\!\!\!\!\mod 2)$ and  $\mbox{wind}(e_1,e_2) = \mbox{wind}(-e_2,-e_1)=0$. If they belong to opposite half-planes, then  $ \gamma_2(e_1) + \gamma_2(e_2)=1$, one of the windings  $\mbox{wind}(e_1,e_2)$, $\mbox{wind}(-e_2,-e_1)=0$ equals to $\pm1$ and the other is zero (see Fig~\ref{fig:a10}). It proves (\ref{eq:a10})
\begin{figure}
  \centering
  \includegraphics[width=0.3\textwidth]{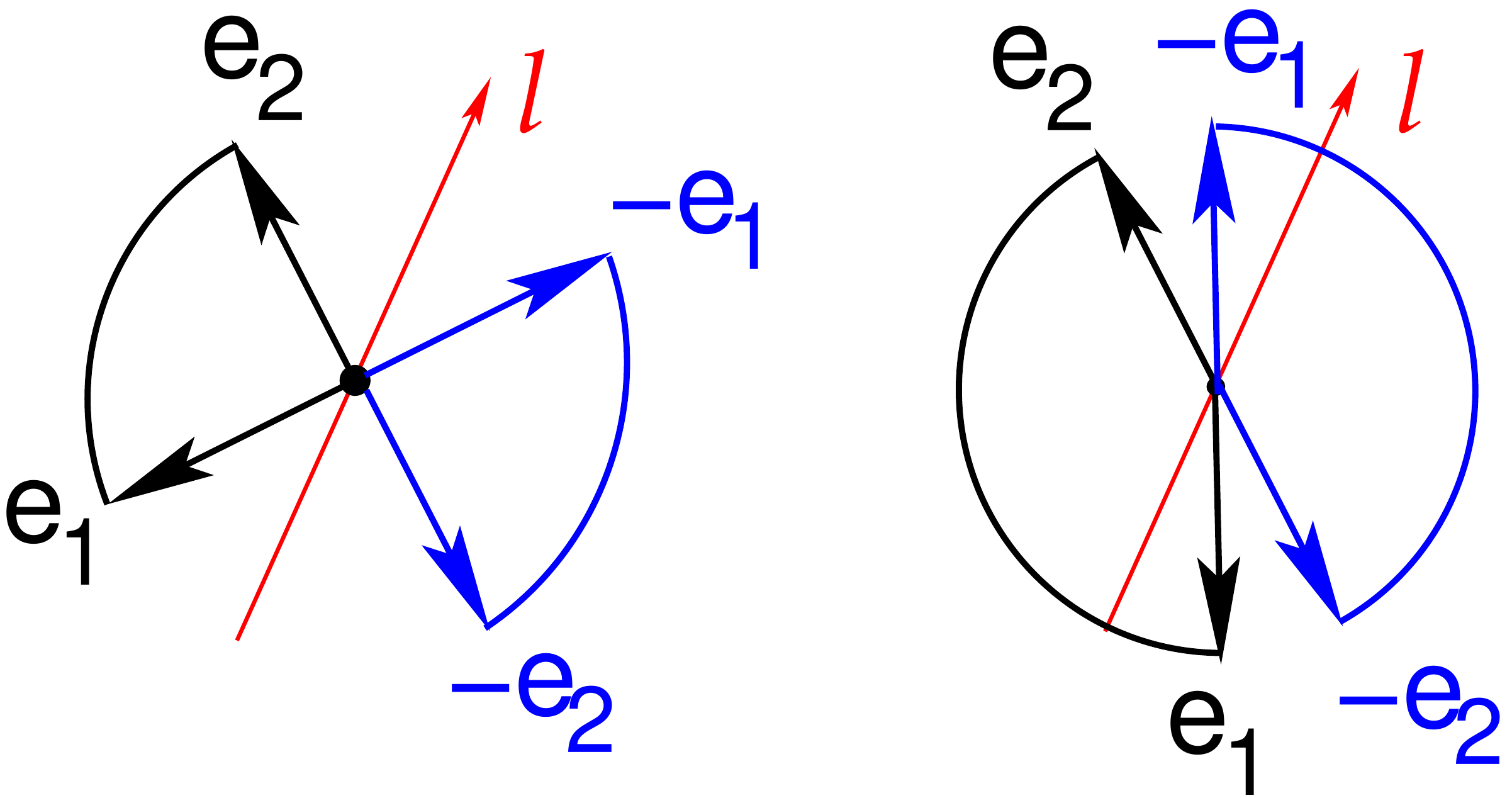}
  \caption{On the left $e_1$ and $e_2$ belong to the same half-plane, $\gamma_2(e_1)= \gamma_2(e_2)=1$,  $\gamma_2(-e_1)= \gamma_2(-e_2)=0$,  $\mbox{wind}(e_1,e_2)=\mbox{wind}(-e_2,-e_1)=0$. On the right $e_1$ and $e_2$ belong to opposite half-planes, $\gamma_2(e_1)= \gamma_2(-e_2)=0$,  $\gamma_2(-e_1)= \gamma_2(e_2)=1$, $\mbox{wind}(e_1,e_2)=0$, $\mbox{wind}(-e_2,-e_1)=1$.}
	\label{fig:a10}
\end{figure}

Similarly, the left-hand sides of Equations (\ref{eq:path_b_vert_eq4}), (\ref{eq:path_w_vert_eq5})  can be rewritten as:
\begin{equation}
\mbox{int}(e_1)+ \mbox{wind}(e_1,f) + \mbox{wind}(-e_2,f) + \mbox{wind}(-e_2,-e_1) +\gamma(e_1) + \gamma(f) = 
\end{equation}
$$
= [ \mbox{wind}(e_1,f) + \mbox{wind}(-e_2,f) + \mbox{wind}(-e_2,-e_1) +\gamma_2(e_1)] +  [\gamma_1(e_1) +\gamma(f) ],
$$
\begin{equation}
\mbox{int}(e_1)+ \mbox{wind}(e_1,e_2) +  \mbox{int}(f)+ \mbox{wind}(f,e_2) + \widehat{\mbox{int}}(f)+ \mbox{wind}(f,-e_1) +\gamma(e_1) + \gamma(f)=
\end{equation}
$$
= [\mbox{wind}(e_1,e_2) + \mbox{wind}(f,e_2) +  \mbox{wind}(f,-e_1) +\gamma_2 (e_1)] +
[\mbox{int}(f) + \widehat{\mbox{int}}(f) + \gamma_1(e_1) + \gamma(f)].
$$

To complete the proof, we use the cyclic order:

\begin{definition}\textbf{Cyclic order.}\label{def:cyclic_order} Generic triples of vectors in the plane have natural cyclic order. We write $[f,g,h]=0$ if the triple $f$, $g$, $h$ is ordered counterclockwise, and $[f,g,h]=1$ if the triple $f$, $g$, $h$ is ordered clockwise (see Fig~\ref{fig:cyclic_order}).
\end{definition}  
\begin{figure}
  \centering
  \includegraphics[width=0.25\textwidth]{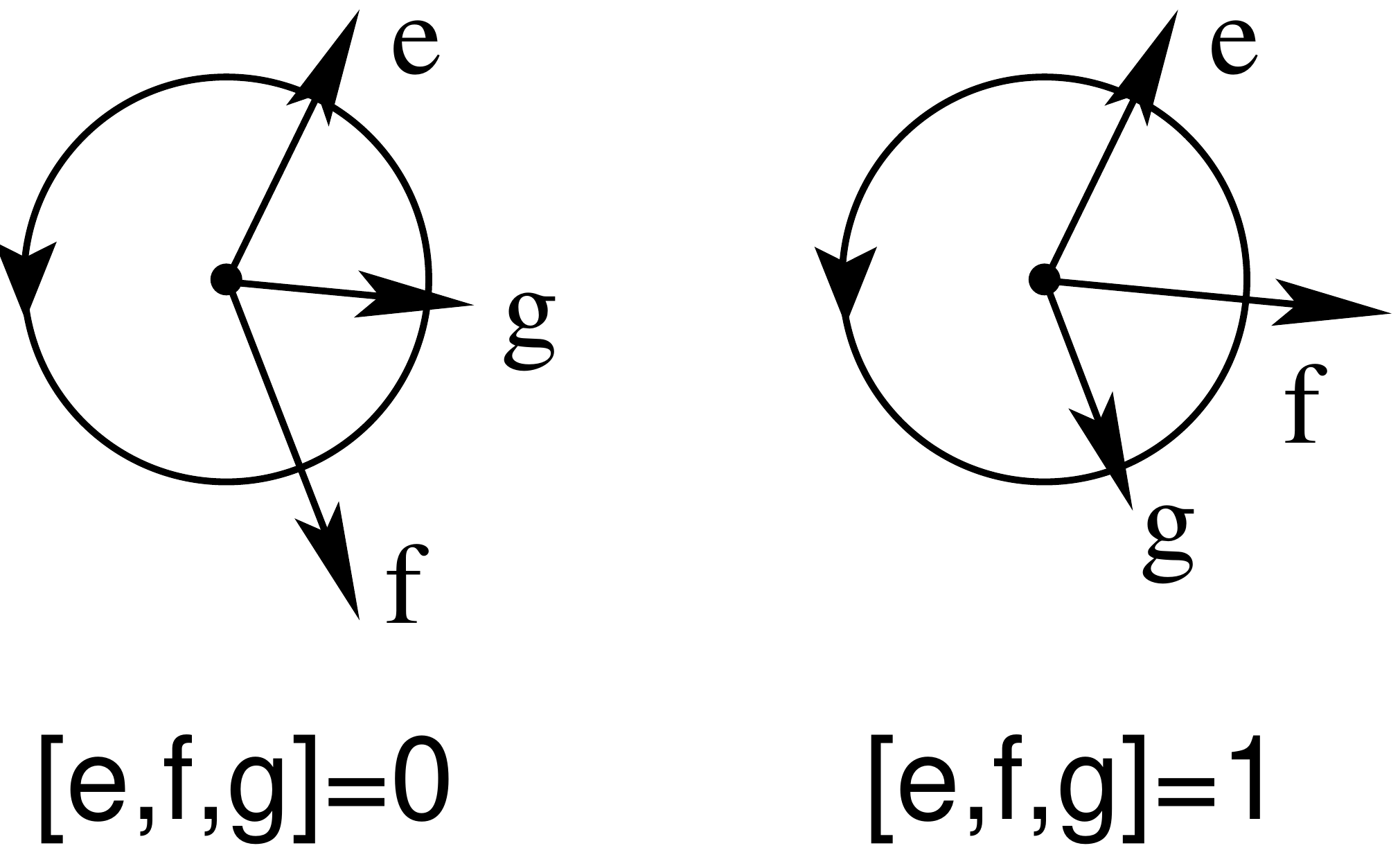}
  \caption{Cyclic order on triples of vectors. By definition $[e,f,g]=[f,g,e]=[g,e,f]=1-[e,g,f]=1-[g,f,e]=1-[f,e,g]$.}
	\label{fig:cyclic_order}
\end{figure}

Then the proof of  Lemmas~\ref{lemma:path},~\ref{lemma:cycle} in the second case immediately follows from Lemma~\ref{lemma:equiv_rel}.

\begin{lemma}
  \label{lemma:equiv_rel}
  Let $V$ belong to $\mathcal P_0$ or $\mathcal Q_0$, $e_1,e_2,f$ be the edges at $V$, where $e_1$, $e_2$ belong to  $\mathcal P_0$ ($\mathcal Q_0$),  and $e_1$ ($e_2$) denotes an incoming (outgoing) edge respectively in the initial configuration (see Figure \ref{fig:config_triv}). 
\begin{enumerate}
\item If $V$ is black, then:
  \begin{align}
 \label{eq:black1}   
\widehat{\mbox{int}}(f) + \mbox{int}(f) + \gamma(f) +\gamma_1(e_1) &= [e_1,-e_2,f] \quad
                                                                         (\!\!\!\!\!\!\mod 2),\\
 \label{eq:black2}      
 \mbox{wind}(e_1,e_2) + \mbox{wind}(f,e_2) +  \mbox{wind}(f,-e_1) +\gamma_2 (e_1) &=  [e_1,-e_2,f] \quad
(\!\!\!\!\!\!\mod 2). 
\end{align}
\item If $V$ is white, then:
  \begin{align}
   \label{eq:white1}    
\gamma(f) +\gamma_1(e_1) &= [e_1,-e_2,-f] \quad
                               (\!\!\!\!\!\!\mod 2),\\
  \label{eq:white2}     
  \mbox{wind}(e_1,f) + \mbox{wind}(-e_2,f) + \mbox{wind}(-e_2,-e_1) +\gamma_2(e_1)&= 1-[e_1,-e_2,-f] \quad
(\!\!\!\!\!\!\mod 2). 
\end{align}
\end{enumerate}
\end{lemma}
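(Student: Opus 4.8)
The plan is to prove all four identities as \emph{local} statements at the vertex $V$, since every term is, or can be reduced to, a function of the edges meeting at $V$ and of the gauge direction $\mathfrak l$ near $V$. The four identities split into two families: the ``mark'' identities (\ref{eq:black1}) and (\ref{eq:white1}), built from the region indices $\gamma,\gamma_1$ and the intersection numbers, and the ``winding'' identities (\ref{eq:black2}) and (\ref{eq:white2}), built from $\mbox{wind}$ and $\gamma_2$. I would treat the two families by different devices, exploiting throughout the central symmetry $[a,b,c]=[-a,-b,-c]$ of the cyclic order (Definition~\ref{def:cyclic_order}) to rewrite the right-hand sides in terms of the rays actually emanating from $V$: in the black case the edges leave $V$ along $-e_1,e_2,-f$ and $[e_1,-e_2,f]=[-e_1,e_2,-f]$, while in the white case they leave along $-e_1,e_2,f$ and $[e_1,-e_2,-f]=[-e_1,e_2,f]$. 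Thus each right-hand side is simply the orientation (counterclockwise $=0$, clockwise $=1$) of the triple of outgoing edge-rays at $V$.

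For the mark identities I would first localize the nonlocal term $\gamma(f)$. In the white case $f$ is outgoing, so its starting vertex is $V\in\mathcal P_0$ and the infinitesimal shift prescribed in (\ref{eq:eps_not_path})--(\ref{eq:eps_on_path}) already places the test point in a sector adjacent to $V$; hence $\gamma(f)$ is local and (\ref{eq:white1}) follows directly. In the black case $f$ is incoming with far endpoint $U_f$, so I would move along $f$ from $U_f$ to $V$ and record that the $\pm$ mark flips exactly at each crossing with a dividing curve; by planarity the edge $f$ meets $\mathcal P_0$ only at $V$, so the only interior crossings are with the rays $\mathfrak l_{i_0}$ and $\mathfrak l_{j_0}$. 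Since changing orientation removes $\mathfrak l_{i_0}$ and adds $\mathfrak l_{j_0}$, the parity of this crossing count is exactly $\widehat{\mbox{int}}(f)+\mbox{int}(f)\pmod 2$, so these two terms cancel $\gamma(f)$ down to the local mark $m_V(f)$ of the sector at $V$ reached along $-f$. Both (\ref{eq:black1}) and (\ref{eq:white1}) then reduce to the purely local claim that $m_V(f)+\gamma_1(e_1)$ equals the orientation of the edge-rays at $V$, which I would verify by a short case check using that the path enters along $-e_1$ and leaves along $e_2$ and that crossing the oriented path reverses the mark.

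For the winding identities I would first show that each left-hand side is independent of $\mathfrak l$ modulo $2$. As $\mathfrak l$ rotates, Lemma~\ref{lem:rotation} says each $\mbox{wind}$ term jumps by $\pm1$ only when $\mathfrak l$ passes one of its two arguments, while $\gamma_2(e_1)=\tfrac{1-s(e_1,\mathfrak l)}{2}$ flips when $\mathfrak l$ passes $\pm e_1$. Tallying the triggering directions shows that at every critical position exactly two of the four terms change, so the sum is constant $\pmod 2$: in (\ref{eq:black2}), passing $e_1$ moves $\mbox{wind}(e_1,e_2)$ and flips $\gamma_2(e_1)$, passing $-e_1$ moves $\mbox{wind}(f,-e_1)$ and flips $\gamma_2(e_1)$, passing $e_2$ moves $\mbox{wind}(e_1,e_2)$ and $\mbox{wind}(f,e_2)$, and passing $f$ moves $\mbox{wind}(f,e_2)$ and $\mbox{wind}(f,-e_1)$; the analogous pairing holds for (\ref{eq:white2}). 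Having fixed $\mathfrak l$-independence, I would evaluate the left-hand side for one convenient position of $\mathfrak l$ in each cyclic arrangement of the three rays, so that most windings vanish, and read off the orientation, matching the right-hand sides.

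The main obstacle I expect is the orientation and handedness bookkeeping rather than any analytic difficulty: pinning down which side of the directed path $\mathcal P_0$ carries the $+$ mark, which sector at $V$ is selected by $\gamma_1(e_1)$ (``left of the incoming $e_1$'') and by the localized $\gamma(f)$, and fixing the counterclockwise$=0$ convention consistently so that the local marks assemble into exactly $[e_1,-e_2,\pm f]$. This is the step where the configurations of Figure~\ref{fig:config_triv} must be read carefully, and where the black and white cases differ only by exchanging incoming and outgoing edges and replacing $f$ by $-f$; once the conventions are fixed, each identity is a finite, routine case verification.
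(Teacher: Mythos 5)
Your proposal is correct, and it splits the lemma exactly as the paper does: the ``mark'' identities (\ref{eq:black1}), (\ref{eq:white1}) via region markings, and the ``winding'' identities (\ref{eq:black2}), (\ref{eq:white2}) via a stability argument based on Lemma \ref{lem:rotation}. For (\ref{eq:black1}) and (\ref{eq:white1}) your argument is essentially the paper's: the paper observes that $\widehat{\mbox{int}}(f)+\mbox{int}(f)=0$ precisely when the two endpoints of $f$ lie in equally marked regions, and that $\gamma(f)+\gamma_1(e_1)=0$ precisely when the start of $f$ is marked like the region to the left of the chain $e_1,e_2$ --- which is exactly your ``walk along $f$ and record mark flips'' localization; your explicit use of the central symmetry $[a,b,c]=[-a,-b,-c]$ to read the right-hand sides as the orientation of the three rays leaving $V$ is a clean reformulation that the paper leaves implicit. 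Where you genuinely diverge is in completing (\ref{eq:black2}), (\ref{eq:white2}): the paper verifies a single configuration (Figure \ref{fig:bw_winding}) and then deforms not only $\mathfrak{l}$ but also the edges $f$ and $e_2$, checking via Lemma \ref{lem:rotation} that both sides jump together, so no enumeration of configurations is needed; you instead prove invariance in $\mathfrak{l}$ only (your pairing of critical directions is precisely the paper's first deformation step, and your tally is correct) and then evaluate each edge configuration with a conveniently anchored $\mathfrak{l}$. Both routes are valid: the paper trades a one-example verification for three deformation checks, while yours trades purely static sign computations for a case enumeration. One caveat on that enumeration: with $\mathfrak{l}$ anchored, say, just counterclockwise of $-e_1$, the surviving terms $\mbox{wind}(f,e_2)$, $\mbox{wind}(f,-e_1)$, etc.\ are determined not by the cyclic arrangement of the three rays alone but by the cyclic arrangement of all six directions $\pm e_1,\pm e_2,\pm f$, so there are more cases per color than your phrasing suggests; the check remains finite and each case is a pure sign computation, so this is an understatement rather than a gap.
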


\begin{proof}
  In the proof all identities hold mod 2.
  
  To prove (\ref{eq:black1}), let us remark that $\widehat{\mbox{int}}(f) + \mbox{int}(f)=0 $ if and only if
  both the starting and the ending points of $f$ lie in regions equally marked. Moreover
  $\gamma(f) +\gamma_1(e_1)=0$ if and only if the starting point of $f$ lies in area with the same marking as the area at the left of the ending point of $e_1$. Therefore  $\widehat{\mbox{int}}(f) + \mbox{int}(f)+\gamma(f) +\gamma_1(e_1)=0$ if vector $f$ lies to the left of the directed chain $e_1,e_2$, otherwise this expression equals 1. But vector $f$ lies to the left (right) of the directed chain $e_1,e_2$ if and only if $[e_1,-e_2,f]=0$ (1 respectively).

  To prove (\ref{eq:white1}), let us remark that $\gamma(f) +\gamma_1(e_1)=0$ if and only if the starting point of $f$ lies in area at the left of the ending point of $e_1$.  In this case vector $f$ lies to the left (right) of the directed chain $e_1,e_2$ if and only if $[e_1,-e_2,-f]=0$ (1 respectively).

It is evident that  (\ref{eq:black2}), (\ref{eq:white2})  are true for configurations presented at Figure~\ref{fig:bw_winding}:
  \begin{figure}
  \centering
  \includegraphics[width=0.8\textwidth]{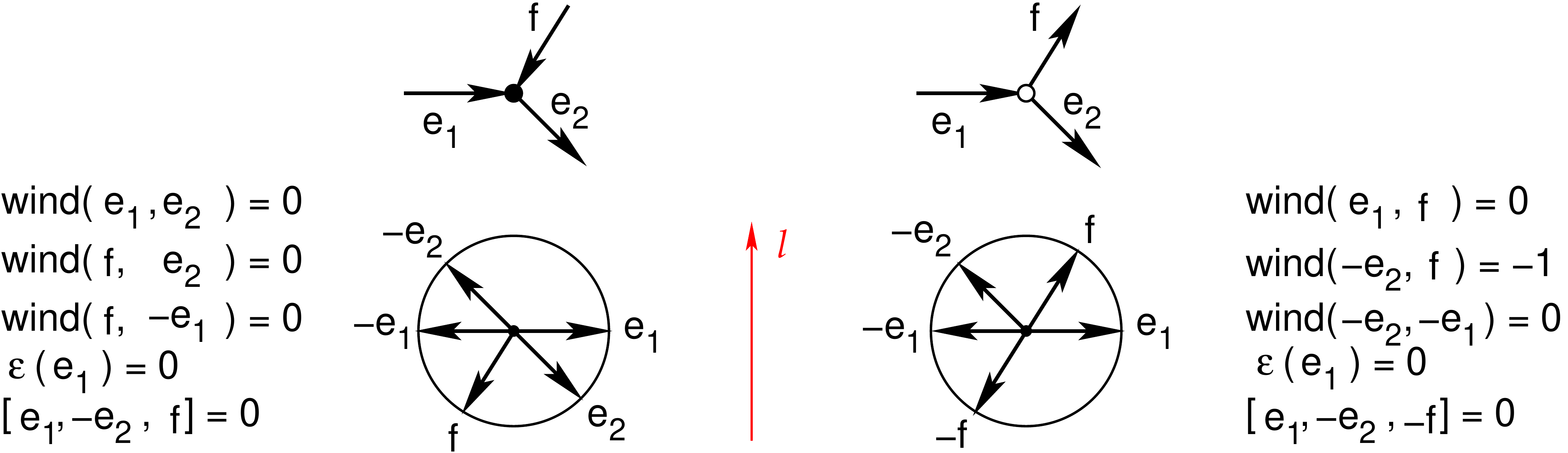}
  \caption{Check of  (\ref{eq:black2}), (\ref{eq:white2}) for one configuration.}
	\label{fig:bw_winding}
\end{figure}

Lemma~\ref{lem:rotation} implies that they hold true for all configurations and gauge ray directions:
  \begin{enumerate}
  \item The right-hand side of  (\ref{eq:black2}), (\ref{eq:white2}) does not depend on $\mathfrak l$. The terms in the left-hand side change when $\mathfrak l$ passes $f$, $e_2$, $e_1$, $-e_1$ ( $f$, $-e_2$, $e_1$, $-e_1$) if $V$ is black (white) respectively. If $\mathfrak l$ passes either $f$ or $e_2$ ($-e_2$),  then two winding terms change by 1 contemporarily. If $\mathfrak l$ passes either $e_1$ or $-e_1$, exactly one winding term and $\gamma_1(e_1)$ change. In all cases equality remains true;
  \item If we keep $\mathfrak l$ and all vectors except $f$ fixed, both the right-hand side and the left-hand side change by 1 when $f$ ($-f$) passes $e_1$, $-e_2$, and remain fixed in all other cases, therefore the equality remains true;
  \item  If we keep $\mathfrak l$ and all vectors except $e_2$ fixed, both the right-hand side and the left-hand side change by 1 when $e_2$ passes $-e_1$, $-f$ ($f$), and remain fixed in all other cases, therefore the equality remains true.
   \end{enumerate} 
\end{proof}

\section*{Acknowledgments}  

The authors would like to express their gratitude to T. Lam for pointing our attention to Reference \cite{AGPR}.

\end{document}